\normalfont\fontsize{11pt}{13pt}\selectfont}
\normalfont\fontsize{11pt}{13pt}\selectfont}
\newtheorem{thmm}{Theorem}[section]
\newtheorem{prop}{Proposition}[section]
\newtheorem{lem}{Lemma}[section]
\newtheorem{coro}{Corollary}[section]
\newtheorem{defin}{Definition}[section]
\newtheorem{step}{Step}
\newtheorem{step2}{Step}
\newcommand{\ve}{\varepsilon}
\newcommand{\dd}{{\rm d}}
\begin{document}

\title{\bf \Large Invariant distributions and scaling limits for some diffusions in time-varying random environments}

\author{\large \it Yoann Offret}

\date{}

\maketitle

\noindent
\rule{\linewidth}{.5pt}

\begin{center}
\scriptsize \sl Université de Neuchâtel, Institut de Mathématiques\\ 
Bâtiment UniMail, Rue Émile Argant 11\\
2000 Neuchâtel Suisse\\[5pt]
Ph.: +41 (0)32 718 28 07\\
Fax: +41 (0)32 718 28 01\\[5pt]
\href{mailto:yoann.offret@uninee.ch}{\nolinkurl{yoann.offret@unine.ch}}\\
\href{http://members.unine.ch/yoann.offret/}{http://members.unine.ch/yoann.offret/}
\end{center}

\noindent
\rule{\linewidth}{.5pt}

\vspace{2em}



\noindent
{\small {\bf Abstract}\; We consider a family of one-dimensional diffusions, in dynamical Wiener mediums, which are random perturbations of the Ornstein-Uhlenbeck diffusion process. We prove quenched and annealed convergences in distribution and under weigh-ted total variation norms. We find two kind of stationary probability measures, which are either the standard normal distribution or a quasi-invariant measure, depending on the environment, and which is naturally connected to a random dynamical system. We apply these results to the study of a model of time-inhomogeneous Brox's diffusions, which generalizes the diffusion studied by Brox (1986) and those investigated by Gradinaru and Offret (2011). We point out two distinct diffusive behaviours and we give the speed of convergences in the quenched situations.}\\

\noindent
{\small {\bf Key words}\; Time-dependent random environment .  Time-inhomogeneous Brox's diffusion .  Random dynamical system .  
Foster-Lyapunov drift condition .  Fluctuating stationary distribution}\\

\noindent
{\small {\bf Mathematics Subject Classification (2000)}\; 60K37 .  60J60 .  60F99 .  37H99 .  60H25 .  37B25}

\vspace{2em}

\noindent
\rule{\linewidth}{.5pt}

\tableofcontents
{}
\vspace{2em}

\noindent
\rule{\linewidth}{.5pt}


\section{Introduction}
\label{sec:1}
\setcounter{equation}{0}

Random walks (RWs) in random environments (REs) and their continuous-time counterparts, the diffusions in random environment, pave the way for the study of a multitude of interesting cases, which have been tackled since the $70{\mbox{\textquoteright}}{\rm s}$ in a large section of the literature. 

Concerning the genesis of the theory, we allude to \cite{KKS,Sin}, as regards the discrete-time situation, and to \cite{Schumi,Brox,KTT}, as regards the continuous-time one. For more recent refinements and generalizations, we refer to \cite{mathieu1,hushi,HSY,shi,schm,ZeiSz,chelio,DGPS,CGZ,diel}
  and for a general review of the topic, we refer to \cite{Zei1}.

Here we investigate one-dimensional diffusions evolving in  dynamical  Wiener media, which have some common features with those studied in \cite{Brox,MY}. We give, under weighted total variation norms, quenched and annealed diffusive scaling limits, which may depend on the environment, and thus, which are not always normal distributions. We also give the speeds of convergence under the  quenched distributions. In addition, we bring out a phase transition phenomenon, which is the analogue in RE, to a particular situation considered  in \cite{MY}. 

RWs in dynamical REs have been widely and intensively considered in the past few years under several assumptions. Initially, space-time i.i.d. REs have been introduced and studied in \cite{Boldri,Boldri3,Seppa}. 
Further difficulties arise when the fluctuations of the REs are i.i.d. in space and Markovian in time, case addressed in \cite{Zei2,Dolgo}, and major one arise when we consider space-time mixing REs, case recently studied in \cite{Dolgo1,Bric,Avena}.
However, continuous-time diffusions in time-varying  random environment have been sparsely investigate. Nevertheless, we can mention \cite{LOY,KO,KO2,Rhodes} concerning the homogenization of diffusions in time-dependent random flows.

\subsection{The Wiener space}
Introduce the space
\begin{equation}\label{env}
\Theta:=\Big\{\theta\in {\rm C}(\mathbb R;\mathbb R) : \theta(0)=0\;\mbox{and}\;\lim_{|x|\to\infty}{x^{-2}\,\theta(x)}=0\Big\}
\end{equation}
endowed by the standard $\sigma$-field $\mathcal B$ generated by the Borel cylinder sets. It is classical that there exists a unique probability measure $\mathcal W$ on $(\Theta,\mathcal B)$ such that the processes $\{\theta(\pm\,x) : \theta\in\Theta, x\geq 0\}$ are two independent standard Brownian motions. The probability distribution $\mathcal W$ is called the Wiener measure. We denote by $\{S_\lambda : \lambda>0\}$ the scaling transformations on $\Theta$ defined by 
\begin{equation}\label{scaling}
S_\lambda\theta(\ast) := \frac{\theta(\lambda \ast)}{\sqrt \lambda}.
\end{equation}
Note that $\Theta$ is naturally endowed with a structure of separable Banach space, such that $\mathcal B$ coincides with the Borel $\sigma$-field $\mathcal B_\Theta$.

\subsection{Schumacher and Brox's results}

Brox makes sense in \cite{Brox} to solution of the informal diffusion equation
\begin{equation}\label{brox}
\dd X_t = \dd B_t-\frac{1}{2}\theta^\prime(X_t)\, \dd t,
\end{equation}
where $\theta\in\Theta$ and $B$ is a standard one-dimensional Brownian motion independent of the Brownian environment $(\Theta,\mathcal B,\mathcal W)$. Denoting by $\mathds P_\theta$ and $\widehat {\mathds P}$, respectively the quenched and annealed distributions (the expectation of $\mathds P_\theta$ under $\mathcal W$) of such solution, Schumacher and Brox show, independently in \cite{Schumi0,Schumi} and \cite{Brox}, that there exists a family of 
 measurable functions $\{b_h : h>0\}$ on $(\Theta,\mathcal B)$ such that the following convergence holds in probability
\begin{equation}\label{brox1}
\frac{X_t}{(\log t)^2}-b_1\big(S_{(\log t)^2}\theta\big) = \frac{X_t - b_{\log t}(\theta)}{(\log t)^2} \xrightarrow[t\to\infty]{\widehat {\mathds P}} 0.
\end{equation}
The Wiener measure being invariant under the scaling transformations, if we denote by $\hat b_1$ the distribution of $b_1$ under $\mathcal W$, the following annealed convergence holds in distribution
\begin{equation}\label{brox2}
\frac{X_t}{(\log t)^2}\xrightarrow[t\to\infty]{(\dd)} \hat b_1.
\end{equation}
The key to prove these results is to take full advantage of the representation of $X$ in terms of a one-dimensional Brownian motion changed in scale and time, and of the invariance of the Brownian motions $B$ and $\theta$ under the scaling transformations $S_{\lambda}$. The authors prove that the diffusion is localized in the valleys of the potential $\theta$, which are themselves characterized by $b_1$. 

\subsection{Phase transition in a $2$-stable deterministic environment}

Set $W(x):=|x|^{{1}/{2}}$ and consider, for any $\beta\in\mathbb R$, the particular time-inhomogeneous singular stochastic differential equation (SDE) studied in \cite{MY} and which is given by
\begin{equation}\label{my}
\dd Y_t = \dd B_t-\frac{1}{2}\frac{W^\prime(Y_t)}{t^\beta}\, \dd t.
\end{equation}
The authors show in \cite{MY} the existence of a pathwise unique strong solution and prove  diffusive and subdiffusive scaling limits in distribution,  depending on the position of $\beta$ with respect to $1/4$. More precisely, they prove that 
\begin{equation}\label{my1}
\frac{Y_t}{\sqrt t}\xrightarrow[t\to\infty]{(\dd)} 
\left\{\begin{array}{l}
\mathcal N(0,1),\quad\mbox{when}\;\beta>1/4,\\
  \\
k_c^{-1}{e^{-\big[\frac{x^2}{2}+W(x)\big]}\,\dd x}, \quad \mbox{when}\;\beta=1/4,
\end{array}\right.
\end{equation}
and 
\begin{equation}\label{my2}
\frac{Y_t}{t^{2\beta}}\xrightarrow[t\to\infty]{(\dd)} k_u^{-1}{e^{-W(x)}\,\dd x},\quad\mbox{when}\; \beta<1/4,
\end{equation}
$k_c$ and $k_u$ being two normalization positive constants. In fact, to obtain the convergences in  $(\ref{my1})$, they study the diffusion equation 
\begin{equation}\label{diff_my_intro}
\dd Z_t= \dd B_t-\frac{1}{2}\left[Z_t+e^{-rt}\,W^\prime(Z_t)\right]\, \dd t.
\end{equation}
This process is naturally related to equation (\ref{my}) by setting $r:=\beta-1/4$, via a well chosen scaling transformation taking full advantage of the scaling property of the Brownian motion $B$ and of the deterministic scaling property of the potential $W$. For more details , we refer to \cite{MY}. We can expect to obtain similar results by replacing $W$ in equation (\ref{my}) by a typical Brownian path $\theta\in\Theta$, a $2$-stable random process, and this is one of the main objects of this article.

\subsection{Overview of the article}

The paper is organized as follows: in Section \ref{sec:2}, we introduce a diffusion equation (\ref{diff_intro}) in a dynamical Wiener potential, which generalizes equation (\ref{diff_my_intro}). Then we state our main results and we give the general strategy of the proofs. In section \ref{sec:3}, we apply these results to  a model of time-inhomogeneous Brox's diffusions. This is a generalization of equation (\ref{my}) and (\ref{brox}) and we obtain similar asymptotic behaviours as in (\ref{my1}). Thereafter, in Section \ref{sec:4}, we introduce some linear perturbations of equation (\ref{diff_intro}). We show some properties, related to these ones, which are used in Sections \ref{sec:5} and \ref{sec:6} to prove existence, uniqueness and nonexplosion for the diffusion process (\ref{diff_intro}) (Theorem \ref{existence_intro}) and also to prove that this process is a strongly Feller diffusion satisfying the lower local Aronson estimate and a kind of cocycle property (Theorem \ref{markov_feller_intro}). In Section \ref{sec:7}, we prove some technical results in order to obtain the quenched and annealed convergences (Theorems \ref{invariant_intro} and \ref{CG}) in the two last Sections.


\section{Model and statement of results}
\label{sec:2}
\setcounter{equation}{0}


\subsection{Diffusions in a fluctuating Ornstein-Uhlenbeck potential}

In the present paper, we study Brownian motions dynamics, in time-dependent Wiener media, given by the underlying dynamical random environment 
\begin{equation}\label{scaling_intro}
\left\{T_t\theta(x) := S_{e^{{t}/{2}}}\theta(x)=e^{-{t}/{4}}\,\theta(e^{{t}/{2}}x) : \theta\in\Theta,\,t,x\in\mathbb R\right\}.
\end{equation} 
The family $\{T_t : t\in \mathbb R \}$ is a one-parameter group of transformations leaving invariant $\mathcal W$ and such that, under this probability measure, $\{T_{t}\theta(x) : t\in\mathbb R\}$ is a stationary Ornstein-Uhlenbeck process having $\mathcal N(0,x)$ as stationary distribution. Moreover, the dynamical system $(\Theta,\mathcal B,\mathcal W, (T_t)_{t\in \mathbb R})$ is ergodic (see Proposition \ref{ergodicity}). 

We consider, for any $r\in\mathbb R$, the diffusion process $Z$, solution of the informal SDE driven by a standard Brownian motion $B$, independent of $(\Theta,\mathcal B,\mathcal W)$,
\begin{equation}\label{diff_intro}
\dd Z_t = \dd B_t-\frac{1}{2}\partial_x V_{\theta}(t,Z_t)\, \dd t,\quad  Z_s  =  z\in\mathbb R,\;t\geq s\geq 0,\;\theta\in\Theta,
\end{equation}
with
\begin{equation}\label{potential_intro}
V_{\theta}(t,x):=\frac{x^{2}}{2}+ e^{-r t}\,T_{t}\theta(x).
\end{equation}
Note that when $\theta$ is equal to $W$, defined in (\ref{my}), $T_t\theta$ in (\ref{potential_intro}) is simply equal to $\theta$ and equation (\ref{diff_intro}) is nothing but equation (\ref{diff_my_intro}). The diffusion process $Z$ can be seen as a Brownian motion immersed in the random time-varying potential $\{V_{\theta}(t,\cdot) : t\in\mathbb R\}$, as well as an Ornstein-Uhlenbeck diffusion process, whose potential is perturbed by the  dynamical Wiener medium $\{e^{-r t}\,T_t\theta : t\in\mathbb R\}$. 
Moreover, one can see $Z$ as a distorted Brownian motion, whose drift is a Gaussian field $\{\Gamma(t,x) : t,x\in\mathbb R\}$ having mean function $m_{\Gamma}$ and covariance function $C_{\Gamma}$ (here a Dirac measure) given by
\begin{equation*}
m_{\Gamma}(t,x)=-\frac{x}{2}\quad\mbox{and}\quad C_{\Gamma}(t,x;s,z) = \frac{1}{4}{e^{-\left[r(t+s)+\frac{|t-s|}{4}\right]}}\,\delta(e^{t/2}x-e^{s/2}z).
\end{equation*}

We need to give a correct sense to solution of equation (\ref{diff_intro}). Formally, we can see $Z$ as the diffusion process, whose conditional infinitesimal generator, given $\theta\in\Theta$, is
\begin{equation}\label{ig_intro}
L_{\theta} := L_{\theta,t} +\frac{\partial}{\partial t} := \left[\frac{1}{2} e^{V_{\theta}(t,x)} \frac{\partial}{\partial x} \left(e^{-V_{\theta}(t,x)} \frac{\partial}{\partial x}\right)\right]+\frac{\partial}{\partial t}.
\end{equation}
The domain and the socalled generalized domain of $L_\theta$ are defined by
\begin{multline}\label{domains_intro}
D(L_\theta):=\left\{F\in{\rm C}^{1} : e^{-V_{\theta}}\partial_x F \in {\rm C}^{1}\right\}\quad\mbox{and}\\
\overline D(L_\theta):=\left\{F\in {\rm W}^{1,\infty}_{\rm loc} : e^{-V_{\theta}}\partial_x F \in  {\rm W}^{1,\infty}_{\rm loc}\right\}
\end{multline}
where ${\rm C}^{1}$ and ${\rm W}^{1,\infty}_{\rm loc}$ denote the space of real continuous functions $F(t,x)$ on $[s,\infty)\times \mathbb R$ such that the partial derivatives $\partial_t F$ and $\partial_x F$ (in the sense of distributions) exist and are respectively continuous functions and locally bounded functions.  

This kind of diffusion operators, with distributional drift, have been already study in \cite{RussoTrutnau,FlanRussoWolf}  in the case where the coefficients of the SDE do not depend on time. Rigorously speaking, a weak solution to equation (\ref{diff_intro}) is a solution to the martingale problem related to $(L_\theta,D(L_\theta))$.

\begin{defin}\label{solution_intro}
A continuous stochastic process $\{Z_t : t\geq s\}$ defined on a given filtered probability space is said to be a weak solution to equation (\ref{diff_intro}) if $Z_s=z$ and if there exists an increasing sequence of stopping times $\{\tau_n : n\geq 0\}$ such that, for all $n\geq 0$ and $F\in D(L_{\theta})$,
\begin{equation}\label{MP_Diff}
F(t\wedge \tau_{n} ,Z_{t\wedge \tau_n}) -\int_s^{t\wedge\tau_n} L_\theta F(u,Z_u)\,\dd u,\quad t\geq s,
\end{equation}
is a local martingale, with
\begin{equation}
\tau_{e}:= \sup_{n\geq 0}\inf\{t\geq s : |Z_t|\geq n\}=\sup_{n\geq 0}\tau_{n}.
\end{equation}
A weak solution is global when the explosion time satisfies $\tau_e=\infty$ a.s. and we said that the weak solution is unique if all the weak solutions have the same distribution.
\end{defin}

We are now able to state our first result.

\begin{thmm}\label{existence_intro} For any $r\in\mathbb R$, $\theta\in\Theta$, $s\geq 0$ and $z\in\mathbb R$, there exists a unique global weak solution $Z$ to equation (\ref{diff_intro}). Moreover, there exists a standard Brownian motion $B$ such that, for all $F\in \overline D(L_\theta)$,
\begin{equation}\label{ito_diff_intro}
F(t ,Z_t) = F(s,z)+\int_s^{t} L_\theta F(u,Z_u)\,\dd u +\int_s^{t}\partial_x F(u,Z_u)\,\dd B_u,\quad t\geq s.\\
\end{equation}
\end{thmm}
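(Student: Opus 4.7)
The plan is to realize $Z$ as the unique solution to the martingale problem for $(L_\theta, D(L_\theta))$, bypassing the distributional nature of $\partial_x V_\theta$ by always working with the product $e^{-V_\theta}\partial_x F$, which is $C^1$ for $F\in D(L_\theta)$ by definition of the domain. Writing $V_\theta(t,x) = \tfrac12 x^2 + e^{-rt} T_t\theta(x)$, I first note that $T_t\theta$ is jointly continuous in $(t,x)$ (it is defined by evaluating $\theta$ at scaled points), so $V_\theta$ and $e^{\pm V_\theta}$ are continuous and locally bounded on $[s,T]\times\mathbb{R}$. This makes $L_\theta F$ a well-defined classical function whenever $F\in D(L_\theta)$.

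For existence, I localize by fixing $n\geq 1$ and working up to $\tau_n$, and mollify $\theta$ into smooth approximations $\theta^m$ converging uniformly on compacts. For each $m$, the SDE has classical coefficients and admits a unique strong solution $Z^{n,m}$. The uniform local boundedness of $V_{\theta^m}$ on $[s,T]\times[-n,n]$ yields tightness of $\{Z^{n,m}\}_m$ via standard Kolmogorov/Aldous estimates. Any weak limit solves the martingale problem for $(L_\theta, D(L_\theta))$ up to $\tau_n$, and consistency across $n$ produces a solution up to $\tau_e$. For uniqueness I would follow the Stroock--Varadhan scheme: one can prescribe $e^{-V_\theta(t_0,\cdot)}\partial_x F$ to be an arbitrary $C^1$ compactly supported function at each time slice, so $D(L_\theta)$ is rich enough to separate the finite-dimensional distributions of any two candidate solutions once the martingale identity is combined with standard time-slicing.

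Global existence relies on a Foster--Lyapunov argument exploiting the confining quadratic part of $V_\theta$ together with the growth condition $\theta(x) = o(x^2)$ encoded in (\ref{env}). I would build a Lyapunov function $F_0\in\overline D(L_\theta)$ of the form
\begin{equation*}
F_0(t,x) = \int_0^x e^{V_\theta(t,y)}\!\int_0^y e^{-V_\theta(t,z)}\,\dd z\,\dd y
\end{equation*}
(or a suitable variant that is non-negative, coercive in $x$, and regular in $t$), for which $e^{-V_\theta}\partial_x F_0$ is explicit and $L_{\theta,t}F_0$ admits a clean bound in terms of $F_0$. The $o(x^2)$ decay of $\theta$ at infinity controls $e^{\pm V_\theta}$, while the quadratic part gives the coercive lower bound. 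A standard argument then yields $\mathbb{P}_\theta(\tau_e<\infty)=0$.

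Finally, the extended Itô formula for $F\in\overline D(L_\theta)$ follows by approximation: given such $F$, I mollify $e^{-V_\theta}\partial_x F$ in $x$ to obtain a sequence $F^m\in D(L_\theta)$, apply (\ref{MP_Diff}) to $F^m$, and pass to the limit using dominated convergence for the drift $\int L_\theta F^m\,\dd u$ (the integrand is locally bounded by assumption) and $L^2$ convergence for the Itô integral, whose integrand $\partial_x F$ is locally bounded. The Brownian motion $B$ in (\ref{ito_diff_intro}) is identified by applying the formula to $F(t,x)=x$, which lies in $\overline D(L_\theta)$. The hardest parts, I expect, will be the uniqueness of the martingale problem (carefully exploiting the tailoring of $D(L_\theta)$ to cancel the singular part of $\partial_x V_\theta$) and the explicit Lyapunov construction ensuring non-explosion uniformly in $\theta$ from a set of full Wiener measure.
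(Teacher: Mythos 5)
Your overall shape (attack the martingale problem for $(L_\theta,D(L_\theta))$ directly, then rule out explosion with a Lyapunov function exploiting the quadratic confinement) is reasonable, but the two load-bearing steps are not proved and would not go through as written. For uniqueness, richness of $D(L_\theta)$ does not by itself separate the laws of two solutions: the Stroock--Varadhan scheme requires solving the associated backward (resolvent/Kolmogorov) equations by functions \emph{of the domain}, and being able to prescribe $e^{-V_\theta(t_0,\cdot)}\partial_x F$ arbitrarily on a time slice gives you no such solvability. The same issue undermines your existence argument: tightness of the mollified solutions does not follow from local boundedness of $V_{\theta^m}$, because the drifts $\partial_x V_{\theta^m}$ blow up as $m\to\infty$, and identifying any limit as a solution requires test functions adapted to $\theta$, not to $\theta^m$. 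The paper resolves all of this at once with the time-dependent pseudo-scale function $S_\theta$ of (\ref{pseudoscale}): Proposition \ref{equivalent_mp} shows the martingale problem is equivalent to the SDE (\ref{sde}), whose coefficients are continuous and strictly elliptic, so existence, uniqueness up to explosion, strong Markovianity and the extended It\^o formula all come from Stroock--Varadhan plus the It\^o--Krylov formula. Note in passing that your limit passage for $F\in\overline D(L_\theta)$ needs more than dominated convergence: after mollifying, $\partial_x\bigl(e^{-V_\theta}\partial_x F^m\bigr)$ converges only almost everywhere, so you need absolute continuity of the occupation measure of $Z$ (a Krylov-type estimate), which is exactly what the transformed SDE supplies.

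The non-explosion step is where the paper identifies an obstruction that your sketch does not confront. For Lyapunov candidates built on $U_\alpha(x)=e^{\alpha x^2/2}$, the time derivative of the environment term (the scaling flow $T_t\theta$ inside $V_\theta$) produces a positive contribution $+\tfrac{\alpha}{2}x^2(\cdots)$ of exactly the same order as the confining contribution $-\tfrac{\alpha(1-\alpha)}{2}x^2(\cdots)$; at $a=1$ their sum is $+\tfrac{\alpha^2}{2}x^2(\cdots)>0$, so the drift inequality fails. This is why Proposition \ref{nonexplosion} is proved only for the more confining equation (\ref{diff_intro_g}) with $a>1$ (where one can take $\alpha<a-1$), and Theorem \ref{nonexplosion_f} then transfers non-explosion and uniqueness to $a=1$ by a Girsanov change of measure, Novikov's criterion being verified through the exponential moment bound (\ref{moment_exp_explo}) and iterated over small time intervals via the Markov property. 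Your candidate $F_0(t,x)=\int_0^x e^{V_\theta(t,y)}\bigl(\int_0^y e^{-V_\theta(t,z)}\,\dd z\bigr)\dd y$ is not obviously hopeless --- the dangerous boundary term $\tfrac{x}{2}e^{V_\theta(t,x)}\int_0^x e^{-V_\theta(t,z)}\,\dd z$ in $\partial_t F_0$ is of the same order as a negative term $-\int_0^x \tfrac{y^2}{2}e^{V_\theta(t,y)}\int_0^y e^{-V_\theta(t,z)}\,\dd z\,\dd y$, so a cancellation could occur --- but you have not established $L_\theta F_0\le \lambda F_0$ on $[s,T]\times\mathbb R$, and the phrase ``a standard argument then yields non-explosion'' hides precisely the difficulty (valid moreover for \emph{every} $\theta\in\Theta$, not just $\mathcal W$-almost every one) that the paper circumvents with the $a>1$ Lyapunov construction followed by the Girsanov detour.
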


Since the one-dimensional equation (\ref{diff_intro}) is not time-homogeneous, there are not simple conditions which characterize the nonexplosion as in \cite{Brox,RussoTrutnau,FlanRussoWolf}. Therefore, the main difficulty is to construct Lyapunov functions. To this end, we consider some linear perturbations of equation (\ref{diff_intro}), given in (\ref{diff_intro_g}), for which we are able, when the potential (\ref{potential_intro_g}) is sufficiently confining, to construct suitable Lyapunov functions (see Proposition \ref{nonexplosion}). Then we prove (see Theorem \ref{nonexplosion_f}) nonexplosion, existence and uniqueness (in a more general setting) by using the Girsanov transformation and by considering the SDE (\ref{sde}). This equation is connected to equation (\ref{diff_intro_g}), when the associated potential is attractive, via the pseudo-scale function $S_\theta$ defined in (\ref{pseudoscale}) (see Proposition \ref{equivalent_mp}). This method is a generalization in the time-inhomogeneous setting of that employed in \cite{Brox,FlanRussoWolf,RussoTrutnau} and which uses the effective scale function.


\subsection{Strong Feller property, cocycle property and lower local Aronson estimate}

In the following, we denote by $\mathds P_{s,z}(\theta)$ the distribution of the weak solution to equation (\ref{diff_intro}), called the quenched distribution, which existence is stated in Theorem \ref{existence_intro}. We introduce the canonical process $\{X_t: t\geq 0\}$ on the space of continuous functions from $[0,\infty)$ to $\mathbb R$, endowed with its standard Borel $\sigma$-field $\mathcal F$, and we denote by $P_\theta(s,z;t,\dd x)$ and $P_{s,t}(\theta)$, the probability transition kernel and  the associated Markov kernel defined, for all measurable nonnegative function $F$ on $\mathbb R$ by
\begin{equation}\label{semigroup_transition_intro}
{P}_{{s,t}}(\theta)F(z):=\mathds E_{s,z}(\theta)\left[F(X_t)\right] = \int_{\mathbb R} F(x) P_\theta(s,z;t,\dd x).
\end{equation}

\begin{thmm}\label{markov_feller_intro} For any $r\in\mathbb R$ and all $\theta\in\Theta$, the family $\{\mathds P_{s,z}(\theta) : s\geq 0,\, z\in\mathbb R\}$ is strongly Feller continuous. Moreover, the associated time-inhomogeneous semigroups $\{P_{s,t}(\theta): t\geq s\geq 0,\,\theta\in\Theta\}$ satisfy
\begin{equation}\label{presque_cocycle_intro}
{P}_{s,s+t}(\theta)={P}_{0,t}(e^{-r s}\,T_s\theta)
\quad\mbox{and}\quad P_{0,s+t}(\theta)=P_{0,s}(\theta)P_
{0,t}(e^{-r s}\,T_{s}\theta).
\end{equation}
Besides, $P_\theta(s,z;t,\dd x)$ admits a density $p_\theta(s,z;t,x)$, which is measurable with respect to  $(\theta,s,t,z,x)$ on  $\Theta\times \{t>s\geq 0\}\times \mathbb R^{2}$, and which satisfies the lower local Aronson estimate: for all $\theta\in \Theta$, $T>0$ and compact set $C\subset \mathbb R$, there exists $M>0$ such that, for all $0\leq s<t\leq T$ and $z,x \in C$,
\begin{equation}\label{lowerlocalaronson_intro}
p_{\theta}(s,z;t,x)\geq \frac{1}{\sqrt{M(t-s)}}\exp{\left(-M \frac{|z-x|^{2}}{t-s}\right)}. 
\end{equation}
\end{thmm}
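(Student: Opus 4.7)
The three statements are of rather different nature, so I would treat them separately, all leveraging the uniqueness of weak solutions (Theorem \ref{existence_intro}) together with the pseudo-scale construction $S_\theta$ introduced in the following sections.

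I would start with the cocycle identities \eqref{presque_cocycle_intro}, which follow from a direct algebraic computation on the potential. Using the group law $T_u \circ T_s = T_{u+s}$ — immediate from \eqref{scaling_intro} — and the linearity of $T_u$, one has
\[
V_\theta(s+u, x) \;=\; \frac{x^{2}}{2} + e^{-r(s+u)}\,T_{s+u}\theta(x) \;=\; \frac{x^{2}}{2} + e^{-ru}\,T_u\bigl(e^{-rs}T_s\theta\bigr)(x) \;=\; V_{e^{-rs}T_s\theta}(u, x).
\]
Consequently, if $Z$ solves \eqref{diff_intro} in environment $\theta$ starting from $(s, z)$, then the time-shifted process $u \mapsto Z_{s+u}$ is a weak solution of \eqref{diff_intro} in environment $e^{-rs}T_s\theta$ with initial condition $(0, z)$. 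Uniqueness in Theorem \ref{existence_intro} then yields $P_{s, s+t}(\theta) = P_{0, t}(e^{-rs}T_s\theta)$, and the second identity follows by combining this with the Markov property $P_{0, s+t}(\theta) = P_{0, s}(\theta)\,P_{s, s+t}(\theta)$.

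For the existence of a transition density, the strong Feller property and the Aronson lower bound, I would apply the pseudo-scale function $S_\theta$ of \eqref{pseudoscale} to transform \eqref{diff_intro} into the regular SDE \eqref{sde} (Proposition \ref{equivalent_mp}), whose coefficients are continuous, locally bounded, and locally uniformly elliptic — in particular, the distributional term $\theta'$ has been absorbed. Classical parabolic theory (Aronson–Nash–Stroock–Varadhan) then produces a jointly continuous transition density $q_\theta(s, y; t, y')$ for the transformed process together with a local Gaussian lower bound on it. Pulling back through the bi-Lipschitz homeomorphism $S_\theta(t, \cdot)$ produces the required density $p_\theta(s, z; t, x)$ and, after absorbing the local Lipschitz constants of $S_\theta$ and $S_\theta^{-1}$ (on the compact $C$ and the horizon $[0,T]$) into the constant $M$, the estimate \eqref{lowerlocalaronson_intro}. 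Strong Feller continuity is then a consequence of the continuity of $z \mapsto p_\theta(s, z; t, x)$ combined with Scheffé's lemma applied to the probability densities $p_\theta(s, z_n; t, \cdot)$.

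The main obstacle I anticipate lies in the third ingredient, the joint measurability of $p_\theta(s, z; t, x)$ in the five variables $(\theta, s, t, z, x)$: once the density is obtained \emph{pointwise} in $\theta$ by the above pullback, one must check that this construction is measurable in $\theta$. I would handle this by a stability argument, approximating $\theta$ uniformly on compacts by a sequence of smooth paths $\theta_n$, checking that both the pseudo-scale function $S_{\theta_n}$ and the heat kernel of the associated transformed SDE depend continuously on $\theta_n$, and passing to the limit — measurability being preserved under pointwise limits. The cocycle identities and the Aronson bound are then essentially consequences of well-established results once this $\theta$-measurable density has been produced.
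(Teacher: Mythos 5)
Your overall route coincides with the paper's: the cocycle identities come from the shift-invariance of the potential together with uniqueness of the martingale problem and the Markov property, and the density, strong Feller property and lower bound are obtained by passing through the pseudo-scale function $S_\theta$ to the regular SDE (\ref{sde}) and pulling back, with measurability in $\theta$ handled by a stability-in-$\theta$ argument. The paper does exactly this, proving an analogue of the theorem for (\ref{sde}) (Theorem \ref{markov_feller_sde}) and transferring it via Proposition \ref{equivalent_mp} and the identities (\ref{cocycle_coeff}).

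The one genuine soft spot is your claim that \emph{classical} Aronson--Nash theory directly yields a local Gaussian lower bound for the transformed process. The generator of (\ref{sde}) is a non-divergence-form operator whose diffusion coefficient $\sigma_\theta$ is merely continuous in $x$ (it involves $e^{Q_\theta}$ composed with $H_\theta$, hence no H\"older or Lipschitz regularity), and Aronson's two-sided bounds are stated for uniformly elliptic \emph{divergence-form} operators with bounded measurable coefficients; they do not apply off the shelf here. The paper has to do real work at this point: it truncates the coefficients, performs a further change of scale $k_\theta^{(\delta)}$ that turns the truncated operator into a uniformly elliptic divergence-type operator with bounded coefficients, applies Aronson's global estimates there, pulls back through the locally Lipschitz map $k_\theta^{(\delta)}$, and then runs the localization argument of Stroock (Theorem II.1.3 of the elliptic-operators notes) to convert the estimates for the killed kernel into the local lower bound (\ref{lowerlocalaronson_intro}) for the true kernel via $\tilde p\ge \tilde p_\delta$. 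Without this intermediate divergence-form reduction and truncation/localization step, your deduction of the lower bound would not go through as stated. A secondary remark: for joint measurability the paper does not approximate $\theta$ by smooth paths; it invokes the Stroock--Varadhan continuity theorem (continuity of $(\theta,s,\tilde z)\mapsto \widetilde{\mathds E}_{s,\tilde z}(\theta)[G]$, using that the coefficients depend continuously on $\theta$ in the topology of $\Theta$), upgrades it to measurability of $(\theta,s,\tilde z,t)\mapsto \widetilde{\mathds E}_{s,\tilde z}(\theta)[F(X_t)]$ by Lipschitz approximation and tightness, and then produces a jointly measurable \emph{version} of the density by a disintegration/Radon--Nikodym argument; your plan needs some such device, since stability of the laws alone does not by itself select a measurable density, and convergence of the heat kernels themselves under uniform convergence of merely continuous coefficients is not a standard quotable fact.
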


The idea is to study the more general equivalent SDE (\ref{sde}) and to prove, by using standard technics, the analogous theorem for this diffusion (see Theorem \ref{markov_feller_sde}).

Besides, the transition density being measurable with respect to $\theta$, we can define the annealed distribution $\widehat{\mathds P}_{s,z}$ and the associated Markov kernel $\widehat P_{s,t}$ as 
\begin{equation*}
 \widehat{\mathds P}_{s,z}:=\mathds E_{\scriptscriptstyle\mathcal W}[\mathds P_{s,z}]:=\int_\Theta \mathds P_{s,z}(\theta)\,\mathcal W(\dd \theta)\;\;\mbox{and}\;\;
 {\widehat P}_{s,t}:=\mathds E_{\scriptscriptstyle\mathcal W}[P_{s,t}]:=\int_\Theta P_{s,t}(\theta)\,\mathcal W(\dd \theta).
\end{equation*}
We point out that 
$X$ is not a Markov process under $\widehat{\mathds P}_{s,z}$. Moreover, in the light of (\ref{presque_cocycle_intro}),
we can assume without loss of generality that $s=0$ in (\ref{diff_intro}) and we set
\begin{multline*}
\mathds P_{z}(\theta):=\mathds P_{0,z}(\theta),\quad P_\theta(z;t,\dd x)=P_\theta(0,z;t,\dd x),\quad p_\theta(z;t,x)=p_\theta(0,z;t,x),\\
P_{t}(\theta):=P_{0,t}(\theta),\quad \mbox{and}\quad  {\widehat P}_{t}:=\widehat P_{0,t}.
\end{multline*}
Furthermore, we can see that the case $r=0$ is of particular interest since the relation (\ref{presque_cocycle_intro}) can be written in this situation  
\begin{equation}\label{cocycle_intro}
{P}_{s,s+t}(\theta)={P}_{t}(T_s\theta)\quad\mbox{and}\quad P_{s+t}(\theta)=P_{s}(\theta)P_{t}(T_{s}\theta).
\end{equation}
Roughly speaking, the equation (\ref{diff_intro}) is time-homogeneous in distribution since from the scaling property $\mathcal W$ is $(T_t)$-invariant. Relation (\ref{cocycle_intro}) is called the cocycle property and it induces (see \cite{Arnold} for a definition) a random dynamical system (RDS) over $(\Theta,\mathcal B,\mathcal W, (T_t))$ on the set $\mathcal M$ of signed measures on $\mathbb R$, by setting, for all $\nu\in\mathcal M$,
\begin{equation*}
\nu P_t(\theta)(\dd x):=  \int_{\mathbb R} P_\theta(z;t,\dd x)\,\nu(\dd z)=\left(\int_{\mathbb R}p_\theta(z;t,x)\,\nu(\dd z)\right)\dd x.
\end{equation*}
Note that the subset of probability measures $\mathcal M_{1}\subset \mathcal M$ is invariant under this RDS.


\subsection{Quasi-invariant and stationary probability measures}

To state our next important results, we need to introduce some additional notations.  We said that $\mu$ is a random probability measure on $\mathbb R$, over $(\Theta,\mathcal B,\mathcal W)$, if $\mu_{\theta}\in\mathcal M_{1}$ for $\mathcal W$-almost all $\theta$, and if $\theta\longmapsto \mu_\theta(A)$ is measurable for all Borel set $A$. For such random probability measure $\mu$, we introduce the probability measure $\hat \mu$ defined by 
\begin{equation*} 
\hat \mu:= \mathds E_{\scriptscriptstyle\mathcal W}[\mu]:=\int_{\Theta} \mu_{\theta}\,\mathcal W(\dd\theta).
\end{equation*}
Let $\alpha\in\mathbb R$ and $U_\alpha$, $V_\alpha$ be the functions on $\mathbb R$ defined by
\begin{equation}\label{lyapunovfunctions_intro} 
U_\alpha(x):=\exp\left(\alpha\frac{x^2}{2}\right)\quad\mbox{and}\quad V_\alpha(x):=\exp(|x|^\alpha).
\end{equation}
The $F$-total variation norm, $F\in\{U_\alpha,V_{\alpha}\}$, of a signed measures $\nu$, is defined by
\begin{equation*}\label{totalvariation_intro}
\|\nu\|_{F} := \sup\left\{|\nu(f)| : |f|\leq F,\;\mbox{$f$ bounded and measurable}\right\}.
\end{equation*}
Note that if $\nu\in\mathcal M_1$ then $\|\nu\|_{F}=\nu(F)$. In addition, we set 
\begin{equation*}
\mathcal M_{F}:=\{\nu \in\mathcal M : \|\nu\|_{F}<\infty\}
\quad\mbox{and}\quad
\mathcal M_{1,F}=\mathcal M_1\cap \mathcal M_{F}.
\end{equation*}

\begin{thmm}\label{invariant_intro} Assume that $r=0$. There exists a random probability measure $\mu$ on $\mathbb R$ over $(\Theta,\mathcal B,\mathcal W)$, unique up to a $\mathcal W$-null set, such that, for all $t\geq 0$,
\begin{equation}\label{invariance_intro}
\mu_\theta P_t(\theta) =\mu_{T_t\theta}\quad\mathcal W\mbox{-a.s.}
\end{equation}
Moreover, for all $\alpha\in(0,1)$, the quasi-invariant measure satisfies
\begin{equation}\label{tail}
\mu_\theta \in\mathcal M_{1,U_{\alpha}}\quad\mathcal W\mbox{-a.s.}\quad\mbox{and}\quad \hat\mu \in\mathcal M_{1,V_{\alpha}}.
\end{equation}
Furthermore, there exists $\lambda>0$ such that, for all $\nu \in\mathcal M_{1,U_{\alpha}}$ and $\hat \nu\in\mathcal M_{1,V_{\alpha}}$,
\begin{equation}\label{weakergodicity_intro_1}
\limsup_{t\to\infty}\frac{\log(\left\|\nu P_{t}(\theta) - \mu_{T_t\theta}\right\|_{U_\alpha})}{t}\leq -\lambda\quad \mathcal W\mbox{-a.s.} 
\end{equation}
and 
\begin{equation}\label{weakergodicity_intro_2}
\lim_{t\to\infty}{\|\hat\nu \widehat P_{t} - \hat\mu \|_{V_\alpha}}=0.
\end{equation}
\end{thmm}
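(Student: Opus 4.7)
My plan is to build the random quasi-invariant measure $\mu_\theta$ by a pullback argument for the random dynamical system encoded by the cocycle relation \eqref{cocycle_intro}, and then to deduce existence, tail estimates and both convergences from a Harris-type geometric contraction in the $U_\alpha$-weighted total variation norm. The two ingredients I need, in addition to the cocycle, are a Foster--Lyapunov drift inequality for $U_\alpha$ with Fernique-integrable random constants and a uniform minorization on its sublevel sets provided by the lower local Aronson bound \eqref{lowerlocalaronson_intro}.

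First I would establish, for every $\alpha\in(0,1)$ and $t\geq 1$, a drift inequality
\[
P_t(\theta) U_\alpha(z) \leq \rho(t)\, U_\alpha(z) + K(t,\theta),
\qquad \rho(t)\xrightarrow[t\to\infty]{} 0,
\]
with $K(t,\cdot)$ admitting $\mathcal W$-moments of every order. Applying the generalized It\^o formula \eqref{ito_diff_intro} to a smooth truncation of $U_\alpha$ reduces this to the generator computation
\[
L_{\theta,t} U_\alpha(x) = \tfrac12\alpha(\alpha-1) x^2\, U_\alpha(x) + \tfrac12\alpha\, U_\alpha(x) - \tfrac12\alpha x\,\partial_x T_t\theta(x)\, U_\alpha(x),
\]
in which the leading coefficient $\alpha(\alpha-1)x^2/2$ is negative and confining. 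The environmental term is absorbed using the subquadratic growth built into the definition \eqref{env} of $\Theta$: for every $\varepsilon>0$ one has $|T_t\theta(x)|\leq \varepsilon x^2 + C(\varepsilon,t,\theta)$, and Fernique's theorem on $(\Theta,\mathcal B,\mathcal W)$ makes the random constant integrable to all orders. Rewriting $L_\theta$ in the self-adjoint form appearing in \eqref{ig_intro} (or an integration by parts) converts the distributional $\partial_x T_t\theta$ into a pointwise occurrence of $T_t\theta$, so the estimate goes through.

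Combining this drift inequality with \eqref{lowerlocalaronson_intro} on any sublevel set $\{U_\alpha\leq R\}$ produces a Harris minorization with $\mathcal W$-integrable constants, and the standard coupling construction yields a contraction
\[
\bigl\|\nu_1 P_t(\theta)-\nu_2 P_t(\theta)\bigr\|_{U_\alpha} \leq \kappa(\theta)\,e^{-\lambda t}\,\|\nu_1-\nu_2\|_{U_\alpha}
\qquad (\nu_1,\nu_2\in\mathcal M_{1,U_\alpha})
\]
with a deterministic $\lambda>0$ and an $\mathcal W$-integrable prefactor $\kappa$. The random measure is now constructed by pullback: the cocycle \eqref{cocycle_intro} lets one write, for $m\geq n$,
\[
\delta_0 P_m(T_{-m}\theta)-\delta_0 P_n(T_{-n}\theta) = \bigl[\delta_0 P_{m-n}(T_{-m}\theta)-\delta_0\bigr] P_n(T_{-n}\theta),
\]
so the contraction, together with a Borel--Cantelli bookkeeping on the random constants evaluated at $T_{-m}\theta$ (which is distributed like $\theta$ under $\mathcal W$), makes $\mu_\theta^{(n)}:=\delta_0 P_n(T_{-n}\theta)$ Cauchy in $\|\cdot\|_{U_\alpha}$ for $\mathcal W$-a.e.\ $\theta$. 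The limit $\mu_\theta$ lies in $\mathcal M_{1,U_\alpha}$ by Fatou applied to the iterated drift bound, satisfies \eqref{invariance_intro} by continuity of $P_t(\theta)$ in the $U_\alpha$-norm, and is unique up to a $\mathcal W$-null set because any two candidates differ by a measure to which the same contraction applies.

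The quenched rate \eqref{weakergodicity_intro_1} is the contraction specialised to $\nu_2=\mu_\theta$. The tail statement \eqref{tail} follows because iterating the drift bound gives a uniform $L^p(\mathcal W)$ control on $\mu_\theta(U_\alpha)$, while $V_\alpha\leq C\,U_\alpha$ globally for $\alpha\in(0,1)$ yields $\hat\mu\in\mathcal M_{1,V_\alpha}$. For the annealed limit \eqref{weakergodicity_intro_2}, I would truncate $\hat\nu$ at a level $R$ so that the truncation lies in $\mathcal M_{1,U_\alpha}$ (the tail being controlled by $\hat\nu(V_\alpha)<\infty$), integrate the quenched bound against $\mathcal W$, dominate $\|\cdot\|_{V_\alpha}$ by $\|\cdot\|_{U_\alpha}$, and conclude by letting first $t\to\infty$ and then $R\to\infty$. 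The main obstacle, in my view, is the drift step: producing a clean Lyapunov inequality for $U_\alpha$ despite the distributional nature of $\partial_x T_t\theta$, and extracting moment bounds sufficiently uniform in $\theta$ to drive the pullback and the annealed passage to the limit.
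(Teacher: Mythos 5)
Your overall architecture (Foster--Lyapunov drift for $U_\alpha$, minorization from the Aronson bound, coupling, pullback along $T_{-n}\theta$ using the cocycle, then integration for the annealed statement) is the same as the paper's, but two of your steps have genuine gaps. The first is the drift step itself, which you acknowledge as the main obstacle but then dismiss in one line. The formula you write for $L_{\theta,t}U_\alpha$ contains $\partial_x T_t\theta(x)$, which is not a function and cannot be absorbed by any pointwise bound; and $U_\alpha\notin\overline D(L_\theta)$, so you cannot apply \eqref{ito_diff_intro} to it (even after smooth truncation). Doing the integration by parts honestly forces the Lyapunov candidate into the form $F^\varphi_\theta(t,x)=\int_0^x \exp[e^{-rt}T_t\theta(y)]\,\varphi(t,y)\,\dd y$ of (\ref{domainfunction}), and with $\varphi=U_\alpha'$ this function is \emph{not} uniformly comparable to $U_\alpha$: the distortion factor $\exp[\pm e^{-rt}T_t\theta(y)]$ is unbounded in $y$ for a typical Brownian path, so a drift inequality for $F^{U_\alpha'}_\theta$ does not transfer to a geometric drift for $U_\alpha$ with the same function on both sides, which is what the coupling machinery needs. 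This is exactly why the paper introduces the uniform affine approximation $W_{\gamma,\ve}(\theta)$ of Proposition \ref{uniform_random_approximation}: only the residual $\Delta_{\gamma,\ve}=\theta-W_{\gamma,\ve}$, uniformly bounded by $\ve$, is placed in the exponential (so $F^{\gamma,\ve}_\theta\asymp U_\alpha$ within $e^{\pm q\ve}$), at the price of a derivative term $W'_{\gamma,\ve}$ of slow growth $D_{\gamma,\ve}(\theta)L^{1/\gamma}(x)$ controlled by the H\"older seminorm $H_\gamma$ with Fernique tails. Without this (or a substitute), your Lyapunov inequality is not established.

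The second gap concerns your moment claims and the way you deduce \eqref{tail} and \eqref{weakergodicity_intro_2}. The random constants in the $U_\alpha$ drift are of size $\exp(c\,H_\gamma^{p}(\theta))$ with $p=2/(\gamma(1-\ell))>4$ (Lemma \ref{step3}), hence only $\log$-integrable under $\mathcal W$ — Fernique gives integrability of $\exp(\alpha H_\gamma^2)$ and nothing beyond exponent $2$. So your assertions that $K(t,\cdot)$ has $\mathcal W$-moments of every order, that $\mu_\theta(U_\alpha)$ is controlled in ${\rm L}^p(\mathcal W)$, and hence that $\hat\mu\in\mathcal M_{1,V_\alpha}$ follows from $V_\alpha\leq C\,U_\alpha$, are unjustified (the paper never claims $\hat\mu\in\mathcal M_{1,U_\alpha}$, precisely for this reason); likewise the annealed limit cannot be obtained by integrating the quenched $U_\alpha$-bound against $\mathcal W$, since the dominating quantities are not integrable. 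The paper handles the annealed part with a \emph{second} Lyapunov function built on $V_\alpha$ (taking $\gamma>\alpha/2$ so that the random constants have exponent $p<2$ and are Fernique-integrable, Lemma \ref{step3bis} and Proposition \ref{lyapunovValpha}), and proves Proposition \ref{annealconv} by dominated convergence in $\mathcal M_{V_\alpha}$. Relatedly, your claimed bound $\|\nu_1P_t(\theta)-\nu_2P_t(\theta)\|_{U_\alpha}\leq\kappa(\theta)e^{-\lambda t}\|\nu_1-\nu_2\|_{U_\alpha}$ with deterministic $\lambda$ and integrable $\kappa$ is not what the coupling gives here: the Harris constants vary along the orbit $T^k\theta$, the natural estimate is a product of random factors (Proposition \ref{A}), and the deterministic rate only emerges asymptotically through the ergodic theorem for $(T_t)$ (Proposition \ref{ergodicity}, Lemma \ref{lemmaconv}); your Borel--Cantelli bookkeeping would have to be replaced by this Birkhoff argument.
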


Linear RDSs have been studied in an extensive body of the literature. The dynamics (in particular the Lyapunov exponents) in the case where the discrete-time linear RDS acts on a finite dimensional space (the case of infinite products of random matrices) have been well understood for a long time, for instance in \cite{Osel,Guivarch},
whereas the situation where the general linear RDS acts on a separable Banach space has been newly studied in \cite{Zeng}.

Our goal in Theorem \ref{invariant_intro} is to obtain a quasi-invariant probability measure for the random Markov kernels $P_t(\theta)$ and to give convergence results in the separable Banach spaces $\mathcal M_{U_\alpha}$ (exponential convergence) and $\mathcal M_{V_\alpha}$. We need a kind of random Perron-Frobenius theorem, which has been, for example, obtained in \cite{ArnMatDem} for infinite products of nonnegative matrices, and more recently in \cite{kifer} for infinite products of stationary Markov kernels over a compact set. 

However, the Markov operators that we consider act on the infinite dimensional space $\mathcal M$ and are defined over the noncompact set $\mathbb R$. To overcome this problem, we need to see that $U_\alpha$ and $V_\alpha$ are  Foster-Lyapunov functions (see Propositions \ref{lyapunovUalpha} and \ref{lyapunovValpha}). More precisely, we show that Lyapunov exponents can be chosen independently of the environment $\theta$, while keeping a control on the expectation of the $U_\alpha$-norm and the $V_\alpha$-norm. The classical method to construct Foster-Lyapunov functions for Markov kernels is to construct Lyapunov functions for the infinitesimal generators (see Lemma \ref{step3} and \ref{step3bis}). Nonetheless, we stress that neither $U_\alpha$ nor $V_\alpha$ belong to the generalized domain $\overline D(L_\theta)$ and we need to approximate uniformly these functions by functions of this domain, while keeping a control on the expectation under the Wiener measure. This is possible by using the Hölder continuity of Brownian paths (see Proposition \ref{uniform_random_approximation}). 

Then, we use the explicit bound on convergence of time-inhomogeneous Markov chains (see Proposition \ref{A}), obtained from \cite{DoucMoulines}, via coupling constructions, Foster-Lyapunov conditions and the cocycle property, together with  the ergodicity of the underlying dynamical system $(\Theta,\mathcal B,\mathcal W,(T_t)_{t\in\mathbb R})$. We point out that the Aronson estimate (\ref{lowerlocalaronson_intro}) is necessary to the coupling constructions.

Furthermore, let us denote by $\{U_t : t\geq 0\}$ the canonical process on the space $\Xi$ of continuous functions from $[0,\infty)$ to $\Theta$, endowed with its standard Borel $\sigma$-field $\mathcal G$, and introduce the Markov kernels $\Pi_{\theta,z}$ on $(\Xi\times \Omega,\mathcal G\otimes\mathcal F)$, and the probability  measure $\overline \mu$ on $(\Theta\times \mathbb R,\mathcal B\otimes\mathcal B(\mathbb R))$, defined by the product and disintegration formula 
\begin{equation*}
\Pi_{\theta,z}:=\delta_{\{T_t\theta : t\geq 0\}}\otimes \mathds P_z(\theta)\quad\mbox{and}\quad \overline \mu(\dd\omega,\dd x):=\mathcal W(\dd\omega)\mu_\omega(\dd x).
\end{equation*}
Then we can see that $\{(U_t,X_t) : t\geq 0\}$ is a time-homogeneous Markov process under $\Pi_{\theta,z}$ such that $\overline \mu$ is an invariant initial distribution. This process is called the skew-product Markov process (see \cite{Cog1,Orey}
 for the discrete-time situation). By applying standard results on general time-homogeneous Markov processes (see for instance \cite{MeTw}) we deduce that for all $F\in{\rm L}^1(\Theta\times \mathbb R,\overline \mu)$, $z\in\mathbb R$ and $\mathcal W$ almost all  $\theta\in\Theta$, 
\begin{equation*}
\lim_{t\to\infty}\frac{1}{t}\int_0^t F(U_\tau,X_\tau)\, \dd \tau = \int_{\Theta\times \mathbb R} F(\omega,x) \,\overline \mu(\dd \omega,\dd x),\quad \Pi_{\theta,z}\mbox{-a.s.}
 \end{equation*} 
Note that equation (\ref{tail}) provides some information on the tails of $\mu_\theta$ and $\hat\mu$.{}


\begin{thmm}\label{CG} Assume that $r>0$. For any $z\in\mathbb R$ and for $\mathcal W$-almost all $\theta\in\Theta$, the following convergence holds under the quenched distribution $\mathds P_z(\theta)$,
\begin{equation}\label{y3}
\lim_{t\to\infty} X_t \overset{(\dd)}{=} \mathcal N(0,1).	
\end{equation}
\end{thmm}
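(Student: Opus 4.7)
The plan is to exploit the identity
\begin{equation*}
P_{0, t}(\theta) = P_{0, t-T}(\theta)\,P_{0, T}(\eta_{t-T}), \qquad \eta_s := e^{-rs}\, T_s \theta,
\end{equation*}
obtained by combining Chapman--Kolmogorov with $(\ref{presque_cocycle_intro})$. The point is that for $r > 0$ the shifted environment $\eta_s$ converges $\mathcal W$-a.s.\ to the null environment $0 \in \Theta$, locally uniformly in $x$: for each fixed $x$, $s \mapsto T_s \theta(x)$ is a stationary Ornstein--Uhlenbeck process and is therefore of order $O(\sqrt{\log s})$ a.s., which the factor $e^{-rs}$ kills; the joint Hölder regularity of Brownian paths, already exploited in Proposition \ref{uniform_random_approximation}, upgrades this to local uniformity in $x$. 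Since the null environment corresponds precisely to the Ornstein--Uhlenbeck diffusion, whose semigroup $Q_t := P_{0, t}(0)$ is geometrically ergodic with invariant law $\mathcal N(0, 1)$, one expects $P_{0, T}(\eta_{t-T})$ to be close to $Q_T$ for large $t - T$, and hence $\mathds P_z(\theta) \circ X_t^{-1} \to \mathcal N(0,1)$.

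To implement this, fix a bounded continuous test function $f$, set $c := \int f \, \dd \mathcal N(0,1)$, and let $\epsilon > 0$. First choose $T$ large enough that $|Q_T f(y) - c| \le \epsilon$ uniformly on some compact $K \subset \mathbb R$, using exponential ergodicity of $Q_t$. The Foster--Lyapunov machinery behind Theorem \ref{invariant_intro}, based on the functions $U_\alpha$ from $(\ref{lyapunovfunctions_intro})$, applies verbatim in the regime $r > 0$: the quadratic confining part of $V_\theta$ dominates the decaying perturbation $e^{-rt}\,T_t\theta$, yielding $\sup_{s \ge 0} \mathds E_z(\theta)[U_\alpha(X_s)] < \infty$ for $\mathcal W$-a.e.\ $\theta$, and choosing $K$ large gives the uniform tightness bound $\delta_z P_{0, s}(\theta)(K^c) \le \epsilon$ for all $s \ge 0$. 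Decomposing
\begin{multline*}
\mathds E_z(\theta)[f(X_t)] - c = \int \bigl[ P_{0,T}(\eta_{t-T})f(y) - Q_T f(y) \bigr]\, P_\theta(z; t-T, \dd y) \\
+ \int \bigl[ Q_T f(y) - c \bigr]\, P_\theta(z; t-T, \dd y),
\end{multline*}
the tightness bound controls the second integral by $\epsilon\,(1 + 2\|f\|_\infty)$, while the first integral vanishes as $t \to \infty$ once one has the locally uniform convergence $P_{0,T}(\eta_s) f(y) \to Q_T f(y)$. Sending first $t \to \infty$ and then $\epsilon \to 0$ yields $(\ref{y3})$.

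The main obstacle is precisely this last convergence, i.e.\ the continuity of $P_{0,T}(\cdot)$ with respect to locally uniform convergence of the environment. The natural route passes through the pseudo-scale SDE $(\ref{sde})$, whose coefficients depend on the environment $\eta$ only through the pseudo-scale $S_\eta$ and the associated integrated quantities; these are continuous functionals of $\eta$ for the topology of locally uniform convergence on compacts, thereby avoiding the ill-defined pointwise derivative of $\eta$. Standard stability of the martingale problem associated with $(\ref{sde})$, together with the lower Aronson bound $(\ref{lowerlocalaronson_intro})$ and the strong Feller regularity provided by Theorem \ref{markov_feller_intro}, then delivers the required locally uniform convergence of $P_{0,T}(\eta_s)f$ to $Q_T f$, for $\mathcal W$-a.e.\ $\theta$, completing the argument.
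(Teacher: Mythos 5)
Your argument is correct in outline, but it reaches (\ref{y3}) by a genuinely different route from the paper. The paper relies on the same two pillars you use --- tightness obtained from the Foster--Lyapunov function $U_\alpha$ of Proposition \ref{lyapunovUalpha}, combined with (\ref{presque_cocycle_intro}) and the a.s.\ control of $\sup_m e^{-rpm}H_\gamma^p(T^m\theta)$, which is exactly where $r>0$ enters, and the pseudo-scale transformation $S_\theta$ of (\ref{pseudoscale}) --- but instead of your factorization $P_{0,t}(\theta)=P_{0,t-T}(\theta)P_{0,T}(e^{-r(t-T)}T_{t-T}\theta)$ it observes that the transformed process $S_\theta(t,X_t)$ solves the SDE (\ref{sde}) whose coefficients converge, uniformly on compact sets, to those of the Ornstein--Uhlenbeck diffusion in natural scale ($\sigma_\theta\to S^{\prime}\circ S^{-1}$, $d_\theta\to 0$), and then invokes \cite[Lemma 4.5]{MY} (asymptotic time-homogeneity and $S_\ast\Gamma$-ergodicity plus boundedness in probability imply convergence in law to $S_\ast\Gamma$), finally undoing the scale change. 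Your combination of the semigroup factorization, exponential ergodicity of the null-environment (Ornstein--Uhlenbeck) semigroup $Q_T$, and stability of the martingale problem for (\ref{sde}) under locally uniform convergence of the environment --- which is precisely the \cite[Theorem 11.1.4]{SV} continuity argument the paper already runs inside the proof of Theorem \ref{markov_feller_sde} for measurability purposes --- amounts to an inlined, self-contained proof of that lemma in the present setting; what you lose is brevity, what you gain is independence from \cite{MY} and an explicit quantification of how close the dynamics after time $t-T$ is to the unperturbed one. Three small repairs: choose the compact $K$ from the tightness bound first and only then $T$ such that $|Q_Tf-c|\leq\ve$ on $K$ (as written the choice is circular); split the first integral over $K$ and $K^{\mathtt c}$ as well, since it also runs over all of $\mathbb R$; and note that the strong Feller property and the Aronson bound are not really what is needed for the continuity step --- joint continuity of $(\theta,\tilde z)\longmapsto \widetilde{\mathds E}_{0,\tilde z}(\theta)[G]$ coming from the stability theorem, together with a compactness argument and $H_{\eta}(T,\cdot)\to S^{-1}$ locally uniformly, already yields the locally uniform convergence of $P_{0,T}(e^{-rs}T_s\theta)f$ to $Q_Tf$.
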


Here the space-time mixing environment is, contrary to  Theorem \ref{invariant_intro}, asymptotically negligible and the diffusion behaves, in long time, as the underlying Ornstein-Uhlenbeck process. Since the cocycle property (\ref{cocycle_intro}) is no longer satisfied, we loss the structure of linear RDS. To prove this result, we use once-again Proposition \ref{A} but we also need to apply \cite[Lemma 4.5]{MY} to the more general equivalent SDE (\ref{sde}). 

Following the terminology used in \cite{MY}, it is not difficult to see that this equation is asymptotically time-homogeneous and $S_\ast \Gamma$-ergodic, with $S$ the scale function of the Ornstein-Uhlenbeck diffusion process having $\Gamma\sim\mathcal N(0,1)$ as stationary distribution and $S_*\Gamma$ the pushforward distribution of $\Gamma$ by $S$.  As they mention in \cite{MY}, the main difficulty to apply this lemma is usually to show the boundedness in probability. To this end, we need to use  again the Foster-Lyapunov functions $U_\alpha$ and $V_\alpha$.


\section{Application to time-inhomogeneous Brox's  diffusions}
\label{sec:3}
\setcounter{equation}{0}

\subsection{Associated models}

We turn now to our main application, the study of the socalled time-inhomogeneous Brox's diffusion. We consider, for any $\beta\in\mathbb R$, the informal SDE driven by a standard Brownian motion $B$, independent of the Brownian environment  $(\Theta,\mathcal B,\mathcal W)$,
\begin{equation}\label{broxinhom_intro}
\dd Y_t= \dd B_t -\frac{1}{2}\frac{\theta^\prime(Y_t)}{t^{\beta}}\,\dd t,\quad Y_{u}=y\in\mathbb R,\; t\geq u>0,\;\theta\in\Theta.
\end{equation}
A weak solution to equation (\ref{broxinhom_intro}) is, in the same manner as in definition \ref{solution_intro}, the diffusion whose conditional generator, given $\theta\in\Theta$, is 
\begin{multline*}
\mathcal L_\theta:=\left[\frac{1}{2} e^{{\theta(x)}/{t^\beta}}\frac{\partial}{\partial x}\left(e^{-{\theta(x)}/{t^\beta}}\frac{\partial}{\partial x}\right)\right] + \frac{\partial }{\partial t},\quad\mbox{with}\\ 
D(\mathcal L_\theta):=
\left\{F(t,x)\in {\rm C}^{1} : e^{-{\theta(x)}/{t^{\beta}}}\partial_x F(t,x)\in {\rm C}^{1}\right\}.	
\end{multline*}
As for equation (\ref{diff_intro}), where we can assume  without loss of generality that $s=0$, we can assume that $u=1$ in equation (\ref{broxinhom_intro}). Moreover, as in (\ref{diff_my_intro}), we assume that $\beta=r+{1}/{4}$ and we 
define, for all continuous functions $\omega$ on $[1,\infty)$ and all measurable function $G$ on $[1,\infty)\times\mathbb R$, 
$\Phi_e(\omega)(t):={\omega(e^t)}/{e^{t/2}}$ and $\mathcal E G (t,x):=G(e^t,e^{t/2}x)$. 

It is a simple calculation to see that $\mathcal E : D(\mathcal L_\theta)\longrightarrow D(L_\theta)$ is a bijection and that
$L_\theta = \mathcal E\circ\mathcal L_\theta\circ \mathcal E^{-1}$. 
In the same way as in \cite[Proposition 2.1 and Section 2.2.1]{MY} we deduce that $\{Y_t : t\geq 1\}$ is a weak solution to equation (\ref{broxinhom_intro}) if and only if $\{Z_t:=\Phi_{\mathtt e}(Y_t) : t\geq 0\}$ is a weak solution to equation (\ref{diff_intro}). Then a direct application of Theorem \ref{existence_intro} gives that for all $\theta\in\Theta$, there exists a unique irreducible strongly Feller diffusion process  solution to equation (\ref{broxinhom_intro}). 

Let $\mathds Q_{y}(\theta)$ be its quenched distribution and denote by $\{R_t(\theta) : t\geq 1\}$, the time-inhomogeneous semigroup associated to $\{X_t/\sqrt{t} : t\geq 1\}$ under $\mathds Q_{y}(\theta)$, and by $\widehat {\mathds Q}_{y}$ and $\{\widehat R_t : t\geq 1\}$,  there annealed counterparts.

\subsection{Associated asymptotic behaviours}

The following two corollaries are the analogous of Theorems \ref{invariant_intro} and \ref{CG}.  We recall that $S_\lambda$ is defined in (\ref{scaling}).

\begin{coro} Assume that $\beta=1/4$. For all
 $\alpha\in(0,1)$ there exists $\lambda>0$ such that, for all $\nu \in\mathcal M_{1,U_{\alpha}}$ and $\hat \nu\in\mathcal M_{1,V_{\alpha}}$,
\begin{equation}\label{weakergodicity_intro_brox1}
\limsup_{t\to\infty}\frac{\log(\|\nu R_{t}(\theta) - \mu_{S_{\sqrt t}\,\theta}\|_{U_\alpha})}{\log t}\leq -\lambda\quad \mathcal W\mbox{-a.s.} 
\end{equation}
and 
\begin{equation}\label{weakergodicity_intro_brox2}
\lim_{t\to\infty}{\|\hat\nu \widehat R_{t} - \hat\mu \|_{V_\alpha}}=0.
\end{equation}
\end{coro}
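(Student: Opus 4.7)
The plan is to deduce the corollary from Theorem \ref{invariant_intro} via the pathwise correspondence between Brox's equation \eqref{broxinhom_intro} and the Ornstein--Uhlenbeck--type equation \eqref{diff_intro} already recalled at the beginning of Section \ref{sec:3}. With $\beta=1/4$ we have $r=\beta-1/4=0$, so Theorem \ref{invariant_intro} applies to the process $Z$ and furnishes the random quasi-invariant measure $\mu$ together with an exponential rate $\lambda>0$ of convergence in \eqref{weakergodicity_intro_1}--\eqref{weakergodicity_intro_2}.

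First I would transport the conclusions of Theorem \ref{invariant_intro} to the Brox diffusion through the change of variables $Z_s=\Phi_{\mathtt e}(Y)(s)=Y_{e^s}/e^{s/2}$. Since, by the intertwining $L_\theta=\mathcal E\circ\mathcal L_\theta\circ\mathcal E^{-1}$ mentioned in Section \ref{sec:3}, $Y$ solves \eqref{broxinhom_intro} with environment $\theta$ if and only if $Z$ solves \eqref{diff_intro} with the same environment $\theta$, and the initial condition $Y_1=y$ corresponds to $Z_0=y$, the reparameterization $s=\log t$ (valid for $t\geq 1$) yields the pathwise identity $Y_t/\sqrt t=Z_{\log t}$. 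Taking expectations under the quenched and annealed laws produces the semigroup identities
\[
R_t(\theta)=P_{\log t}(\theta),\qquad \widehat R_t=\widehat P_{\log t},\qquad t\geq 1,
\]
acting on the spaces $\mathcal M_{1,U_\alpha}$ and $\mathcal M_{1,V_\alpha}$ (the weighted norms $\|\cdot\|_{U_\alpha}$ and $\|\cdot\|_{V_\alpha}$ depend only on the marginal distribution being compared, so the identification is preserved under them).

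The corollary now follows by direct substitution. For $\nu\in\mathcal M_{1,U_\alpha}$ and $\mathcal W$-almost every $\theta$, \eqref{weakergodicity_intro_1} gives
\[
\limsup_{s\to\infty}\frac{\log\|\nu P_s(\theta)-\mu_{T_s\theta}\|_{U_\alpha}}{s}\leq -\lambda,
\]
and substituting $s=\log t$ together with $T_{\log t}\theta=S_{e^{(\log t)/2}}\theta=S_{\sqrt t}\theta$ (read off from \eqref{scaling_intro}) produces \eqref{weakergodicity_intro_brox1} with the same $\lambda$. Identically, \eqref{weakergodicity_intro_2} combined with $\widehat R_t=\widehat P_{\log t}$ gives \eqref{weakergodicity_intro_brox2}. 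I do not expect any genuine obstacle here: the whole content of the corollary is that the exponential-in-$s$ rate from Theorem \ref{invariant_intro} becomes a polynomial-in-$t$ rate (with the same exponent $\lambda$) under the logarithmic time change $t=e^s$ intrinsic to the Brox scaling, and that the quasi-invariant measure is transferred along $T_{\log t}\theta=S_{\sqrt t}\theta$.
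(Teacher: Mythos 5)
Your proposal is correct and is exactly the route the paper intends: with $\beta=1/4$ one has $r=0$, the identity $Y_t/\sqrt t=Z_{\log t}$ from the correspondence $\mathcal E$, $\Phi_{\mathtt e}$ of Section \ref{sec:3} gives $R_t(\theta)=P_{\log t}(\theta)$ and $\widehat R_t=\widehat P_{\log t}$, and substituting $s=\log t$ with $T_{\log t}\theta=S_{\sqrt t}\,\theta$ into (\ref{weakergodicity_intro_1})--(\ref{weakergodicity_intro_2}) yields the corollary with the same $\lambda$. Nothing essential is missing.
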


\begin{coro} Assume that $\beta>1/4$. For any $y\in\mathbb R$ and for $\mathcal W$-almost all $\theta\in\Theta$, the following convergence holds under the quenched distribution $\mathds Q_y(\theta)$,
\begin{equation}\label{by3}
\lim_{t\to\infty} \frac{X_t}{\sqrt t} \overset{(\dd)}{=} \mathcal N(0,1).	
\end{equation}
\end{coro}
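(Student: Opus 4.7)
The plan is to reduce the statement directly to Theorem \ref{CG} via the time-scaling bijection $\Phi_e$ and the generator conjugation $L_\theta=\mathcal E\circ\mathcal L_\theta\circ\mathcal E^{-1}$ recorded in Section \ref{sec:3}. Set $r:=\beta-1/4>0$, which is precisely the regime covered by Theorem \ref{CG}. The key observation is that, under this change of variable, a weak solution $Y$ of (\ref{broxinhom_intro}) started at time $1$ from $y$ in the environment $\theta$ is transformed into the weak solution $Z_t:=Y_{e^t}/e^{t/2}$ of (\ref{diff_intro}) started at time $0$ from $Z_0=y$ in the same environment $\theta$ and with parameter $r$. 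In particular, the quenched law of $\{Y_{e^t}/e^{t/2} : t\geq 0\}$ under $\mathds Q_y(\theta)$ equals the quenched law of $\{X_t : t\geq 0\}$ under $\mathds P_y(\theta)$.

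Concretely, writing $X$ for the canonical process, we would argue that, for every bounded continuous $f:\mathbb R\to\mathbb R$ and every $t\geq 1$,
\begin{equation*}
\mathds E^{\mathds Q_y(\theta)}\!\left[f\!\left(\tfrac{X_t}{\sqrt t}\right)\right]
= \mathds E^{\mathds P_y(\theta)}\!\left[f(X_{\log t})\right],
\end{equation*}
where on the right hand side $X$ denotes the canonical process associated with the diffusion (\ref{diff_intro}) of parameter $r>0$. Taking the limit $t\to\infty$ (so that $\log t\to\infty$) and applying Theorem \ref{CG} to the right hand side, we obtain, for $\mathcal W$-almost every $\theta$, the desired convergence in distribution
\begin{equation*}
\lim_{t\to\infty}\mathds E^{\mathds Q_y(\theta)}\!\left[f\!\left(\tfrac{X_t}{\sqrt t}\right)\right]
=\int_{\mathbb R} f(x)\,\frac{e^{-x^2/2}}{\sqrt{2\pi}}\,\dd x.
\end{equation*}
The $\mathcal W$-null exceptional set is inherited from the one in Theorem \ref{CG}, and it does not depend on $y$ nor on the test function $f$ (by separability of $\mathrm{C}_b(\mathbb R)$ in the topology of weak convergence).

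There is essentially no genuine obstacle beyond bookkeeping: the only thing to verify carefully is the matching of initial times and of the environments under the map $\mathcal E$, namely that the parameter $r$ obtained in (\ref{diff_intro}) from (\ref{broxinhom_intro}) really is $\beta-1/4$ (this is done in \cite{MY} and sketched in Section \ref{sec:3}), so that the hypothesis $\beta>1/4$ translates into the strict inequality $r>0$ required by Theorem \ref{CG}. Once this is set, the corollary is an immediate consequence of the quenched convergence (\ref{y3}) evaluated along the continuous time change $s=\log t$.
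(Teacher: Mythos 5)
Your proposal is correct and follows essentially the same route as the paper: Section \ref{sec:3} establishes that $Y$ solves (\ref{broxinhom_intro}) with $\beta=r+1/4$ if and only if $Z_t=Y_{e^t}/e^{t/2}$ solves (\ref{diff_intro}), so the corollary is read off from Theorem \ref{CG} (with $r=\beta-1/4>0$) along the time change $s=\log t$, exactly as you do. The only superfluous element is your parenthetical claim that the exceptional $\mathcal W$-null set is independent of $y$, which the statement does not require and Theorem \ref{CG} does not directly provide.
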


The scaling limits (\ref{weakergodicity_intro_brox1}), (\ref{weakergodicity_intro_brox2}) and  (\ref{by3}) are to be compared with the two convergences presented in (\ref{my1}) (the deterministic situation studied in \cite{MY}) and convergences (\ref{brox1}) and (\ref{brox2}) (the random time-homogeneous situation considered in \cite{Brox}). These results have some commons features with those presented in \cite{MY} and \cite{Brox} and also with those presented in \cite{schm,Boldri3,ZeiSz,Seppa,LOY,KO,KO2,Rhodes} concerning the quenched central limit theorem (\ref{by3}). There is still a phase transition phenomenon for $\beta=1/4$ and we obtain distinct quenched and annealed scaling limits for the critical point. Moreover, we are more accurate concerning the speed of convergence, which is polynomial here, and exponential in Theorem \ref{invariant_intro}.

Nevertheless, the case $\beta<1/4$ seems to be out of range of the present technics. In fact, we expect a stronger localization phenomenon and a subdiffusive behaviour of order $t^{2\beta}\log^2(t)$ when $\beta\geq 0$ and an almost sure convergence when $\beta<0$ (which can seen  as a generalization and mixture of results presented in (\ref{brox1}), (\ref{brox2}) and (\ref{my2})). Note that in the case where $\beta<0$, equation (\ref{broxinhom_intro}) is (via a simple change of time) a damped SDE in random environment. 

Furthermore, some methods elaborated in this paper can be used to study a similar interesting situation where we replace the Brownian environment $\theta$ in (\ref{broxinhom_intro}) by an another self-similar process. These situations are object of some works in progress. The case of a multiplicative noise or similar equations in higher dimension seems to be more difficult.


\section{Preliminaries of Theorems \ref{existence_intro} and \ref{markov_feller_intro}}
\label{sec:4}
\setcounter{equation}{0}

\subsection{Linear perturbations of equation (\ref{diff_intro})}

We consider, for any $a\in\mathbb R$, the informal SDE 
\begin{equation}\label{diff_intro_g}
\dd Z_{t} = \dd B_{t} -\frac{1}{2} \partial_{x} Q_{\theta}(t,Z_{t}) \,\dd t,\quad	Z_{s}=z\in\mathbb R,\; t\geq s\geq 0,\; \theta\in\Theta,
\end{equation}
with the more general potential than (\ref{diff_intro}) given by
\begin{equation}\label{potential_intro_g}
Q_{\theta}(t,x):=a\frac{x^{2}}{2}+e^{-rt} T_{t}\theta(x)= V_{\theta}(t,x) +\frac{a-1}{2} x^{2}.
\end{equation}
Here once again $r\in\mathbb R$ and $B$ denotes a standard Brownian motion independent of the Wiener space $(\Theta,\mathcal B,\mathcal W)$. This equation coincides with equation (\ref{diff_intro}) for $a=1$. The conditional infinitesimal generator $A_{\theta}$ and its associated domains are given as in (\ref{ig_intro}) and (\ref{domains_intro}), replacing $V_{\theta}$ by $Q_{\theta}$. Moreover, it is not difficult to check that 
\begin{equation}\label{relation_ig}
A_{\theta}=L_{\theta} - \frac{a-1}{2}\, x\frac{\partial}{\partial {x}}.	
\end{equation}
We get that the domains of $A_{\theta}$ and $L_{\theta}$ are equals, in particular, the domains of $A_{\theta}$ do not depend on $a$. A weak solution to equation (\ref{diff_intro_g}) is, in the same way as in Definition \ref{solution_intro}, a solution to the martingale problem related to $(A_{\theta},D(A_{\theta}))$. In the sequel, we set 
\begin{equation*}\label{relation_ig}
A_{\theta,t} := A_{\theta} - \frac{\partial}{\partial {t}}.
\end{equation*}


\subsection{Equivalent SDE and martingale problem}

We assume that $a>0$ and we introduce an auxiliary SDE on $\mathbb R$, which is naturally connected to equation (\ref{diff_intro_g}). Let $S_{}$ and $H_{}$ be the functions on $\Theta\times \mathbb R^2$ defined  by
\begin{multline}\label{pseudoscale}
S_{\theta}(t,x):=\int_0^x e^{Q_\theta(t,y)}{\dd y} = e^{-{t}/{2}}\int_0^{e^{{t}/{2}}x} \exp{\Big(a\frac{e^{-t}z^2}{2} - e^{-\left(r+{1}/{4}\right)t}\theta(z)\Big)}\dd z\\
\mbox{and}\quad S_{\theta}(t,H_{\theta}(t,x))=x.  
\end{multline}
Note that $H_{\theta}$ is well defined since $a>0$ and in this case, the socalled pseudo-scale function $x\longmapsto S_{\theta}(t,x)$ is an increasing bijection of $\mathbb R$. Moreover, by using the second representation of $S$, obtained by the change of time $z:=e^{{t}/{2}}y$, we can see that $S_{\theta}(t,x)$ and $H_{\theta}(t,x)$ are continuously differentiable with respect to $(t,x)\in\mathbb R^2$ and we can set
\begin{equation*}\label{coeff_sde}
\sigma_{\theta}(t,x):=(\partial_x S_{\theta})(t,H_{\theta}(t,x))\;\;\;\mbox{and}\;\;\; d_{\theta}(t,x):=(\partial_t S_{\theta})(t,H_{\theta}(t,x)).
\end{equation*}
In addition, remark that, for all $(\theta,s,t,x)\in\Theta\times\mathbb R^{3}$,
\begin{multline}\label{cocycle_coeff}
S_{\theta}(s+t,x) = S_{(e^{-rs}T_s\theta)}(t,x),\quad 
H_{\theta}(s+t,x)  = H_{(e^{-rs}T_s\theta)}(t,x),\\
\sigma_{\theta}(s+t,x) =  \sigma_{(e^{-rs} T_s\theta)}(t,x) \quad\mbox{and}\quad  d_{\theta}(s+t,x) = d_{(e^{-rs} T_s\theta)}(t,x).
\end{multline}

We can  consider, for any $\theta\in\Theta$, the  SDE on $\mathbb R$ with continuous coefficients and  driven by a standard Brownian motion $B$, independent of $(\Theta,\mathcal B,\mathcal W)$,
\begin{equation}\label{sde}
\dd {\widetilde Z}_t=\sigma_{\theta}(t,{\widetilde Z}_t)\,\dd B_t + d_{\theta}(t,{\widetilde Z}_t)\,\dd t,\quad {\widetilde Z}_s={\tilde z}\in\mathbb R,\quad t\geq s\geq 0.
\end{equation}
Let  ${\rm C}^{1,2}$ be the space of continuous functions $F(t,x)$ on $[s,\infty)\times \mathbb R$ such that $\partial_t F$, $\partial_x F$ and $\partial_{xx}^2 F$ exist and are continuous functions and  introduce
\begin{equation*}
{\widetilde A}_{\theta} := {\widetilde A}_{\theta,\,t} + \frac{\partial }{\partial t} := \left[\frac{\sigma_{\theta}^2(t,x)}{2}\frac{\partial^2}{\partial x^2} + d_{\theta}(t,x)\frac{\partial }{\partial x}\right]+\frac{\partial}{\partial t}.
\end{equation*} 
Note that $S_{\theta}$ and $H_{\theta}$ induce two bijections from the space of measurable functions on $[s,\infty)\times \mathbb R$ into itself, inverse to each other, by setting 
\begin{equation*}\label{bijection}
\mathcal S_{\theta} F (t,x):=F(t, S_{\theta}(t,x))\quad\mbox{and}\quad \mathcal H_{\theta} F (t,x):=F(t, H_{\theta}(t,x)).
\end{equation*} 
By restriction, we get that  $\mathcal S_{\theta}$ and $\mathcal H_{\theta}$ induce bijections 
\begin{multline*}
\mathcal S_{\theta} :{\rm C^{1,2}} \longrightarrow D(A_{\theta}),\quad \mathcal H_{\theta} : D(A_\theta)\longrightarrow {\rm C^{1,2}},\\
\mathcal S_{\theta} :{\rm W}^{1,2,\infty}_{\rm loc} \longrightarrow \overline D(A_\theta)\quad\mbox{and}\quad \mathcal H_{\theta} : \overline D(A_\theta)\longrightarrow {\rm W}^{1,2,\infty}_{\rm loc},
\end{multline*}
where ${\rm W}^{1,2,\infty}_{\rm loc}$ denote the Sobolev space  of continuous functions $F(t,x)$ on $[s,\infty)\times \mathbb R$ such that the partial derivatives $\partial_t F$, $\partial_x F$, $\partial_t(\partial_x F)$ and $\partial_{xx}^2 F$ exist and are locally bounded functions. Moreover, the infinitesimal generators $A_\theta$ and ${\widetilde A}_{\theta}$ are equivalent. More precisely, they satisfy
\begin{equation}\label{equivalent_op}
{\mathcal S_{\theta}}^{-1}\circ A_\theta \circ {\mathcal S_{\theta}} =  {\widetilde A}_{\theta}.\\
\end{equation}

\begin{prop}\label{equivalent_mp} For any $r\in\mathbb R$, $\theta\in\Theta$, $s\geq 0$ and $z,{\tilde z}\in\mathbb R$ such that ${\tilde z}:=S_{\theta}(s,z)$, $\{Z_{t} : t\geq s\}$ is a weak solution to equation (\ref{diff_intro_g}) if and only if $\{{\widetilde Z}_t:=S_{\theta}(t,Z_t):  t\geq s\}$ is a weak solution, up to the explosion time $\tau_{e}$, to SDE (\ref{sde}). Furthermore, there exists a unique weak solution $(\widetilde Z,B)$ and, for all $G\in {\rm W}^{1,2,\infty}_{\rm loc}$ and $s\leq t< \tau_e$,
\begin{equation}\label{Ito_sde}
G(t,{\widetilde Z}_t)=G(s,{\tilde z}) + \int_s^{t} {\widetilde A}_{\theta} G(u,{\widetilde Z}_u)\,\dd u \\
+\int_s^{t}\partial_x G(u,{\widetilde Z}_u)\,\sigma_{\theta}(u,{\widetilde Z}_u)\, \dd B_u.
\end{equation}
\end{prop}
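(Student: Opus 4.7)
The strategy is to use the operator identity (\ref{equivalent_op}) to convert the singular martingale problem for $A_\theta$ into the classical martingale problem for the regular diffusion operator $\widetilde A_\theta$, for which the one-dimensional Stroock-Varadhan correspondence with the SDE (\ref{sde}) is standard.

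The first step is to establish the equivalence of solutions via the bijection $\mathcal S_\theta$. Given $Z$ solving the martingale problem for $(A_\theta, D(A_\theta))$ with localizing sequence $(\tau_n)$, pick a test function $G \in \mathrm{C}^{1,2}$ and set $F := \mathcal S_\theta G \in D(A_\theta)$. Using (\ref{equivalent_op}) together with $F(t, Z_t) = G(t, S_\theta(t, Z_t)) = G(t, \widetilde Z_t)$ and $A_\theta F(u, Z_u) = \widetilde A_\theta G(u, \widetilde Z_u)$, the local martingale property transfers from $F(\cdot, Z)$ to $G(\cdot, \widetilde Z)$, which shows that $\widetilde Z$ solves the martingale problem for $(\widetilde A_\theta, \mathrm{C}^{1,2})$ up to $\tau_e$. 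Because $\mathcal H_\theta = \mathcal S_\theta^{-1}$ enjoys the same mapping properties, the converse implication is identical. The classical one-dimensional Stroock-Varadhan equivalence then realizes any such $\widetilde Z$ as a weak solution to (\ref{sde}) with some standard Brownian motion $B$, which is uniquely recovered from $\widetilde Z$ by $B_t - B_s = \int_s^t \sigma_\theta(u, \widetilde Z_u)^{-1}(\dd \widetilde Z_u - d_\theta(u, \widetilde Z_u)\,\dd u)$, thanks to $\sigma_\theta > 0$.

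The second step is existence and uniqueness of the weak solution to (\ref{sde}) up to explosion. The coefficient $\sigma_\theta(t, x) = \exp Q_\theta(t, H_\theta(t, x))$ is continuous and strictly positive, and $d_\theta$ is continuous. Local existence follows from Skorokhod's tightness theorem applied to truncations of the coefficients on compact rectangles $[0, T] \times [-N, N]$. For uniqueness in law I would first remove the drift by a local Girsanov transformation (the Novikov condition holds on each rectangle since $d_\theta/\sigma_\theta$ is there bounded), then invoke the Engelbert-Schmidt criterion for the driftless one-dimensional SDE $\dd Y_t = \sigma_\theta(t, Y_t)\,\dd B_t$, whose solution is unique in law because $1/\sigma_\theta^2$ is locally bounded. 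Patching the local uniqueness statements via a localization argument yields uniqueness up to $\tau_e$.

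Finally, the generalized Itô formula (\ref{Ito_sde}) for $G \in \mathrm{W}^{1,2,\infty}_{\mathrm{loc}}$ follows by a standard mollification argument: pick a sequence $G_n \in \mathrm{C}^{1,2}$ whose derivatives $\partial_t G_n$, $\partial_x G_n$, $\partial^2_{xx} G_n$ converge almost everywhere to those of $G$ while remaining locally uniformly bounded, apply the classical Itô formula to each $G_n(t \wedge \tau^N, \widetilde Z_{t \wedge \tau^N})$ where $\tau^N$ is an exit time from a compact set, and pass to the limit by dominated convergence before removing the localization. The main obstacle will be the uniqueness step: the coefficients $\sigma_\theta, d_\theta$ inherit only continuity from the Hölder regularity of Brownian sample paths, so classical pathwise uniqueness via Lipschitz bounds is unavailable, and one must genuinely exploit the one-dimensional structure together with the nondegeneracy $\sigma_\theta > 0$.
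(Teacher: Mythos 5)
Your first step (transporting the martingale problem through $\mathcal S_\theta$ via the conjugation $\mathcal S_\theta^{-1}\circ A_\theta\circ\mathcal S_\theta=\widetilde A_\theta$, in both directions, and recovering the Brownian motion from the nondegeneracy $\sigma_\theta>0$) is exactly the paper's argument. The divergence, and the problems, are in the last two steps. For uniqueness in law of (\ref{sde}) you remove the drift by a local Girsanov transformation and then invoke the Engelbert--Schmidt criterion; but Engelbert--Schmidt is a statement about the \emph{time-homogeneous} equation $\dd Y_t=\sigma(Y_t)\,\dd B_t$, whereas here $\sigma_\theta(t,x)=\exp Q_\theta(t,H_\theta(t,x))$ depends genuinely on $t$, so the criterion simply does not apply as stated. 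The fix is also the paper's route: since $\sigma_\theta$ and $d_\theta$ are continuous and $\sigma_\theta>0$, the (time-inhomogeneous) Stroock--Varadhan well-posedness theorem for continuous strictly elliptic coefficients, combined with the localization procedure of \cite[pp.~250--251]{SV}, gives existence and uniqueness up to $\tau_e$ directly, with no Girsanov step needed.

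The second gap is in your mollification proof of (\ref{Ito_sde}). For $G\in{\rm W}^{1,2,\infty}_{\rm loc}$ the mollified derivatives $\partial_t G_n$ and $\partial^2_{xx}G_n$ converge to $\partial_t G$ and $\partial^2_{xx}G$ only Lebesgue-almost everywhere in $(t,x)$, and ``dominated convergence'' does not let you conclude that
\begin{equation*}
\int_s^{t}\widetilde A_\theta G_n(u,\widetilde Z_u)\,\dd u\longrightarrow\int_s^{t}\widetilde A_\theta G(u,\widetilde Z_u)\,\dd u ,
\end{equation*}
because a priori the path $(u,\widetilde Z_u)$ could spend positive time in the exceptional null set. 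What is missing is precisely the absolute continuity of the space-time occupation measure of $\widetilde Z$, i.e.\ a Krylov-type estimate $\mathds E\int_s^t|f(u,\widetilde Z_u)|\,\dd u\leq N\|f\|_{{\rm L}^p}$ (or, equivalently, the existence of transition densities, which in the paper is only established afterwards, in Theorem \ref{markov_feller_sde}). This is exactly what the Itô--Krylov formula cited in the paper (\cite[Chapter 10]{Krylov}, \cite[p.~134]{Elworthy}) encapsulates; without it, your passage to the limit in the drift term is unjustified. (The stochastic integral term is fine, since $\partial_x G$ is locally Lipschitz, hence continuous, and $\partial_x G_n\to\partial_x G$ locally uniformly.)
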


\begin{proof} 
Assume that ${\widetilde Z}$ is a weak solution to (\ref{sde}). By using the Ito formula, $\widetilde Z$ solves the martingale problem related to $({\widetilde A}_\theta,{\rm C^{1,2}})$. Therefore, ${\widetilde Z}_s={\tilde z}$ and there exists an increasing sequence of stopping time $\{\tau_n : n\geq 0\}$ such that, for all $n\geq 0$ and $G\in{\rm C}^{1,2}$,  
\begin{equation*}
G(t\wedge \tau_n,{\widetilde Z}_{t\wedge \tau_n})-\int_s^{t\wedge \tau_n} {\widetilde A}_{\theta} G(u, {\widetilde Z}_{u})\,\dd u,\quad t\geq s,
\end{equation*}
is a local martingale, with 
\begin{equation*}
\tau_e:=\sup_{n\geq 0}\inf\big\{t\geq s : |{\widetilde Z}_t|\geq n\big\}=\sup_{n\geq 0}\tau_n.
\end{equation*}
We deduce from relation (\ref{equivalent_op}) that $\{Z_t:=H_\theta(t,{\widetilde Z}_t) : t\geq s\}$ is a weak solution to (\ref{diff_intro_g}) since $Z_s=z$, for all $n\geq 0$ and $F\in D(L_\theta)$, $G:= \mathcal H_\theta F\in {\rm C}^{1,2}$, and
\begin{equation*}
F(t\wedge \tau_n,Z_{t\wedge \tau_n})-\int_s^{t\wedge \tau_n}  A_\theta F(u, Z_u)\,\dd u\\ 
= G(t\wedge \tau_n,{\widetilde Z}_{t\wedge \tau_n})-\int_s^{t\wedge  \tau_n} {\widetilde A}_\theta G(u, {\widetilde Z}_u)\,\dd u.
\end{equation*}

A similar reasoning allow us to show that if $ Z$ is a weak solution to (\ref{diff_intro_g}) then $\{{\widetilde Z}_t:=S_\theta(t, Z_t):t\geq s\}$ is a weak solution to (\ref{sde}). Moreover, equation (\ref{sde}) has continuous coefficients $\sigma_\theta$ and $d_\theta$ and is strictly elliptic ($\sigma_\theta>0$) and we deduce, by using classical arguments of localization (see, for instance, \cite[pp. 250-251]{SV}), that there exists a unique weak solution $({\widetilde Z},B)$. Furthermore, by using the Ito-Krylov formula (see, for instance, \cite[Chapter  10]{Krylov} or \cite[p. 134]{Elworthy}), we obtain (\ref{Ito_sde}).
\end{proof}



\subsection{Chain rules and nonexplosion}
 
To construct Lyapunov functions for the infinitesimal generator $L_\theta$, or more generally for $A_{\theta}$ associated to (\ref{diff_intro_g}), we need to give the associated chain rules. 
For all $\theta\in\Theta$ and $\varphi\in{\rm W}^{1,\infty}_{\rm loc}$ (the space of real continuous functions such
that the partial derivatives in the sense of distributions exist and are locally bounded functions) define  
\begin{equation}\label{domainfunction}
F^{\varphi}_\theta(t,x):=\int_0^x \exp{\big[e^{-rt}\,T_t\theta(y)\big]} \varphi(t,y)\,\dd y\in  \overline D(A_\theta).
\end{equation}
By standard computations, we get the following chain rules
\begin{equation}\label{CR0}
A_{\theta,t} F^\varphi_\theta(t,x)=\frac{1}{2}\exp\big[{e^{-rt}\,T_t\theta(x)}\big]\,\big(\partial_x\varphi(t,x) - a x \varphi(t,x)\big),
\end{equation}
and 
\begin{multline}\label{CR}
\partial_t F^\varphi_\theta(t,x)= \frac{1}{2}\exp{\big[e^{-rt}\,T_t\theta(x)\big]} \,x \,\varphi(t,x) - \frac{1}{2} F_\theta^\varphi(t,x) \\ +\int_0^x \exp\big[{e^{-rt}\,T_t\theta(y)}\big]\Big(\partial_t\varphi(t,y)-\frac{y}{2}\partial_x\varphi(t,y) \Big)\,\dd y\\
-\Big(r+\frac{1}{4}\Big)\int_0^x \exp\big[{e^{-rt}\,T_t\theta(y)}\big]\big(e^{-rt}\,T_t\theta(y)\big)\varphi(t,y)\,\dd y.
\end{multline}

\begin{prop}\label{nonexplosion} Assume that $a>1$. For any $r\in\mathbb R$, $\theta\in\Theta$, $s\geq 0$ and $z\in\mathbb R$, any weak solution $Z$ to (\ref{diff_intro_g}) is global and, for all $T>s$ and $0<\beta<(a-1)/2$,
\begin{equation}\label{moment_exp_explo}
\mathds E\Big[\exp{\Big(\beta \sup_{s \leq t\leq T} Z_{t}^{2}\Big)}\Big]<\infty.	
\end{equation}
\end{prop}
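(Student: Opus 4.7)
Given the target exponent $\beta\in(0,(a-1)/2)$ in (\ref{moment_exp_explo}), my plan is to build a Lyapunov function in the generalized domain $\overline D(A_{\theta})$, apply the It\^o formula transported from the SDE via Proposition \ref{equivalent_mp}, and combine Gronwall (for non-explosion) with Doob's maximal inequality for positive supermartingales (for the exponential moment). Pick an auxiliary parameter $\beta^{*}\in(\beta,(a-1)/2)$ and set
\[
\varphi(t,y):=y\,e^{\beta^{*}y^{2}}\in {\rm W}^{1,\infty}_{\rm loc},\qquad F:=F^{\varphi}_{\theta}\in\overline D(A_{\theta}),
\]
as in (\ref{domainfunction}). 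Writing $g_{t}(y):=e^{-rt}T_{t}\theta(y)$ and $\Psi(t,y):=\exp(\beta^{*}y^{2}+g_{t}(y))$, the sub-quadratic growth of any $\theta\in\Theta$ gives, for every $T>s$ and every $\epsilon_{0}>0$, a constant $K_{T}=K_{T}(\theta,r,\epsilon_{0})$ with $|g_{t}(y)|\le\epsilon_{0}y^{2}+K_{T}$ on $[s,T]\times\mathbb R$. This yields the two-sided comparison
\[
c_{T}\bigl(e^{(\beta^{*}-\epsilon_{0})x^{2}}-1\bigr)\le F(t,x)\le C_{T}\bigl(e^{(\beta^{*}+\epsilon_{0})x^{2}}-1\bigr),\qquad(t,x)\in[s,T]\times\mathbb R.
\]

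Plugging $\varphi$ into the chain rules (\ref{CR0}) and (\ref{CR}) (with $\partial_{t}\varphi=0$) and regrouping gives
\[
A_{\theta}F(t,x)=\tfrac{1}{2}\Psi(t,x)(1-\kappa x^{2})-F(t,x)-\beta^{*}\!\int_{0}^{x}\!y^{3}\Psi(t,y)\,\dd y-(r+\tfrac{1}{4})\!\int_{0}^{x}\!y\,g_{t}(y)\Psi(t,y)\,\dd y,
\]
where $\kappa:=a-1-2\beta^{*}>0$. Using the $g_{t}$-bound, the last (environment-dependent) integral is bounded in absolute value by $|r+\tfrac{1}{4}|\epsilon_{0}\int_{0}^{|x|}\!y^{3}\Psi\,\dd y+|r+\tfrac{1}{4}|K_{T}F(t,x)$; choosing $\epsilon_{0}$ small enough so that $|r+\tfrac{1}{4}|\epsilon_{0}<\beta^{*}$ and $\beta^{*}-\epsilon_{0}>\beta$, the $y^{3}$-integrals cancel with a negative residual. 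Since $\tfrac{1}{2}\Psi(1-\kappa x^{2})$ is bounded above on $\{|x|\le 1/\sqrt{\kappa}\}$ and nonpositive outside, this yields the Lyapunov inequality
\[
A_{\theta}F(t,x)\le C_{1}+C_{2}F(t,x),\qquad(t,x)\in[s,T]\times\mathbb R,
\]
for nonnegative constants $C_{1},C_{2}$ depending on $T,\theta,r,a,\beta^{*}$.

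With this inequality, applying It\^o to $F(\cdot,Z_{\cdot})$ up to $\tau_{n}=\inf\{t\ge s:|Z_{t}|\ge n\}$ (the integrand $\partial_{x}F$ is bounded on $[s,T]\times[-n,n]$, so the stopped stochastic integral is a true martingale), taking expectations, and invoking Gronwall yields $\mathds E F(T\wedge\tau_{n},Z_{T\wedge\tau_{n}})\le(F(s,z)+C_{1}T)e^{C_{2}T}$ uniformly in $n$. Combined with the lower bound on $F$, this gives $\mathds P(\tau_{n}\le T)\lesssim e^{-(\beta^{*}-\epsilon_{0})n^{2}}\to 0$, so $\tau_{e}=\infty$ almost surely. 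For the exponential moment, I consider the positive local supermartingale $\widetilde N_{t}:=e^{-C_{2}(t-s)}(F(t,Z_{t})+C_{1}/C_{2})$ (whose drift is nonpositive by the Lyapunov inequality); after localizing at $\tau_{n}$ and passing to the limit thanks to non-explosion, Doob's maximal inequality gives $\mathds P(\sup_{[s,T]}\widetilde N\ge\lambda)\le\widetilde N_{s}/\lambda$, which translates through the lower bound on $F$ into the tail estimate $\mathds P(\sup_{s\le t\le T}Z_{t}^{2}\ge u)\lesssim e^{-(\beta^{*}-\epsilon_{0})u}$. Since $\beta^{*}-\epsilon_{0}>\beta$, the layer-cake formula closes the argument.

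The delicate point is the chain-rule computation in the second step: the environment-dependent integral $(r+\tfrac{1}{4})\int y\,g_{t}\Psi$ has no definite sign and must be absorbed by $-\beta^{*}\int y^{3}\Psi$ up to a small loss $\epsilon_{0}$. This requires simultaneously the strict confining gap $a-1-2\beta^{*}>0$ and the sub-quadratic growth of $\theta\in\Theta$, and pins down the sharp threshold $(a-1)/2$ in the statement.
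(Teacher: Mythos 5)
Your proof is correct and follows essentially the same route as the paper: the function $F^{\varphi}_{\theta}$ with $\varphi(y)=y\,e^{\beta^{*}y^{2}}$ is, up to the reparametrization $\alpha=2\beta^{*}$ and an additive constant, exactly the paper's Lyapunov function $U_{\theta,\alpha}$, and both arguments use the chain rules (\ref{CR0})--(\ref{CR}), the sub-quadratic growth of $\theta$ to absorb the environment term, a Khasminskii-type localization for nonexplosion (the paper cites \cite[Theorem 10.2.1]{SV} where you spell out Gronwall), and the positive-supermartingale maximal inequality to get the tail bound yielding (\ref{moment_exp_explo}) for every $\beta<(a-1)/2$.
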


\begin{proof} 
Let $0<\alpha<a-1$ and $U_\alpha$ be the function defined in (\ref{lyapunovfunctions_intro}) and set
\begin{equation*}\label{lyapunovnonexplosion}
U_{\theta,\alpha}(t,x):=1+\int_0^x \exp{\big[e^{-rt}\,T_t\theta(y)\big]} U_\alpha^\prime(y)\,\dd y \in \overline D(A_\theta).  
\end{equation*}  
We shall prove that $U_{\theta}$ is a Lyapunov function, in the sense that, for all $T>s$, there exists $\lambda>0$ such that, for all $0\leq t\leq T$ and $x\in\mathbb R$,
\begin{equation}\label{lyapunov_nonexplosion}
A_\theta U_{\theta,\alpha}(t,x) \leq \lambda U_{\theta,\alpha}(t,x) \quad\mbox{and}\quad \lim_{|x|\to\infty}\inf_{0\leq t\leq T} U_{\theta,\alpha}(t,x)=\infty.
\end{equation}

First note that the second relation in (\ref{lyapunov_nonexplosion}) is clear since $\lim_{|x|\to\infty}\theta(x)/x^2=0$. Moreover, by using (\ref{CR}) and (\ref{CR0}), we can see that
\begin{equation}\label{calcule1}
A_{\theta,t} U_{\theta,\alpha}(t,x)=-\frac{1}{2} \alpha(a-\alpha)\left(1-\frac{1}{(a-\alpha)x^2}\right)x^2 \exp{\big[e^{-rt}T_t\theta(x)\big]}\,U_\alpha(x)
\end{equation}
and
\begin{multline}\label{partialderivative_t_nonexplosion}
\partial_t U_{\theta,\alpha}(t,x)= \frac{1}{2}\alpha x^{2}\exp{\big[e^{-rt}\,T_t\theta(x)\big]} U_\alpha(x) -\frac{1}{2}(U_{\theta,\alpha}(t,x)-1)\\
 -\int_0^x \exp{\big[e^{-rt}\,T_t\theta(y)\big]} \Big(\frac{\alpha y^2 +1}{2}\Big) U_{\alpha}^{\prime}(y)\,\dd y \hskip1.2cm\\
-\Big(r+\frac{1}{4}\Big)\int_0^x  \exp\big[e^{-rt}\,T_t\theta(y)\big] \big(e^{-rt}\,T_t\theta(y)\big) U_\alpha^\prime(y)\,\dd y.
\end{multline}
In addition, since $0<\alpha<a-1$, we can write for $x$ sufficiently large,
\begin{equation}\label{calcule3}
-\frac{1}{2}\alpha(a-\alpha)\left(1-\frac{1}{(a-\alpha)x^2}\right) +\frac{1}{2}\alpha <0.
\end{equation}
Then we get from (\ref{calcule3}) and (\ref{calcule1}) that there exist $L_{1}>0$ and a compact set $C$ such that, for all $0\leq t\leq T$ and $x\in\mathbb R$, 
\begin{equation}\label{calcule4}
A_{\theta,t} U_{\theta,\alpha}(t,x) + \frac{1}{2} \alpha x^2 \exp\big[e^{-rt}\,T_t\theta(x)\big]\, U_\alpha(x)\leq L_{1}\,{\mathds 1}_{C}(x)\leq L_{1}\, U_{\theta,\alpha}(t,x).
\end{equation}
Besides, we can see that there exists $L_{2}>0$ such that, for all $0\leq t\leq T$ and $y\in\mathbb R$,
\begin{equation}\label{calcule2}
\frac{\alpha y^2+1}{2} + \Big(r+\frac{1}{4}\Big) e^{-rt}\,T_t\theta(y) \geq \frac{\alpha y^2}{4} -L_{2}.
\end{equation}
We deduce from (\ref{calcule2}), (\ref{calcule4}) and (\ref{partialderivative_t_nonexplosion}) that (\ref{lyapunov_nonexplosion}) is satisfied with $\lambda := L_{1}+ L_{2}$. By using a classical argument (see, for instance, \cite[Theorem 10.2.1]{SV}) we get that the explosion time  is infinite a.s. Furthermore, the right hand side of (\ref{lyapunov_nonexplosion}) implies that $\{ e^{-\lambda t} U_{\theta,\alpha}(t,Z_{t}) : s\leq t\leq T\}$ is a positive supermartingale. By using the  maximal inequality (obtain from the optional stopping theorem) we get that, for all $R>0$,
\begin{equation}\label{doob}
R\, \mathds P\left(\sup_{s\leq t\leq T} e^{-\lambda t} U_{\theta,\alpha}(t,Z_{t}) \geq R \right) \leq e^{-\lambda s} U_{\theta,\alpha}(s,z).
\end{equation} 
Besides, we can check that, for all $\beta<\alpha/2$, there exists $c>0$ such that, for all $s\leq t\leq T$ and $x\in\mathbb R$, $c\,U_{\theta,\alpha}(t,x)\geq \exp(\beta x^{2})$. Then by using (\ref{doob}), we obtain  
\begin{equation*}
\mathds P\Big(\sup_{s\leq t\leq T} Z_{t}^{2} \geq R\Big) \leq c\,e^{\lambda (T-s)} U_{\theta,\alpha}(s,z) \exp(-\beta R).
\end{equation*}
Since $\beta$ and $\alpha$ are arbitrary parameters satisfying $\beta<\alpha/2<(a-1)/2$, we deduce from the last inequality that (\ref{moment_exp_explo}) holds for any $\beta<(a-1)/2$.
\end{proof}

\section{Proof of Theorem \ref{existence_intro}}
\label{sec:5}
\setcounter{equation}{0}

Theorem \ref{existence_intro} will be a direct consequence of Theorem \ref{nonexplosion_f} below.

\begin{thmm}\label{nonexplosion_f}
For any $a,r\in\mathbb R$, $\theta\in\Theta$, $s\geq 0$ and $z\in\mathbb R$, there exists a unique global weak solution to equation (\ref{diff_intro_g}). Moreover, there exists a standard Brownian motion $B$ such that, for all $F\in \overline D(A_{\theta})$,
\begin{equation}\label{ito_diff_intro_g}
F(t ,Z_t) = F(s,z)+\int_s^{t} A_{\theta} F(u,Z_u)\,\dd u +\int_s^{t}\partial_x F(u,Z_u)\,\dd B_u,\quad t\geq s.\\
\end{equation}
\end{thmm}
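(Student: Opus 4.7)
My plan is to reduce to the case $a>1$, for which Propositions \ref{equivalent_mp} and \ref{nonexplosion} already give existence, uniqueness, nonexplosion and the Itô formula, by means of a Girsanov transformation exploiting the fact that, according to (\ref{potential_intro_g}), the drift in (\ref{diff_intro_g}) depends on $a$ only through the linear term $-\tfrac{a-1}{2}x$. Fix $a,r\in\mathbb R$, $\theta\in\Theta$, $s\geq 0$, $z\in\mathbb R$, and pick $a'>\max(1,a)$. By Propositions \ref{equivalent_mp} and \ref{nonexplosion}, equation (\ref{diff_intro_g}) with parameter $a'$ admits a unique global weak solution $(Z,B')$ on $[s,\infty)$ starting from $z$, enjoying the exponential moment bound (\ref{moment_exp_explo}). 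Since
\[-\tfrac{1}{2}\partial_xQ^{(a)}_\theta(t,x) = -\tfrac{1}{2}\partial_xQ^{(a')}_\theta(t,x) + \tfrac{a'-a}{2}x,\]
the process $\widetilde B_t := B'_t - \int_s^t \tfrac{a'-a}{2} Z_u\,\dd u$ is expected to be a Brownian motion under the measure $\mathds Q$ with density
\[D_T := \exp\!\left(\int_s^T \tfrac{a'-a}{2}Z_u\,\dd B'_u - \tfrac{1}{2}\int_s^T \bigl(\tfrac{a'-a}{2}\bigr)^{\!2}Z_u^2\,\dd u\right),\]
and then $Z$ automatically becomes a global weak solution under $\mathds Q$ to (\ref{diff_intro_g}) with parameter $a$ driven by $\widetilde B$.

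To prove that $D$ is a true martingale, pick $\beta<(a'-1)/2$ and subdivide any fixed horizon $[s,T]$ into cells $s=t_0<\dots<t_N=T$ satisfying $(a'-a)^2(t_{k+1}-t_k)/8<\beta$. Bounding $\int_{t_k}^{t_{k+1}}Z_u^2\,\dd u\leq (t_{k+1}-t_k)\sup_{t_k\leq u\leq t_{k+1}}Z_u^2$ and applying (\ref{moment_exp_explo}), valid for any starting position, to the diffusion restarted at time $t_k$ via the strong Markov property (which holds for $Z$ by Proposition \ref{equivalent_mp} and the uniqueness it provides), shows that Novikov's criterion for $D_{t_{k+1}}/D_{t_k}$ is satisfied conditionally on $\mathcal F_{t_k}$ almost surely. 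Iterating yields the martingale property of $D$ on $[s,T]$, and thus on $[s,\infty)$ since $T$ was arbitrary. Uniqueness in distribution of the weak solution for parameter $a$ follows by the symmetric Girsanov transformation, which transports any such solution back to a weak solution for $a'$, the latter being unique.

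For the Itô formula (\ref{ito_diff_intro_g}), note first that $\overline D(A_\theta)$ does not depend on $a$ by (\ref{relation_ig}). Applying Proposition \ref{equivalent_mp} to $G:=\mathcal H_\theta F\in \mathrm W^{1,2,\infty}_{\mathrm{loc}}$ under the $a'$-dynamics, together with the chain-rule identities $\mathcal S_\theta^{-1}\circ A^{(a')}_\theta\circ\mathcal S_\theta = \widetilde A_\theta$ and $\partial_x G\cdot \sigma_\theta=(\partial_x F)\circ H_\theta$, produces the Itô representation for $F$ driven by $B'$. Substituting $A^{(a')}_\theta F = A^{(a)}_\theta F - \tfrac{a'-a}{2}\,x\,\partial_xF$ and $\dd B'_u = \dd\widetilde B_u + \tfrac{a'-a}{2}Z_u\,\dd u$ makes the two extra terms cancel exactly, yielding (\ref{ito_diff_intro_g}) for parameter $a$ with Brownian motion $B:=\widetilde B$.

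The main obstacle is the interplay between the two competing constraints controlling the Girsanov density: (\ref{moment_exp_explo}) only holds for $\beta<(a'-1)/2$, while Novikov's criterion requires $(a'-a)^2(T-s)/8<\beta$. No single choice of $a'$ satisfies both on an arbitrarily large horizon, which forces the subdivision of $[s,T]$ into short cells and the patching via the strong Markov property described above; this is the genuinely new ingredient compared with the time-homogeneous treatments of \cite{Brox,RussoTrutnau,FlanRussoWolf}.
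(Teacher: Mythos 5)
Your overall route is the paper's: reduce to a parameter $a'>1$ via a Girsanov change of measure, verify Novikov's criterion through the exponential moment bound (\ref{moment_exp_explo}), and transfer the It\^o formula via $A^{(a')}_\theta=A^{(a)}_\theta-\frac{a'-a}{2}\,x\,\partial_x$. Your forward construction (subdividing $[s,T]$ and applying Novikov conditionally from each gridpoint, using the Markov property of the $a'$-solution and the fact that (\ref{moment_exp_explo}) holds from every initial state) is a sound variant of the paper's, which instead applies Novikov only on a short horizon $T<(a'-1)/k^2$, $k=(a'-a)/2$, and then obtains globality by combining uniqueness with the Markov property; both patchings work, yours needing in addition the (standard) remark that the consistent family of measures produced by the martingale $D$ on the $\mathcal F_T$'s defines a single global weak solution.

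The gap is in your uniqueness step. You assert that ``the symmetric Girsanov transformation transports any such solution back to a weak solution for $a'$.'' For an arbitrary weak solution with parameter $a\leq 1$ you have, a priori, no analogue of (\ref{moment_exp_explo}) and not even nonexplosion --- these are precisely the properties being established --- so the reverse density $\exp\bigl(-\int k Z_u\,\dd \widetilde B_u-\tfrac12\int k^2 Z_u^2\,\dd u\bigr)$ is only a nonnegative local martingale, hence possibly a strict supermartingale, and the ``transported measure'' need not be a probability measure; the global reverse Girsanov step is therefore unjustified as stated. The repair is the one the paper uses: localize, i.e.\ stop at $\tau_n\wedge T$ with $\tau_n$ the exit time from $[-n,n]$, where the drift perturbation $kZ$ is bounded so Girsanov applies with no integrability issue; this yields a bijection between the solutions of the stopped martingale problems for the parameters $a$ and $a'$, whence uniqueness (up to explosion) for $a$ follows from uniqueness for $a'$, and together with your constructed nonexplosive solution this gives the theorem. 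This is exactly what the paper's phrase ``the Girsanov transformation induces, by localization, a linear bijection between the weak solutions'' refers to. With that localization inserted, your argument is complete and essentially coincides with the paper's proof.
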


\begin{proof} First of all, when $a>1$, the proof is a direct consequence of Propositions (\ref{nonexplosion}) and (\ref{equivalent_mp}). More generally than relation (\ref{relation_ig}), we note that, for any $a_{1},a_{2}\in\mathbb R$, 
\begin{equation*}
 A_{\theta}^{(1)} =  A_{\theta}^{(2)} - \frac{a_{1}-a_{2}}{2} x \frac{\partial }{\partial x}, 
\end{equation*}	
where $A^{(i)}$ denotes the infinitesimal generator associated to $a_{i}$, $i\in\{1,2\}$. By using this relation, it is not difficult to see that the Girsanov transformation induces, by localization, a linear bijection between the weak solutions associated to parameters  $a_{1}$ and $a_{2}$. Since for all $a_{2}>1$ there exists a unique weak solution, we obtain that, for all $a_{1}\leq 1$ there exists a unique weak solution. Therefore, to complete the proof, it suffices to show that there exists a global weak solution. Remark that since uniqueness holds for the martingale problems, any weak solution is a Markov process.

Let $a_{1}\leq 1<a_{2}$ be and consider for $a_{2}$ a global weak solution $(Z,W)$ on a given filtered probability space $(\Omega,\mathcal F,\mathds P_{2})$. We set $k:=(a_{2}-a_{1})/2$ and, for all $t\geq s$,
\begin{equation*}
 D_{t}:=\exp\left(\int_{s}^{t} k\, Z_{u}\,\dd W_{u} -\frac{1}{2}\int_{s}^{t} k^{2}\,Z_{u}^{2}\,\dd u\right)\quad\mbox{and}\quad B_{t}:=W_{t}-W_{s}-\int_{s}^{t} k\, Z_{u}\, \dd u.	
\end{equation*} 
By using the moment inequality (\ref{moment_exp_explo}) and the Novikov criterion, we can see that  $\{ D_{t} : s\leq t\leq s+T\}$ is a martingale for any $0<T<(a_{2}-1)/k^{2}$. The Girsanov theorem applies and $\{B_{t} : s\leq t\leq s+T\}$ is a standard Brownian motion under the probability measure $\mathds P_{1}$, defined by the Radon-Nykodym derivatives 
\begin{equation*}
\dd {\mathds P_{1}}_{|\mathcal F_{t}} := D_{t}\; \dd {\mathds P_{2}}_{|\mathcal F_{t}},\quad s\leq t\leq s+T. 	
\end{equation*}
Moreover, for all $F\in \overline D( A_{\theta}^{(2)})\equiv \overline D(A_{\theta}^{(1)})$ and $s\leq t\leq s+T$,
\begin{multline*}
F(t ,Z_t) = F(s,z)+\int_s^{t} A_{\theta}^{(2)} F(u,Z_u)\,\dd u +\int_s^{t}\partial_x F(u,Z_u)\,\dd W_u\\
= F(s,z)+\int_s^{t} A_{\theta}^{(1)} F(u,Z_u)\,\dd u +\int_s^{t}\partial_x F(u,Z_u)\,\dd B_u.
\end{multline*} 
Then $\{(Z_{t},B_{t}) : s\leq t\leq s+T\}$ is a weak solution, which does not explode, on the filtered probability space $(\Omega,\mathcal F,\mathds P_{1})$. Since the life time $T$ is independent on the initial state $(s,z)$, we deduce by using the Markov property that the unique weak solution associated to $a_{1}$ is global. This completes the proof.
\end{proof}


\section{Proof of Theorem \ref{markov_feller_intro}}
\label{sec:6}
\setcounter{equation}{0}

We first show that it suffices to prove the analogous theorem for the more general equivalent SDE (\ref{sde}) (see Theorem \ref{markov_feller_sde}). Thereafter, we prove this theorem.

Let ${\widetilde {\mathds P}}_{s,{\tilde z}}(\theta)$ be the distribution of the global weak solution to the SDE (\ref{sde}), which existence is stated in Proposition (\ref{markov_feller_sde}), and denote by ${\widetilde P}_\theta(s,{\tilde z};t,\dd x)$ and ${\widetilde P}_{s,t}(\theta)$ the associated transition kernels and Markov kernels.

\begin{thmm}\label{markov_feller_sde} For any $r\in\mathbb R$ and all $\theta\in\Theta$,  the  family $\{{\widetilde {\mathds P}}_{s,{\tilde z}}(\theta) : s\geq 0,\, {\tilde z}\in\mathbb R\}$ is strongly Feller continuous. Moreover, the associated time-inhomogeneous semigroups $\{{\widetilde P}_{s,t}(\theta): t\geq s\geq 0,\,\theta\in\Theta\}$ satisfy 
\begin{equation}\label{cocycle_sde}
{{\widetilde P}}_{s,s+t}(\theta)={{\widetilde P}}_{0,t}(e^{-rs}T_s\theta)\quad\mbox{and}\quad {{\widetilde P}}_{0,s+t}(\theta)={{\widetilde P}}_{0,s}(\theta){{\widetilde P}}_{0,t}(e^{-rs}T_s\theta).
\end{equation}
Besides, ${{\widetilde P}}_\theta(s,{\tilde z};t,\dd x)$ admits a density ${\tilde p}_\theta(s,{\tilde z};t,x)$, which is measurable with respect to  $(\theta,s,t,{\tilde z},x)$ on $\Theta\times \{t>s\geq 0\}\times \mathbb R^{2}$, and which satisfies the lower local Aronson estimate: for all $\theta\in\Theta$, $T>0$ and compact set $C\subset \mathbb R$, there exists $M>0$ such that, for all $0\leq s<t\leq T$ and ${\tilde z},x \in C$,
\begin{equation}\label{lowerlocalaronson_sde}
{\tilde p}_{\theta}(s,{\tilde z};t,x)\geq \frac{1}{\sqrt{M(t-s)}}\exp{\left(-M \frac{|{\tilde z}-x|^{2}}{t-s}\right)}. 
\end{equation}
\end{thmm}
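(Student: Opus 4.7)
The plan is to deduce the four assertions in order, viewing (\ref{sde}) as a strictly elliptic time-inhomogeneous diffusion and combining classical parabolic-PDE estimates with the coefficient cocycle (\ref{cocycle_coeff}).

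Existence and weak uniqueness of the global solution are already provided by Proposition \ref{equivalent_mp}, so $\{\widetilde{\mathds P}_{s,\tilde z}(\theta)\}$ is well-defined. The semigroup identities (\ref{cocycle_sde}) follow from weak uniqueness together with (\ref{cocycle_coeff}): if $\widetilde Z$ solves (\ref{sde}) with environment $\theta$ starting from $(s,\tilde z)$, then $t\mapsto\widetilde Z_{s+t}$ solves the same SDE with environment $e^{-rs}T_s\theta$ starting from $(0,\tilde z)$. Both processes therefore have the same law, giving the first identity, and the Markov property of $\widetilde Z$ then yields the second.

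Because $\sigma_\theta$ and $d_\theta$ are continuous with $\sigma_\theta>0$, the operator $\widetilde A_\theta$ is strictly elliptic with continuous coefficients which are bounded and uniformly elliptic on each slab $[0,T]\times[-R,R]$. Classical parametrix constructions (\`a la Friedman, \emph{Partial Differential Equations of Parabolic Type}, or the probabilistic counterpart in Stroock--Varadhan) then produce a jointly continuous fundamental solution $\tilde p_\theta^R$ for the process killed at exit from $[-R,R]$, depending measurably on $\theta$ through the evident continuity of the map $\theta\mapsto(\sigma_\theta,d_\theta)$ on compact subsets of $[0,T]\times\mathbb R$ (continuity is a direct consequence of (\ref{pseudoscale}) once $\theta$ is taken in the topology of uniform convergence on compact sets). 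Letting $R\to\infty$ and invoking nonexplosion yields the density $\tilde p_\theta$ with the required joint measurability; strong Feller continuity then follows by dominated convergence from the joint continuity of $\tilde p_\theta$ in $(\tilde z,x)$.

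For the lower local Aronson estimate (\ref{lowerlocalaronson_sde}), I would fix $\theta$, $T>0$ and a compact $C\subset\mathbb R$, and choose a strictly larger compact $C'\supsetneq C$. On $[0,T]\times C'$ the coefficients $\sigma_\theta^2,d_\theta$ are bounded with $\sigma_\theta^2$ bounded away from zero, so the classical Gaussian lower bound for strictly elliptic operators with bounded continuous coefficients (Aronson, Fabes--Stroock) applies to the diffusion killed at exit from $C'$, giving (\ref{lowerlocalaronson_sde}) for the killed density and \emph{a fortiori} for $\tilde p_\theta$ on $C\times C$. The main obstacle I anticipate is keeping the constant $M$ uniform in $0\le s<t\le T$ and $\tilde z,x\in C$ while accommodating the fact that the coefficients are only locally bounded: this forces a careful chaining through short-time transition kernels and exploits that the killed and unkilled processes coincide with overwhelming probability on time scales $t-s\le T$ when $C'$ is chosen sufficiently large relative to $C$.
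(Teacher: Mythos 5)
There is a genuine gap in your treatment of the density bounds. For a general $\theta\in\Theta$ the coefficients $\sigma_\theta$ and $d_\theta$ of (\ref{sde}) are merely continuous (no H\"older or Dini modulus is available, since $\Theta$ only imposes continuity and the growth condition), and the operator $\widetilde A_{\theta,t}=\tfrac{1}{2}\sigma_\theta^2\partial_{xx}+d_\theta\partial_x$ is \emph{not} of divergence form. Consequently neither Friedman's parametrix construction (which needs H\"older coefficients) nor the Aronson/Fabes--Stroock lower Gaussian bounds (which are proved for uniformly elliptic \emph{divergence-form} operators with measurable coefficients) applies directly, as you assert; this is precisely the obstruction the paper has to work around. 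The paper's route is: truncate the coefficients in $(t,x)$, then perform the explicit one-dimensional change of scale
\begin{equation*}
k_{\theta}^{({\delta})}(t,x):=\int_0^x{\frac{1}{(\sigma_{\theta}^{({\delta})}(t,y))^{2}}\exp\left({2\int_0^y \frac{d_{\theta}^{({\delta})}(t,z)}{(\sigma_{\theta}^{({\delta})}(t,z))^{2}}\,\dd z}\right)\,\dd y},
\end{equation*}
which conjugates the truncated operator to a uniformly elliptic divergence-type operator with bounded measurable coefficients; Aronson's (or Porper's) global two-sided estimates then hold for the transformed fundamental solution, and the local estimate is pulled back through the locally Lipschitz map $k_\theta^{(\delta)}$. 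Only then does a chaining argument in the spirit of Stroock (Theorem II.1.3) give the lower bound for the killed density, and $\tilde p\geq\tilde p_\delta$ concludes. Your final paragraph correctly anticipates the chaining/killing step, but the divergence-form reduction, which is the key idea making any Aronson-type input legitimate here, is missing, and without it the claimed lower bound is unsupported for non-H\"older environments.

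A secondary weak point: joint measurability of $\tilde p_\theta(s,\tilde z;t,x)$ in $(\theta,s,t,\tilde z,x)$ does not follow from the ``evident continuity'' of $\theta\mapsto(\sigma_\theta,d_\theta)$ alone. The paper proves it by first establishing continuity of $(\theta,s,\tilde z)\mapsto\widetilde{\mathds E}_{s,\tilde z}(\theta)[G]$ for bounded continuous $G$ via the stability theorem for martingale problems, upgrading this to measurability of $(\theta,s,\tilde z,t)\mapsto\widetilde{\mathds E}_{s,\tilde z}(\theta)[F(X_t)]$ for bounded measurable $F$ through tightness and Lipschitz approximation, and then extracting measurability of the density itself by a disintegration-of-measures argument. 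Some argument of this kind is needed; your $R\to\infty$ limit of kernels for the killed processes does not by itself produce a jointly measurable version of the density. The remaining parts of your proposal (existence and uniqueness from Proposition \ref{equivalent_mp}, and the cocycle identities (\ref{cocycle_sde}) from weak uniqueness, the Markov property and (\ref{cocycle_coeff})) agree with the paper and are fine.
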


Denote by ${\mathds P}_{s,z}(\theta)$ the distribution of the unique global weak solution to  the equation (\ref{diff_intro_g}), which is given in Theorem \ref{nonexplosion_f}, and by $P_\theta(s,z;t,\dd x)$ and $ P_{s,t}(\theta)$ the associated transition kernels and Markov kernels. Assume first that $\{{\widetilde P}_{s,{\tilde z}}(\theta) : s\geq 0,\, {\tilde z}\in\mathbb R\}$ is strongly Feller continuous. One get by using Proposition \ref{equivalent_mp} that, for all bounded measurable function $F$ on $[0,\infty)\times \mathbb R$, $t\geq s\geq 0$ and $z\in\mathbb R$, 
\begin{equation*}
{\mathds E}_{s,z}(\theta)[F(t,X_t)]=
{{\widetilde {\mathds E}}}_{s,S_\theta(s,z)}(\theta)[F(t,H_\theta(t,X_t))].
\end{equation*}
Since $S_\theta$ is continuous on $\mathbb R^2$, we deduce that $\{ {\mathds P}_{s,z}(\theta) : s\geq 0,\, z\in\mathbb R\}$ is also strongly Feller continuous. Secondly, assume that $\{{\widetilde P}_{s,t}(\theta) : t\geq s\geq 0\}$ satisfies relations (\ref{cocycle_sde}). We get from (\ref{cocycle_coeff}) and  Proposition \ref{equivalent_mp}  that, for all nonnegative function $F$ on $ \mathbb R$, $s,t\geq 0$ and $z\in\mathbb R$,
\begin{multline*}
 P_{s,s+t}(\theta) F(z)  = {{\widetilde P}}_{s,s+t}(\theta)[F(H_\theta(s+t,\ast))](S_\theta(s,z)) \\ 
={{\widetilde P}}_{0,t}(e^{-rs}T_s\theta)[F(H_{(e^{-rs}T_s\theta)}(t,\ast))](S_{(e^{-rs}T_s\theta)}(0,z)) =   {P}_{0,t}(e^{-rs}T_s\theta)F(z).
\end{multline*}
By using the Markov property, we obtain relations (\ref{presque_cocycle_intro}).
Finally, assume that the transition kernels ${\widetilde P}_\theta(s,{\tilde z};t,\dd x)$ admits a measurable density ${\tilde p}_\theta(s,{\tilde z};t,x)$  which satisfies the lower local Aronson estimate (\ref{lowerlocalaronson_sde}). Once again, Proposition \ref{equivalent_mp} applies and gives that ${  P}_\theta(s,z;t,\dd x)$ admits a density $ p$ such that  
\begin{equation*}
 p_\theta(s,z;t,x)= {\tilde p}_\theta(s,S_\theta(s,z);t,S_\theta(t,x)) e^{ Q_\theta(t,x)}.
\end{equation*}
Since $S_\theta$ is a locally Lipschitz function, we deduce that  $ p_\theta(s,z;t,\dd x)$ is also measurable and satisfies the lower local Aronson estimate. In particular, Theorem \ref{markov_feller_sde} implies Theorem \ref{markov_feller_intro}.  This ends the proof, excepted for Theorem \ref{markov_feller_sde}.\\

\begin{proof}[Proof of Theorem \ref{markov_feller_sde}]
Since equation (\ref{sde}) is strictly elliptic ($\sigma_\theta>0$) and has continuous coefficients, it is classical (see for instance \cite[Corollary 10.1.4]{SV}) that its unique weak solution is a strongly Feller continuous diffusion, which admits transition densities ${\tilde p}_\theta(s,{\tilde z}; t,x)$ measurable with respect to $(s,t,{\tilde z},x)\in\{t>s\geq 0\}\times \mathbb R^2$ for each $\theta\in\Theta$. Moreover, we can see that relations (\ref{cocycle_sde}) are direct consequences of the Markov property and of (\ref{cocycle_coeff}). We need to prove the measurability of $\tilde p$ on $\Theta\times \{t>s\geq 0\}\times \mathbb R^2$ and the lower local Aronson estimate (\ref{lowerlocalaronson_sde}). Set, for all ${\delta}\geq 0$,
\begin{equation*}
{{\widetilde P}}_{{\delta},\theta}(s,{\tilde z};t,\dd x):={\widetilde {\mathds P}}_{s,{\tilde z}}(\theta)(X_t\in \dd x,\,\tau_{\delta}(s)>t)={\widetilde {\mathds P}}_{s,{\tilde z}}^{({\delta})}(\theta)(X_t\in \dd x,\,\tau_{\delta}>t)
\end{equation*}
with
\begin{equation*}
\tau_{\delta}(s):=\inf\{t\geq s : |X_t|\geq {\delta}\}\wedge T.
\end{equation*}
Here ${\widetilde {\mathds P}}_{s,{\tilde z}}^{({\delta})}(\theta)$ denotes the distribution of the truncated diffusion process whose coefficients are  given  on $[s,\infty)\times \mathbb R$ by
 \begin{equation*}
d_{\theta}^{({\delta})}(t,x):=d_\theta(t\wedge T, (x\wedge {\delta})\vee -\delta)\quad\mbox{and}\quad 
 \sigma_{\theta}^{({\delta})}(t,x):=\sigma_\theta(t\wedge T, (x\wedge {\delta})\vee -{\delta}).
 \end{equation*} 
Then the fundamental solution $\tilde p_{\theta}^{({\delta})}$ of the associated partial differential equation (PDE) satisfies the local Aronson estimates. Indeed, even if the associated partial differential operator is not of divergence form, we can see that it is equivalent to a uniformly elliptic divergence type operator, with bounded coefficients, employing the change of scale defined on $[s,\infty)\times \mathbb R$ by
\begin{equation*}
k_{\theta}^{({\delta})}(t,x):=\int_0^x{\frac{1}{(\sigma_{\theta}^{({\delta})}(t,y))^{2}}\exp\left({2\int_0^y \frac{d_{\theta}^{({\delta})}(t,z)}{(\sigma_{\theta}^{({\delta})}(t,z))^{2}}\,\dd z}\right)\,\dd y}.
 \end{equation*} 
Therefore, the results in \cite{Aronbon} or \cite{Porper} apply, and the fundamental solution ${\tilde q}_{\theta}^{({\delta})}$ of the associated PDE satisfies the global Aronson estimates. Besides, since
\begin{equation*}
\tilde p_{\theta}^{({\delta})}(s,{\tilde z},t,x)= {\tilde q}_{\theta}^{({\delta})}\big(s,k_{\theta}^{({\delta})}(s,{\tilde z}),t,k_{\theta}^{({\delta})}(t,x)\big)\partial_x k_{\theta}^{({\delta})}(t,x)
\end{equation*}
and $k_{\theta}^{({\delta})}$ is locally Lipschitz, we get that $\tilde p_{\theta}^{({\delta})}$ satisfies the local Aronson estimates. 

Then, following exactly the same lines as the proof of \cite[Theorem II.1.3]{Stroock_Elliptic} in the time-homogeneous situation, we can prove that the kernel ${{\widetilde P}}_{{\delta},\theta}$ admits a density $\tilde p_{\delta,\theta}$ such that, for all $0<\eta<1$, there exists $M>0$ such that, for all $0\leq s<t\leq T$, $|\tilde z|\leq \eta\delta$, $|x|\leq \eta \delta $ and  $|t-s|\leq (\eta \delta)^2$,
 \begin{equation*}
\tilde p_{\delta,\theta}(s,{\tilde z},t,x) \geq \frac{1}{\sqrt{M(t-s)}}\exp\left(-M\frac{|x-\tilde z|^{2}}{t-s}\right).
 \end{equation*} 
Since $\tilde p\geq \tilde p_\delta$, we deduce that $\tilde p$ satisfies (\ref{lowerlocalaronson_sde}) by taking $\delta$ sufficiently large. 

It remains to prove the measurability of $\tilde p$. We shall apply \cite[Theorem 11.1.4]{SV}. Since $(\theta,s,x)\longmapsto \sigma_\theta(s,x)$ and $(\theta,s,x)\longmapsto d_\theta(s,x)$ are continuous on $\Theta\times \mathbb R^2$, we can see that, for all convergent sequence $\theta_{n}\longrightarrow \theta$ in $\Theta$ and all $T,R>0$, 
\begin{equation*}
\sup_{[0,T]\times [-R,R]} |\sigma_{\theta_{n}}-\sigma_{\theta}|+|d_{\theta_{n}}-d_{\theta}| \xrightarrow[n\to\infty]{}0.
\end{equation*}
We can ckeck that the assumptions of \cite[Theorem 11.1.4]{SV} are satisfied and we conclude that, for all convergent sequence $(s_{n},\tilde z_{n})\longrightarrow (s,\tilde z)$ in $[0,\infty)\times\mathbb R$ and all bounded continuous function $G$ on the canonical space $\Omega$,
\begin{equation*}
\widetilde {\mathds E}_{s_{n},\tilde z_{n}}(\theta_{n})[G]\xrightarrow[n\to\infty]{} \widetilde {\mathds E}_{s,\tilde z}(\theta)[G] 
\end{equation*}
We deduce that $(\theta,s,\tilde z)\longmapsto \widetilde {\mathds E}_{s,\tilde z}(\theta)[G]$ is continuous on $\Theta\times [0,\infty)\times \mathbb R$. 
In particular, the family of probability measures $\{\widetilde{\mathds P}_{s,\tilde z}(\theta) : s\geq 0,\,\tilde z\in\mathbb R,\,\theta\in\Theta \}$ is tight 
and we can see that, for all bounded measurable function $F$ on $\mathbb R$, $(\theta,s,\tilde z,t)\longmapsto \widetilde {\mathds E}_{s,\tilde z}(\theta)[F(X_t)]$ is measurable on $\Theta\times \{ t> s\geq 0\}\times \mathbb R$. To this end, assume furthermore that $F$ is $L$-Lipschitz. We can write, for all compact set $K$ of the canonical space $\Omega$,
\begin{multline*}
|\widetilde {\mathds E}_{s,\tilde z}(\theta)[F(X_t)]-\widetilde {\mathds E}_{s_0,\tilde z_0}(\theta_0)[F(X_{t_0})]|
\leq L\,\widetilde {\mathds P}_{s,\tilde z}(\theta)(\Omega\setminus K) 
+ L\,\widetilde{\mathds  E}_{s,\tilde z}(\theta)[\mathds 1_K |X_t-X_{t_0}|] 
\\
+ |\widetilde {\mathds E}_{s,\tilde z}(\theta)[F(X_{t_0})]-\widetilde {\mathds E}_{s_0,\tilde z_0}(\theta_0)[F(X_{t_0})]|.
\end{multline*}
By letting $(s,\tilde z,\theta,t)\longrightarrow (s_0,\tilde z_0,\theta_0,t_0)$ and by using the tightness of the family of probability measure $\{\widetilde{\mathds P}_{s,\tilde z}(\theta) : s\geq 0,\,\tilde z\in\mathbb R,\,\theta\in\Theta \}$, we get the continuity and we deduce our claim, since any measurable bounded function is the bounded pointwise limit of a sequence of Lipschitzian functions. 

Therefore, we can define the measure $\nu$ on the product measurable space $\Theta\times \mathbb R^4$ by setting, for all $B\in\mathcal B$ and $I_1,I_{2},I_{3},I_{4}\in\mathcal B(\mathbb R)$, 
\begin{equation*}
\nu\left(B\times \prod_{k=1}^4 I_k\right):= \int_{B\times  I_1\times I_{2}\times I_{3}}\widetilde P_\theta(s,\tilde z, t, I_4)\mathds 1_{t>s\geq 0}\,\mathcal W(\dd \theta)\,\dd s\,\dd \tilde z\,\dd t.
\end{equation*}  
By standard results on disintegration of measures,  the Radon-Nykodym derivative of $\nu$ with respect to $\mathcal W(\dd \theta)\,\dd s\,\dd \tilde z\,\dd t\,\dd x$, which is nothing but $\tilde p_\theta(s,\tilde z,t,x)$,  is measurable.
\end{proof}


\section{Preliminaries of Theorems \ref{invariant_intro} and \ref{CG}}
\label{sec:7}
\setcounter{equation}{0}

\subsection{Uniform affine approximations of the environment}

In the following, set for all $\gamma\in(0,1/2)$ and $\theta\in\Theta$,
\begin{equation}\label{holder}
{ H}_\gamma(\theta) := \sup_{n\geq 0} \;\frac{\|\theta^+\|_{\gamma,n} + \|\theta^-\|_{\gamma,n}}{L(n)},\quad \Theta_\gamma:=\{0<H_\gamma<\infty\},
\end{equation}
with, for all $n\geq 0$ and $x\in\mathbb R$,
\begin{equation}\label{LL}
\|\theta^\pm\|_{\gamma,n}:=\sup_{n\leq x<y\leq n+1}\,\frac{|\theta(\pm y)-\theta(\pm x)|}{|y-x|^\gamma}\quad\mbox{and}\\\quad
L(x):=\sqrt{1+\log(1+|x|)}.
\end{equation}
In addition, denote for all $\ve>0$ by $A_{\gamma,\epsilon}(\theta)$ (see Figure \ref{fig:2}) the piecewise linear approximation of $\theta$, associated to the subdivision $S_{\gamma,\ve}:=\{x_{n,k} : n\in \mathds Z,\,0\leq k\leq m_n\}$, defined by $m_{n} :=h_{n}^{-1}:=[{L^{{1}/{\gamma}}(n)}{\ve^{-1}}]+1\in\mathbb N$, $x_{n,k}:=n + k\, h_{n}$ and $x_{-n,k}:=-x_{n,k}$. Then introduce the random affine approximation $W_{\gamma,\ve}$ defined, for all $\theta\in\Theta_\gamma$ by
\begin{equation}\label{randomapprox}
W_{\gamma,\ve}(\theta):= A_{\gamma,\eta_{\gamma,\ve}(\theta)}(\theta),\quad\mbox{with }\;\eta_{\gamma,\ve}(\theta):=\Big(\frac{\ve}{H_\gamma(\theta)}\Big)^{{1}/{\gamma}},
\end{equation}
and set
\begin{equation}\label{normesup_derivative_approx}
\Delta_{\gamma,\ve}(\theta)(x):=\theta(x)-W_{\gamma,\ve}(\theta)(x)\quad\mbox{and}\quad D_{\gamma,\ve}(\theta):=\sup_{x\in\mathbb R} \frac{| W_{\gamma,\ve}^{\prime}(\theta)(x)|}{L^{{1}/{\gamma}}(x)}.
\end{equation}

\begin{figure}[!ht]
\centering
\includegraphics[height=6.5cm,width=15cm]{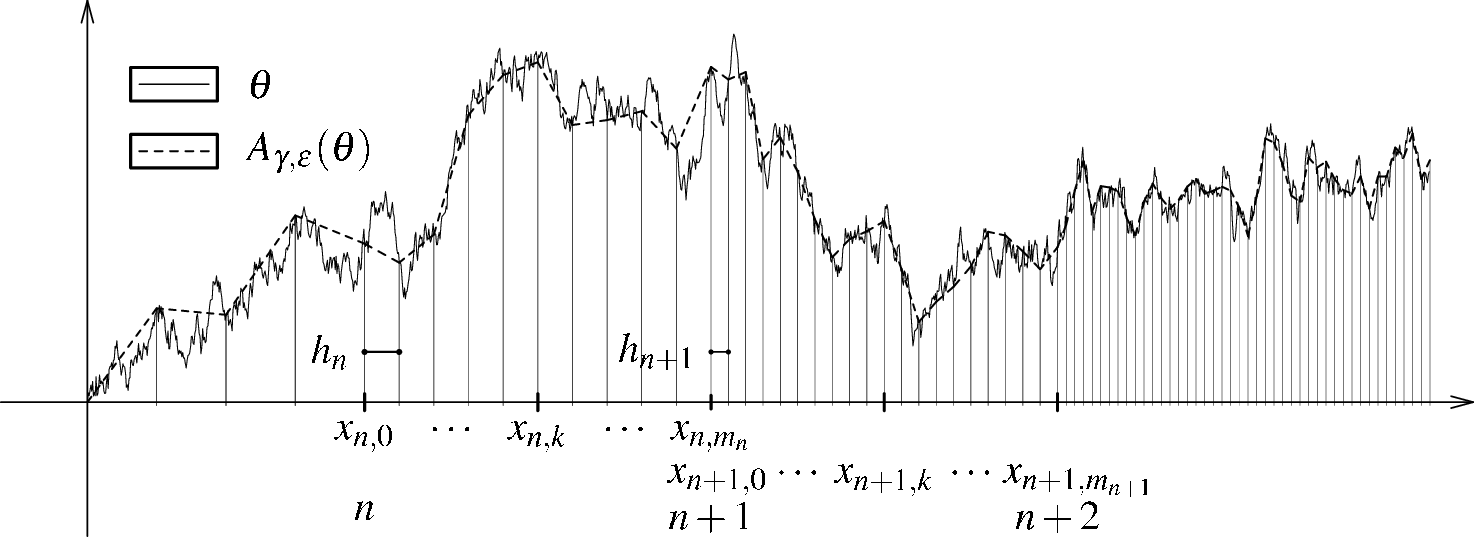}
\caption{Affine approximation of a typical Brownian path $\theta$}
\label{fig:2}
\end{figure}

\begin{prop}\label{uniform_random_approximation}
For all $\gamma\in(0,1/2)$, the subset $\Theta_\gamma\subset \Theta$ is $(T_t)$-invariant and of full measure. Furthermore, there exists $\alpha>0$ such that
\begin{equation}\label{moment_exp}
\mathds E_{\scriptscriptstyle \mathcal W}[\exp{(\alpha{ H}_\gamma^2)}]:=\int_{\Theta}\exp{(\alpha{ H}_\gamma^2(\theta))}\,\mathcal W(\dd\theta)<\infty.
\end{equation}
Besides, for all $\ve>0$ and $\theta\in\Theta_\gamma$,
\begin{equation}\label{holderrandomapprox} 
\sup_{x\in \mathbb R}|\Delta_{\gamma,\ve}(\theta)(x)| \leq \ve\quad\mbox{and}\quad D_{\gamma,\ve}(\theta)\leq \ve\big(1+(\ve^{-1}{H_\gamma(\theta)})^{{1}/{\gamma}}).
\end{equation}
\end{prop}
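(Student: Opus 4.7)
The plan is to verify the four assertions of the proposition in the order suggested by their logical dependency: first the a.s.\ finiteness of $H_\gamma$ (which gives that $\Theta_\gamma$ has full $\mathcal W$-measure), then the exponential moment bound (\ref{moment_exp}), then the $(T_t)$-invariance, and finally the deterministic pointwise bounds (\ref{holderrandomapprox}).

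For the a.s.\ finiteness and the exponential moment, the idea is to exploit stationarity of the increments of $\theta$ together with a classical Gaussian estimate on Hölder seminorms of Brownian motion. By stationarity, the random variables $\|\theta^\pm\|_{\gamma,n}$ all have the same distribution as the $\gamma$-Hölder seminorm of a standard Brownian motion on $[0,1]$. A Fernique/Garsia--Rodemich--Rumsey type argument gives a constant $\alpha_0>0$ such that $\mathds E_{\mathcal W}[\exp(\alpha_0 \|\theta^\pm\|_{\gamma,n}^2)]\leq C$ uniformly in $n$, and hence a Gaussian tail $\mathds P(\|\theta^\pm\|_{\gamma,n}>u)\leq C e^{-\alpha_0 u^2}$. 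A union bound then yields
\begin{equation*}
\mathds P(H_\gamma>t)\leq 2\sum_{n\geq 0} C\exp\!\Bigl(-\tfrac{\alpha_0}{4}\,t^2 L^2(n)\Bigr)=2C\sum_{n\geq 0}(1+n)^{-\alpha_0 t^2/4}e^{-\alpha_0 t^2/4},
\end{equation*}
which is finite for $t$ large enough and decays like $e^{-\alpha_1 t^2}$ for some $\alpha_1>0$. This simultaneously proves $\mathds P(H_\gamma<\infty)=1$ and (\ref{moment_exp}) for any $\alpha<\alpha_1$.

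For the $(T_t)$-invariance of $\Theta_\gamma$, I would compute directly from $T_t\theta(x)=e^{-t/4}\theta(e^{t/2}x)$. For $x,y\in[n,n+1]$, rewriting $|T_t\theta(y)-T_t\theta(x)|$ in terms of increments of $\theta$ on $[e^{t/2}n,e^{t/2}(n+1)]$ and covering that interval by at most $\lceil e^{t/2}\rceil+1$ consecutive unit pieces, one obtains a bound of the form $\|T_t\theta^\pm\|_{\gamma,n}\leq C(t,\gamma)\max_{k\leq e^{t/2}(n+1)+1}\|\theta^\pm\|_{\gamma,k}$. Since $\|\theta^\pm\|_{\gamma,k}\leq H_\gamma(\theta)L(k)$ and $L(e^{t/2}(n+1))/L(n)$ stays bounded on $\mathbb N$ (it tends to $1$ as $n\to\infty$), this yields $H_\gamma(T_t\theta)\leq C'(t,\gamma) H_\gamma(\theta)$, so $T_t(\Theta_\gamma)\subset\Theta_\gamma$.

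The pointwise bounds (\ref{holderrandomapprox}) are a direct computation. Writing $\eta=\eta_{\gamma,\varepsilon}(\theta)=(\varepsilon/H_\gamma(\theta))^{1/\gamma}$ and $h_n=1/m_n\in[\eta/(L^{1/\gamma}(n)+\eta),\,\eta/L^{1/\gamma}(n)]$, on each piece $[x_{n,k},x_{n,k+1}]\subset[n,n+1]$ the affine interpolation error is controlled by $\|\theta\|_{\gamma,n}h_n^\gamma\leq H_\gamma(\theta)L(n)\cdot\eta^\gamma L(n)^{-1}=H_\gamma(\theta)\eta^\gamma=\varepsilon$, which gives the first inequality after taking the supremum over $n\geq 0$ and its negative counterpart. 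For the derivative bound, on each piece $|W_{\gamma,\eta}^\prime(\theta)|=|\theta(x_{n,k+1})-\theta(x_{n,k})|/h_n\leq\|\theta\|_{\gamma,n}h_n^{\gamma-1}$, and a short calculation using $h_n^{\gamma-1}\leq\eta^{\gamma-1}L^{(1-\gamma)/\gamma}(n)+\eta^\gamma/L(n)$ together with $\|\theta\|_{\gamma,n}\leq H_\gamma(\theta)L(n)$ and $L(x)\geq L(n)$ for $x\geq n$ yields
\begin{equation*}
\frac{|W_{\gamma,\varepsilon}^\prime(\theta)(x)|}{L^{1/\gamma}(x)}\leq H_\gamma(\theta)\eta^{\gamma-1}+\frac{H_\gamma(\theta)\eta^\gamma}{L^{1/\gamma}(x)}\leq\varepsilon\bigl(\varepsilon^{-1}H_\gamma(\theta)\bigr)^{1/\gamma}+\varepsilon,
\end{equation*}
which is the claimed bound for $D_{\gamma,\varepsilon}(\theta)$. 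The main obstacle is the tail estimate of the second paragraph: one has to patch together the Gaussian tail on each $[n,n+1]$ carefully enough that the logarithmic weights $L(n)^{-1}$ compensate the union bound, and this is precisely what motivates the choice $L(n)=\sqrt{1+\log(1+|n|)}$ in (\ref{LL}).
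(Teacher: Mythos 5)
Your proof is correct, and the deterministic parts (the scaling estimate $H_\gamma(T_t\theta)\leq C(t,\gamma)H_\gamma(\theta)$ for the invariance, and the direct computation of the interpolation and slope bounds after substituting $\eta_{\gamma,\ve}(\theta)$) coincide with what the paper does. Where you genuinely diverge is the probabilistic step. The paper first applies Fernique's theorem to the seminorm $N(\theta)=\|\theta^+\|_{\gamma,1}+\|\theta^-\|_{\gamma,1}$ to get its Gaussian tail, then uses the fact that the blocks $\|\theta^+\|_{\gamma,n}+\|\theta^-\|_{\gamma,n}$ are i.i.d.\ to show via an infinite product that $\mathcal W(H_\gamma\leq h)\to 1$, and finally invokes Fernique's theorem a second time, now applied to the measurable seminorm $H_\gamma$ itself, to obtain (\ref{moment_exp}). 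You instead use only the identical distribution of the blocks plus a union bound weighted by $L(n)$, which yields a quantitative Gaussian tail $\mathcal W(H_\gamma>t)\lesssim e^{-\alpha_1 t^2}$ for $t$ large, and you integrate this tail to get the exponential moment directly. Your route is more explicit (it produces an admissible $\alpha$ and does not need independence across blocks, only stationarity), at the price of redoing by hand what the abstract Fernique argument gives for free; the paper's route is shorter once Fernique is taken as a black box. Two small points that both you and the paper gloss over, and that you may as well make explicit: $\Theta_\gamma$ is defined by $0<H_\gamma<\infty$, so full measure also uses $\mathcal W(H_\gamma=0)=0$ (trivial, since $H_\gamma(\theta)=0$ forces $\theta\equiv 0$), and the inclusion $T_t(\Theta_\gamma)\subset\Theta_\gamma$ for all real $t$, combined with the group property (apply the same bound at $-t$ to $T_t\theta$, which also gives $H_\gamma(T_t\theta)>0$), is what upgrades your one-sided inclusion to genuine $(T_t)$-invariance.
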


\begin{proof} 
Clearly $H_\gamma : \Theta\rightarrow [0,\infty]$ is a seminorm and to get inequality (\ref{moment_exp}) it suffices to apply the Fernique theorem presented in \cite[Theorem 1.3.2, p. 11]{Fernique}. To this end, we need to check that $\mathcal W(H_\gamma<\infty)>0$. By using the Hölder continuity of the Brownian motion on compact sets, the seminorm defined on $\Theta$ by $N(\theta):=\|\theta^+\|_{\gamma,1}+\|\theta^-\|_{\gamma,1}$ is finite $\mathcal W$-a.s. Moreover, by using the Fernique theorem and the Markov inequality, we deduce that there exists $c,\beta>0$ such that, for $r$ sufficiently large,
\begin{equation*}
F(r):=\mathcal W(\{N\geq r\})\leq \mathds E_{\scriptscriptstyle \mathcal W}[\exp(\beta N^2)]e^{-\beta r^2}\leq c e^{-\beta r^2}.
\end{equation*}
Besides, the random variables $(\theta\mapsto \|\theta^+\|_{\gamma,n}+\|\theta^-\|_{\gamma,n})$, $n\geq 0$, being i.i.d. by using again the Markov property, we get that
\begin{equation*}
\lim_{h\to\infty} \mathcal W(\{{\rm H}_\gamma \leq h\})=\lim_{h\to\infty}\prod_{n=0}^\infty \left(1- F\left(h\,L(n)\right)\right)\\
\geq \lim_{h\to\infty}\prod_{n=0}^\infty \Big(1-\frac{c}{n^{ \beta h^2}}\Big)=1.
\end{equation*}
Fernique's theorem applies and we deduce (\ref{moment_exp}). The fact that $\Theta_\gamma$ is $(T_t)$-invariant is obtained by noting that, for all $\theta\in\Theta$ and $t\in\mathbb R$,
\begin{equation*}
H_\gamma(T_t\theta)\leq 2 e^{\left(\gamma-{1}/{2}\right)({t}/{2})}\big(e^{{t}/{2}}+1\big)  \sup_{n\geq 0}\bigg[\frac{L\left((n+1)e^{{t}/{2}}+1\right)}{L(n)}\bigg] H_\gamma(\theta).
\end{equation*}
Furthermore, let $\ve>0$, $n\geq 0$ and $x,y\in\mathbb R$ be such that $n\leq  x,y\leq n+1$ and $|y-x|\leq h_{n}$, where $h_n$ denotes the step of the subdivision $S_{\gamma,\ve}$ defined in Figure \ref{fig:2}. We can see that
\begin{equation*}
|\theta^\pm(y)-\theta^\pm(x)|\leq L(n) H_\gamma(\theta)\; h_n^\gamma\leq {H_\gamma}(\theta)\,\ve^\gamma
\end{equation*}
and, when $|y-x|=h_n$, we get
\begin{equation*}
\frac{|\theta^\pm(y)-\theta^\pm(x)|}{|y-x|}\leq L(n) H_\gamma(\theta) |y-x|^{\gamma-1}\\
\leq H_\gamma(\theta)\,\ve^{\gamma}\,(\ve^{-1}{L^{{1}/{\gamma}}(n)}+1).
\end{equation*}
Therefore, we obtain that
\begin{equation*}
\sup_{x\in \mathbb R}|\theta(x)-A_{\gamma,\ve}(\theta)(x)|\leq {H}_{\gamma}(\theta)\,\ve^\gamma
\quad\mbox{and}\\\quad
\sup_{x\in\mathbb R} \frac{| A_{\gamma,\ve}^\prime(\theta)(x)|}{L^{{1}/{\gamma}}(x)}\leq {H}_{\gamma}(\theta)\,\ve^{\gamma-1}(1+\ve).
\end{equation*}
Replacing in the two last inequalities $\ve$ by $\eta_{\gamma,\ve}(\theta)$, defined in (\ref{randomapprox}), we deduce the proposition.
\end{proof}


\subsection{Random Foster-Lyapunov drift conditions}

\subsubsection{For the infinitesimal generators}

Let  $\varphi$ be a twice continuously differentiable  function from  $[1,\infty)$ into itself such that, $\varphi(v)=1$ on $[1,2]$, $\varphi(v)=v$ on $[3,\infty)$ and $\varphi(v)\leq v$ on $[1,\infty)$. In the sequel, we set 
\begin{equation}\label{lyapunovdomain}
F_{\theta}^{\gamma,\ve}(t,x):=1+\int_0^x \exp\left[{e^{-rt}\,T_t\Delta_{\gamma,\ve}(\theta)(y)}\right] U_\alpha^\prime(y)\, \dd y\in \overline D(L_\theta)
\end{equation}
and
\begin{multline}\label{lyapunovdomain2}
G_{\theta}^{\gamma,\ve}(t,x):=1+\int_0^x \exp\left[e^{-rt}\,{T_t\Delta_{\gamma,\ve}(\theta)(y)}\right] G^\prime_\alpha(y)\, \dd y\in\overline D(L_\theta),\\ \quad \mbox{with }\; G_\alpha(x):= \varphi(V_\alpha(x)).
\end{multline}
Here we use $G_\alpha=\varphi(V_\alpha)$  in (\ref{lyapunovdomain2}) instead of $V_\alpha$ because $V_\alpha^\prime$ do not belong to ${\rm W}^{1,\infty}_{loc}$ (there is a singularity in $0$) contrary to $U_\alpha^\prime$ in (\ref{lyapunovdomain}). 

\begin{lem}\label{step3} For all $r\in\mathbb R$, $\alpha\in(0,1)$, $\gamma\in(0,1/2)$, $T>0$ and $\lambda>0$, there exists $\overline \ve>0$ such that, for all $0<\ve<\overline \ve$, there exist a random variable $B: \Theta \longrightarrow [1,\infty)$ and $p,k,c>0$ such that, for all $\theta\in\Theta_\gamma$, $0\leq t\leq T$ and $x\in\mathbb R$,
\begin{equation}\label{lyapunovUalphastep3}
L_\theta F_{\theta}^{\gamma,\ve}(t,x) \leq -\lambda F_{\theta}^{\gamma,\ve}(t,x) + B_\theta,\\\quad 
\mbox{with }\; B_\theta\leq k \exp(c\,H_\gamma^p(\theta)).
\end{equation}
\end{lem}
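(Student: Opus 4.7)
The plan is to evaluate $L_\theta F_\theta^{\gamma,\ve}$ explicitly, exploiting the decomposition $\theta=W_{\gamma,\ve}(\theta)+\Delta_{\gamma,\ve}(\theta)$ provided by Proposition \ref{uniform_random_approximation}: the uniform bound $\sup_x|\Delta_{\gamma,\ve}(\theta)(x)|\leq\ve$ from (\ref{holderrandomapprox}) will control all prefactors of the form $\exp[e^{-rt}T_t\Delta_{\gamma,\ve}(\theta)(\cdot)]$ by constants close to $1$, whereas the piecewise-linear part $W_{\gamma,\ve}(\theta)$ enters only through its slope, bounded by $D_{\gamma,\ve}(\theta)\,L^{1/\gamma}$. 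The key identity
\[
e^{-V_\theta(t,x)}\,\partial_x F_\theta^{\gamma,\ve}(t,x)=e^{-x^{2}/2}\exp\bigl[-e^{-rt}T_tW_{\gamma,\ve}(\theta)(x)\bigr]\,U_\alpha^\prime(x),
\]
which follows from $\theta-\Delta_{\gamma,\ve}(\theta)=W_{\gamma,\ve}(\theta)$, depends on $\theta$ only through the piecewise-affine $W_{\gamma,\ve}(\theta)$ and immediately confirms $F_\theta^{\gamma,\ve}\in\overline D(L_\theta)$.

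Writing $g(t,y):=e^{-rt}T_t\Delta_{\gamma,\ve}(\theta)(y)$, a direct computation gives
\[
L_{\theta,t}F_\theta^{\gamma,\ve}(t,x)=\tfrac12\,e^{g(t,x)}\bigl[U_\alpha^{\prime\prime}(x)-x\,U_\alpha^\prime(x)-e^{-rt}\partial_xT_tW_{\gamma,\ve}(\theta)(x)\,U_\alpha^\prime(x)\bigr],
\]
while the chain rule (\ref{CR}) applied with $\theta$ replaced by $\Delta_{\gamma,\ve}(\theta)$ and $\varphi=U_\alpha^\prime$ (so $\partial_t\varphi=0$) yields
\[
\partial_tF_\theta^{\gamma,\ve}(t,x)=\tfrac{x}{2}U_\alpha^\prime(x)e^{g(t,x)}-\tfrac12\bigl(F_\theta^{\gamma,\ve}(t,x)-1\bigr)-\int_0^xe^{g(t,y)}\tfrac{y}{2}U_\alpha^{\prime\prime}(y)\,\dd y-(r+\tfrac14)\int_0^xe^{g(t,y)}g(t,y)U_\alpha^\prime(y)\,\dd y.
\]
The integration by parts in $y$ already built into (\ref{CR}) is essential, since it removes any pointwise derivative of $\Delta_{\gamma,\ve}(\theta)$, which does not exist given $\theta\in\Theta_\gamma$. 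The crucial observation is that the terms $\pm\tfrac{x}{2}U_\alpha^\prime(x)e^{g(t,x)}$ cancel in $L_\theta F_\theta^{\gamma,\ve}=L_{\theta,t}F_\theta^{\gamma,\ve}+\partial_tF_\theta^{\gamma,\ve}$, eliminating the otherwise dominant $\alpha x^{2}U_\alpha(x)$ contribution.

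Substituting $U_\alpha^\prime=\alpha xU_\alpha$, $U_\alpha^{\prime\prime}=(\alpha+\alpha^{2}x^{2})U_\alpha$, the identity $\int_0^x\tfrac{y}{2}U_\alpha^{\prime\prime}(y)\,\dd y=\tfrac{\alpha x^{2}}{2}U_\alpha(x)-\tfrac12(U_\alpha(x)-1)$ (one integration by parts), and the uniform bounds $e^{g(t,y)}\in[e^{-C_0},e^{C_0}]$ and $|g(t,y)|\leq C_0:=e^{|r+1/4|T}\ve$, one obtains
\[
L_\theta F_\theta^{\gamma,\ve}(t,x)\leq\tfrac{\alpha}{2}U_\alpha(x)\Bigl\{(\alpha e^{C_0}-e^{-C_0})x^{2}+e^{C_0}|x|\,\bigl|e^{-rt}\partial_xT_tW_{\gamma,\ve}(\theta)(x)\bigr|+c_T\Bigr\}+c_T,
\]
with $c_T$ a constant depending only on $r,\alpha,T$ and tending to $0$ with $\ve$. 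Since $\alpha e^{C_0}-e^{-C_0}\to\alpha-1<0$ as $\ve\to0$, I would pick $\overline\ve$ such that $\alpha e^{C_0}-e^{-C_0}\leq-(1-\alpha)/2=:-\mu$ for all $\ve<\overline\ve$. Proposition \ref{uniform_random_approximation} provides $|e^{-rt}\partial_xT_tW_{\gamma,\ve}(\theta)(x)|\leq K_TD_{\gamma,\ve}(\theta)L^{1/\gamma}(x)$ uniformly on $[0,T]\times\mathbb R$; as $L^{1/\gamma}$ grows sub-polynomially, the negative quadratic $-\tfrac{\mu}{2}x^{2}$ dominates all other terms for $|x|\geq R$ with some $R\leq C(\lambda)(1+D_{\gamma,\ve}(\theta))$, and combined with $F_\theta^{\gamma,\ve}\leq e^{C_0}U_\alpha$ this yields $L_\theta F_\theta^{\gamma,\ve}\leq-\lambda F_\theta^{\gamma,\ve}$ outside $[-R,R]$.

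On $[-R,R]$ every quantity appearing above is majorised by $\exp(cR^{2})\leq\exp(cD_{\gamma,\ve}(\theta)^{2})$; inserting the bound $D_{\gamma,\ve}(\theta)\leq\ve+H_\gamma^{1/\gamma}(\theta)\,\ve^{1-1/\gamma}$ from (\ref{holderrandomapprox}) yields $B_\theta:=\sup_{t\in[0,T],\,|x|\leq R}\bigl[L_\theta F_\theta^{\gamma,\ve}+\lambda F_\theta^{\gamma,\ve}\bigr]_+\leq k\exp(cH_\gamma^{2/\gamma}(\theta))$, proving (\ref{lyapunovUalphastep3}) with $p=2/\gamma$. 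The main technical subtlety is the rigorous justification of the chain rule (\ref{CR}) when $\theta\in\Theta_\gamma$ is only Hölder continuous; but after the integration by parts in (\ref{CR}), all occurrences of $\partial_y\Delta_{\gamma,\ve}(\theta)$ disappear and the resulting formula makes sense within the generalised domain $\overline D(L_\theta)$, as already used implicitly in Proposition \ref{nonexplosion}.
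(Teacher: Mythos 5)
Your proof is correct and follows essentially the same route as the paper: the same Lyapunov candidate built from the affine approximation, the chain rules (\ref{CR0}) and (\ref{CR}) applied with the environment path $\Delta_{\gamma,\ve}(\theta)$ and $\varphi=U_\alpha'$, prefactors controlled by $e^{\pm q\ve}$, and a random cutoff radius controlled through $D_{\gamma,\ve}(\theta)\lesssim H_\gamma^{1/\gamma}(\theta)$, the only difference being bookkeeping (you cancel the boundary terms of $L_{\theta,t}$ and $\partial_t$ before integrating by parts, whereas the paper bounds the two pieces separately with a $\delta$-tradeoff as in Steps \ref{step11} and \ref{step22}). One small quantitative slip: because of the $L^{1/\gamma}$ factor in the slope bound, the radius satisfies $R\lesssim (1+D_{\gamma,\ve}(\theta))^{1/(1-\ell)}$ for $\ell\in(0,1)$ rather than $1+D_{\gamma,\ve}(\theta)$, so one obtains $p=2/(\gamma(1-\ell))$ as in the paper rather than $p=2/\gamma$ — harmless here, since the lemma only asserts the existence of some $p>0$.
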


\begin{proof} 
The proof will be a consequence of the following two steps.
 
\begin{step}\label{step11}
For all $0<\delta<1$ and $R\geq 1$, there exists $\ve_1>0$ such that, for all $0<\ve<\ve_1$ and $0<\ell<1$, there exist a map $R_1: \Theta \longrightarrow [R,\infty)$ and $c_1>0$ such that, for all $\theta\in\Theta_\gamma$, $0\leq t\leq T$ and $|x|\geq R_{1}(\theta)$,
\begin{equation}\label{step1}
L_{\theta,t} F_{\theta}^{\gamma,\ve}(t,x) \leq -\delta\frac{\alpha(1-\alpha)}{2} x^2 F_{\theta}^{\gamma,\ve}(t,x),\;\;\mbox{with}\; R_1(\theta)\leq c_1 (H_\gamma^{\frac{1}{\gamma(1-\ell)}}(\theta)\vee 1).
\end{equation}
\end{step}

First of all, by using chain rule (\ref{CR0}), with $a=1$, we obtain that
\begin{multline}\label{maj1}
L_{\theta,t} F_{\theta}^{\gamma,\ve}(t,x)  =  \frac{1}{2}\left(-\alpha(1-\alpha) x^2 -\alpha x\,e^{-rt}\,(T_tW_{\gamma,\ve}(\theta))^\prime(x) +\alpha\right) \\
\times \exp{\big[e^{-rt}\,T_t\Delta_{\gamma,\ve}(\theta)(x)\big]}\, U_\alpha(x),
\end{multline}
which can be written
\begin{multline}\label{calcul4}
L_{\theta,t} F_{\theta}^{\gamma,\ve}(t,x) = -\frac{1}{2}\alpha(1-\alpha)  \left[1 -\frac{1}{(1-\alpha)x^2} +\frac{e^{-rt}\,(T_tW_{\gamma,\ve}(\theta))^\prime(x)}{(1-\alpha)x}\right] \\ 
\times x^2\exp{\big[e^{-rt}\,T_t\Delta_{\gamma,\ve}(\theta)(x)\big]}\, U_\alpha(x).
\end{multline}
Moreover, we can see that
\begin{equation}\label{calcul3}
|(T_tW_{\gamma,\ve}(\theta))^\prime(x)|
\leq \varphi_\gamma(t) D_{\gamma,\ve}(\theta)L^{{1}/{\gamma}}(x),\\
\quad\mbox{with }\;
\varphi_\gamma(t):= \left(1+{t}/{2}\right)^{{1}/{2\gamma}}e^{{t}/{4}}.
\end{equation} 
Recall that $D_{\gamma,\ve}$ is defined in (\ref{normesup_derivative_approx}). In order to simplify our calculations, introduce 
\begin{equation}\label{not}
q:=1\vee e^{-(r+{1}/{4})T}\quad\mbox{and}\quad \Psi(\ve):=\exp[q\ve].
\end{equation}
Note that 
\begin{equation}\label{calcul0}
\left(\Psi(\ve)\right)^{-1} U_\alpha(x)\leq F_{\theta}^{\gamma,\ve}(t,x)\leq \Psi(\ve) U_\alpha(x). 
\end{equation}
Besides, we can choose $\ve_1>0$ and $D\geq R$ such that
\begin{equation}\label{piticalcule2}
\left(1-\frac{1}{(1-\alpha)D^2}-\frac{1\vee e^{-rT}}{(1-\alpha)D}\right)\left(\Psi(\ve_1)\right)^{-2}\geq \delta.
\end{equation}
Then we deduce the left hand side of (\ref{step1}) by using (\ref{piticalcule2}), (\ref{calcul0}), (\ref{calcul3}), (\ref{calcul4}) and by setting, for any $0< \ve < \ve_1$,
\begin{equation}\label{pititedef}
R_1(\theta):= \left[{ \varphi_\gamma(T)D_{\gamma,\ve}(\theta)}{c_{\gamma,\ell}}\vee 1\right]^{\frac{1}{1-\ell}} D^{\frac{1}{1-\ell}},\\ \quad
\mbox{with }\; c_{\gamma,\ell}:=\sup_{|x|\geq 1}\frac{L^{{1}/{\gamma}}(x)}{|x|^{\ell}}<\infty.
\end{equation}
Furthermore, the right hand side of (\ref{step1}) is obtained by using the right hand side of (\ref{holderrandomapprox}) and by choosing $c_1$ sufficiently large.

\begin{step}\label{step22}
For all $0<\delta<1$ and $R\geq 1$, there exists $\ve_2>0$ such that, for all $0<\ve<\ve_2$,  there exists a constant $R_2\geq R$ such that, for all $\theta\in\Theta_\gamma$, $0\leq t\leq T$ and $|x|\geq R_{2}$,
\begin{equation}\label{step2}
\partial_t F_{\theta}^{\gamma,\ve}(t,x) \leq (1-\delta) \frac{\alpha}{2}x^2 F_{\theta}^{\gamma,\ve}(t,x).
\end{equation}
\end{step}

By using chain rule (\ref{CR}), we get that
\begin{multline}\label{calculder}
\partial_t F_{\theta}^{\gamma,\ve}(t,x) = \frac{\alpha}{2} x^2 \exp{\left[e^{-rt}\,T_t\Delta_{\gamma,\ve}(\theta)(x)\right]} U_\alpha(x) -\frac{1}{2}\left(F_{\theta}^{\gamma,\ve}(t,x)-1\right) \\ 
-\int_0^x \frac{\alpha y^2+1}{2} \exp{\left[e^{-rt}\,T_t\Delta_{\gamma,\ve}(\theta)(y)\right]} U_\alpha^\prime(y)\,\dd y\\
-\Big(r+\frac{1}{4}\Big) \int_0^x \left(e^{-rt}\,T_t\Delta_{\gamma,\ve}(\theta)(y)\right) \exp{\left[e^{-rt}\,T_t\Delta_{\gamma,\ve}(\theta)(y)\right]} U_\alpha^\prime(y)\,\dd y.
\end{multline}
We can write, by integration by parts in the third term of the right hand side of (\ref{calculder}), 
\begin{multline}\label{expression3}
\partial_t F_{\theta}^{\gamma,\ve}(t,x) =\frac{\alpha}{2} x^2 \exp{\left[e^{-rt}\,T_t\Delta_{\gamma,\ve}(\theta)(x)\right]} U_\alpha(x) -\left(F_{\theta}^{\gamma,\ve}(t,x)-1\right)\\
-\frac{1}{2}\alpha x^2 F_\theta^{\gamma,\ve}(t,x) +  \int_0^x F_\theta^{\gamma,\ve}(t,y) \alpha y\,\dd y\\
-\Big(r+\frac{1}{4}\Big)\int_0^x \left(e^{-rt}\,T_t\Delta_{\gamma,\ve}(\theta)(y)\right) \exp{\left[e^{-rt}\,T_t\Delta_{\gamma,\ve}(\theta)(y)\right]} U_\alpha^\prime(y)\,\dd y.
\end{multline}
Besides, by using (\ref{calcul0}) and the left hand side of (\ref{holderrandomapprox}), we can see that
\begin{equation*}\label{majj}
\left|\int_0^x \left(e^{-rt}\,T_t\Delta_{\gamma,\ve}(\theta)(y)\right) \exp{\left[e^{-rt}\,T_t\Delta_{\gamma,\ve}(\theta)(x)\right]} U_\alpha^\prime(y)\,\dd y \right| \\
\leq q\ve\Psi^2(\ve) F_\theta^{\gamma,\ve}(t,x)
\end{equation*} 
and
\begin{equation*}\label{majj2}
\left|\int_0^x F_\theta^{\gamma,\ve}(t,y) \alpha y\,\dd y\right|\leq \Psi^2(\ve) F_\theta^{\gamma,\ve}(t,x).
\end{equation*}
We deduce from the two previous inequalities, (\ref{expression3}) and (\ref{calcul0}) that  
\begin{equation}\label{borne}
\partial_t F_{\theta}^{\gamma,\ve}(t,x) \leq \Big(\big[\Psi^2(\ve)-1\big]\frac{\alpha}{2} x^2 + \big[1+
\kappa q\ve\big]\Psi^2(\ve)\Big)F_\theta^{\gamma,\ve}(t,x),
\end{equation}
with $\kappa:=|r|+{1}/{4}$. Inequality (\ref{step2}) is then a simple consequence of (\ref{borne}) by taking $\ve_2>0$ and $R_2\geq R$ such that, for all $x\geq R_2$,
\begin{equation*}
\big[\Psi^2(\ve_2)-1\big]\frac{\alpha}{2} x^2 + \big[1+
q\kappa\ve_2\big]\Psi^2(\ve_2)\leq (1-\delta)\frac{\alpha}{2}x^2.
\end{equation*}

\noindent
{\bf Proof of Lemma \ref{step3}.} We deduce Lemma \ref{step3} from (\ref{step2}) and (\ref{step1}). Indeed, we can choose $0<\delta< 1$ and $R\geq 1$ such that 
\begin{equation}\label{final}
\left(\delta\frac{\alpha(1-\alpha)}{2} -(1-\delta)\frac{\alpha}{2}\right) R^2\geq \lambda.
\end{equation}
Then we get the left hand side of (\ref{lyapunovUalphastep3}) by using (\ref{final}) and by setting $\overline \ve:=\ve_1\wedge \ve_2$ and
\begin{equation}\label{BB}
B_\theta:=\sup_{|x|\leq R_1(\theta)\vee R_2,\, 0\leq t\leq T} L_\theta F_\theta^{\gamma,\ve}(t,x).
\end{equation}
 Moreover, by using inequalities (\ref{borne}), (\ref{calcul0}), (\ref{calcul3}) and  (\ref{maj1}), we can see that there exists $C>0$ such that
\begin{equation}\label{MM}
L_\theta  F_\theta^{\gamma,\ve}(t,x)\leq C\big(1+ D_{\gamma,\ve}(\theta) |x| L^{1/\gamma}(x)+x^2\big) U_\alpha(x).
\end{equation}  
We obtain the right hand side of (\ref{lyapunovUalphastep3}) by taking $p:=2/(\gamma(1-\ell))$, $k,c$ sufficiently large and by using (\ref{MM}), (\ref{BB}) and the right hand sides of (\ref{step1}) and (\ref{holderrandomapprox}). 
\end{proof}

\begin{lem}\label{step3bis} For all $r\in\mathbb R$, $\alpha\in(0,1)$, $\gamma\in(\alpha/2,1/2)$, $T>0$, $\ve>0$ and $\lambda>0$, there exist a random variable $B : \Theta \longrightarrow [1,\infty)$, $k,c>0$  and $0<p<2$ such that, for all $\theta\in\Theta_\gamma$, $0\leq t\leq T$ and $x\in\mathbb R$,
\begin{equation}\label{2step3}
L_\theta G_{\theta}^{\gamma,\ve}(t,x) \leq -\lambda G_{\theta}^{\gamma,\ve}(t,x) + B_\theta,\quad\mbox{ with }\;B_\theta\leq k \exp(c\,H_\gamma^p(\theta)).
\end{equation}
\end{lem}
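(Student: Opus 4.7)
My plan is to follow Lemma \ref{step3} but to exploit a cancellation special to $V_\alpha$. Writing $\phi(y):=e^{-rt}T_t\Delta_{\gamma,\ve}(\theta)(y)$ and applying the chain rules (\ref{CR0}) and (\ref{CR}) with $\theta$ replaced by $\Delta_{\gamma,\ve}(\theta)$ and $\varphi=G_\alpha'$, the linear term $\tfrac{1}{2}\exp[\phi(x)]xG_\alpha'(x)$ appears with a $-$ sign in $L_{\theta,t}G_\theta^{\gamma,\ve}$ and a $+$ sign in $\partial_t G_\theta^{\gamma,\ve}$, and so cancels in the sum $L_\theta G = L_{\theta,t}G + \partial_t G$. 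Consequently the drift need not be obtained from a delicate competition between two steps as in Lemma \ref{step3}, and in particular $\ve$ need not be small, in agreement with the statement.

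After the cancellation, the only surviving source of strong negative drift is the integral $\int_0^x \exp[\phi(y)](-\tfrac{y}{2})G_\alpha''(y)\,dy$. An elementary integration by parts yields, for $|y|$ large where $G_\alpha = V_\alpha$, the antiderivative $-\tfrac{y}{2}V_\alpha'(y)+\tfrac{1}{2}V_\alpha(y)$, whence $\int_M^x (-\tfrac{y}{2})V_\alpha''(y)\,dy = -\tfrac{\alpha|x|^\alpha}{2}V_\alpha(x) + O(V_\alpha)$. Since the integrand is negative for $|y|$ large and $\exp[\phi]\in[e^{-q\ve},e^{q\ve}]$ with $q$ as in (\ref{not}), monotonicity yields $\int_0^x \exp[\phi(y)](-\tfrac{y}{2})G_\alpha''(y)\,dy \leq -\tfrac{\alpha|x|^\alpha}{2}e^{-q\ve}V_\alpha(x) + O(V_\alpha)$, and all remaining terms of $L_\theta G$ are $o(|x|^\alpha V_\alpha)$: the boundary piece $\exp[\phi]G_\alpha''(x)$ is $O(|x|^{2\alpha-2}V_\alpha)$; the correction involving $(T_tW_{\gamma,\ve}(\theta))'(x)$ is at most $\varphi_\gamma(T)D_{\gamma,\ve}(\theta)L^{1/\gamma}(x)|x|^{\alpha-1}V_\alpha$ by (\ref{calcul3}); and the terms $-\tfrac{1}{2}(G-1)$ and the $\phi$-integral are $O(V_\alpha)$ because $|\phi|\leq q\ve$. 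Using $V_\alpha \geq e^{-q\ve}(G-1)$, these bounds combine to give $L_\theta G \leq -\mu|x|^\alpha G + O(G)$ for some $\mu=\mu(\alpha,\ve,r)>0$ and every $|x|\geq R_\theta$, where $R_\theta$ is determined by the constraint $D_{\gamma,\ve}(\theta)L^{1/\gamma}(R_\theta)\leq c_0 R_\theta$.

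Enlarging $R_\theta$ if necessary so that $\mu R_\theta^\alpha \geq 2\lambda$, one gets $L_\theta G \leq -\lambda G$ on $\{|x|\geq R_\theta,\,0\leq t\leq T\}$, and setting $B_\theta := \sup_{|x|\leq R_\theta,\,0\leq t\leq T}\{L_\theta G(t,x)+\lambda G(t,x)\}$ establishes (\ref{2step3}) on all of $\mathbb R$. Crude estimates on each summand of $L_\theta G$ over $\{|x|\leq R_\theta\}$ show that its dominant growth in $R_\theta$ is that of $V_\alpha(R_\theta)=\exp(R_\theta^\alpha)$; combined with $D_{\gamma,\ve}(\theta)\lesssim_\ve H_\gamma^{1/\gamma}(\theta)$ from the right-hand side of (\ref{holderrandomapprox}), which forces $R_\theta \lesssim H_\gamma^{1/\gamma}(\theta)$ up to polylog factors, this gives $B_\theta \leq k\exp(cH_\gamma^{\alpha/\gamma}(\theta))$. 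The hypothesis $\gamma > \alpha/2$ enters precisely here: it is equivalent to $\alpha/\gamma < 2$, so any $p\in(\alpha/\gamma,2)$ furnishes the required bound $B_\theta \leq k\exp(cH_\gamma^p(\theta))$ with $p<2$.

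The main obstacle is conceptual rather than computational: with the slower-growing $V_\alpha$ in place of $U_\alpha$, the two-step split of Lemma \ref{step3} ceases to produce a negative drift for arbitrary $\ve>0$, and one must instead read the drift directly off the residual integral after cancelling the linear term $\tfrac{1}{2}\exp[\phi]xG_\alpha'(x)$ between $L_{\theta,t}G$ and $\partial_t G$; the condition $\gamma>\alpha/2$ then materializes, at the very end, through the growth rate of $B_\theta$ in $H_\gamma(\theta)$.
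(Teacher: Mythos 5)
Your proof is correct and, while built on the same two chain-rule identities (\ref{CR0}) and (\ref{CR}) as the paper, organizes them in a genuinely different way. The paper follows the two-step template of Lemma~\ref{step3}: it bounds $L_{\theta,t}G_\theta^{\gamma,\ve}$ and $\partial_t G_\theta^{\gamma,\ve}$ separately, by $-(1-\delta)\tfrac{\alpha}{2}|x|^\alpha G$ and $\delta\tfrac{\alpha}{2}|x|^\alpha G$ respectively, and then trades off the two bounds through the parameter $\delta\in(0,\tfrac12)$. You instead observe that the boundary terms $\mp\tfrac12 e^{\phi(x)}xG_\alpha'(x)$ produced by (\ref{CR0}) and (\ref{CR}) cancel \emph{exactly} when the operators are summed, so that the only surviving source of strong drift is the residual integral $-\tfrac12\int_0^x e^{\phi(y)}\,y\,G_\alpha''(y)\,\dd y$, which you control by the monotone comparison $e^\phi\geq e^{-q\ve}$: the integrand has a fixed sign for $|y|$ large, so no integration by parts touching $\phi'$ (which is only a distribution) is ever needed. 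This dispenses with the calibrated $\delta$-split altogether and makes the conclusion transparently valid for every $\ve>0$; in the paper's organization the analogous near-cancellation is hidden inside the Step~2 bound on $\partial_t G$, between its first term $\tfrac{\alpha}{2}|x|^\alpha e^{\phi(x)}V_\alpha(x)$ and the same integral. From that point on the arguments coincide: both control the off-diagonal environment term through (\ref{calcul3}) and the right-hand side of (\ref{holderrandomapprox}), both arrive at a threshold $R_\theta\lesssim (H_\gamma(\theta))^{1/(\gamma(1-\ell))}$ up to constants, and both use the hypothesis $\gamma>\alpha/2$ only at the very end, to make $p=\alpha/(\gamma(1-\ell))<2$ for $\ell$ small. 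Two small polish items for a written-up version: the various ``$O(V_\alpha)$'' and ``$O(G)$'' estimates should carry the explicit factors $e^{\pm q\ve}$ (so that the constants $k,c$ can be read off), and when you convert $-\mu|x|^\alpha V_\alpha$ into $-\mu'|x|^\alpha G$ via $V_\alpha\geq e^{-q\ve}(G-1)$, the leftover $\mu'|x|^\alpha$ term should be noted as being absorbed since $|x|^\alpha\leq V_\alpha(x)\leq e^{q\ve}(G-1)+C$.
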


\begin{proof} 
This proof uses similar ideas as the proof of Lemma \ref{step3} and we only give the main lines. Once again, the proof will be a consequence of the following two steps.

\begin{step2}\label{step21}
For all $0<\delta<1$, $R\geq 1$ and $0<\ell<1$, there exist $R_1 : \Theta \longrightarrow [R,\infty)$ and $c_1>0$ such that, for all $\theta\in\Theta_\gamma$, $0\leq t\leq T$ and $|x|\geq R_1(\theta)$,
\begin{equation}\label{step1bis}
L_{\theta,t} G_{\theta}^{\gamma,\ve}(t,x) \leq -(1-\delta)\frac{\alpha}{2} |x|^\alpha G_{\theta}^{\gamma,\ve}(t,x),\;\;\;\mbox{with }\; R_1(\theta)\leq c_1(H_\gamma^{\frac{1}{\gamma(1-\ell)}}(\theta)\vee 1).
\end{equation}
\end{step2}

By using chain rule (\ref{CR0}), with $a=1$,  we can see that, for all $x\in\{V_\alpha > 3\}$,
\begin{multline*}
L_{\theta,t} G_{\theta}^{\gamma,\ve}(t,x)
=  - \frac{\alpha}{2}  \left(1 +\frac{e^{-rt}\,(T_tW_{\gamma,\ve}(\theta))^\prime(x)}{x} -\frac{\alpha}{|x|^{2-\alpha}} + \frac{1-\alpha}{x^2}\right) \\
\times |x|^\alpha \exp{\left[e^{-rt}\,T_t\Delta_{\gamma,\ve}(\theta)(x)\right]}\, G_{\alpha}(x).\quad \label{exp3}
lien URL unsrt\end{multline*}
Moreover, we can choose $D\geq 1$ such that $\{V_\alpha \leq  3\}\subset [-D,D]$ and
\begin{equation*}
\left(1-\frac{1\vee e^{-rT}}{D}-\frac{\alpha}{D^{2-\alpha}}\right) (\Psi(\ve))^{-2}\geq (1-\delta).
\end{equation*}
Then by setting $R_1$ as in (\ref{pititedef}) we can deduce (\ref{step1bis}).

\begin{step2}\label{step23}
For all $\delta>0$ and $R\geq 1$, there exists a constant $R_2\geq R$  such that, for all $\theta\in\Theta_\gamma$, $0\leq t\leq T$ and $|x|\geq R_2$, 
\begin{equation}\label{step2bis}
\partial_t G_{\theta}^{\gamma,\ve}(t,x) \leq \delta \frac{\alpha}{2}|x|^\alpha G_{\theta}^{\gamma,\ve}(t,x).
\end{equation}
\end{step2}

By using chain rule (\ref{CR}) we can see that, for all $x\in\{V_{\alpha}>3\}$,
\begin{multline*}
\partial_t G_{\theta}^{\gamma,\ve}(t,x) = \frac{\alpha}{2} |x|^\alpha \exp{\big[e^{-rt}\,T_t\Delta_{\gamma,\ve}(\theta)(x)\big]} V_\alpha(x) - (G_{\theta}^{\gamma,\ve}(t,x) -1)\\ 
-\frac{1}{2}\int_0^x  \exp{\left[e^{-rt}\,T_t\Delta_{\gamma,\ve}(\theta)(y)\right]}\, y \,G_{\alpha}^{\prime\prime}(y)\,\dd y\\
-\Big(r+\frac{1}{4}\Big) \int_0^x \left[e^{-rt}\,T_t\Delta_{\gamma,\ve}(\theta)(y)\right] \exp{\left[e^{-rt}\,T_t\Delta_{\gamma,\ve}(\theta)(y)\right]} G_{\alpha}^\prime(y)\, \dd y. 
\end{multline*}
Then we can obtain (\ref{step2bis}) by using similar methods as in the proof of (\ref{step2}).\\

\noindent
{\bf Proof of Lemma \ref{step3bis}.} We deduce Lemma \ref{step3bis} from (\ref{step2bis}) and (\ref{step1bis}) in the same manner as we get Lemma \ref{step3} from (\ref{step2}) and (\ref{step1}). The main variation is that we need to choose $0<\ell<1$ in (\ref{step1bis}) such that $p:=\alpha/(\gamma(1-\ell))<2$.
\end{proof}


\subsubsection{For the Markov kernels}

\begin{prop}\label{lyapunovUalpha} For all $r\in\mathbb R$, $\alpha\in(0,1)$, $\gamma\in(0,1/2)$ and  $\eta,\tau,T>0$, there exists a random variable $B : \Theta\longrightarrow [1,\infty)$ and $k,c,p>0$ such that, for all $\kappa>0$, $\theta\in\Theta_\gamma$, $0\leq s\leq t\leq T$ and $x\in\mathbb R$,
\begin{equation}\label{driftcondU}
P_{s,t}(\theta)U_\alpha(x)\leq (\eta+\kappa + \mathds 1_{s\leq t\leq s+\tau}) U_\alpha(x) + B_\theta\mathds 1_{ x\in \{U_\alpha \leq \kappa^{-1}B_\theta\}},
\end{equation}
with
\begin{equation}\label{lyapunovUalphaB}
B_\theta\leq k \exp(c\,H_\gamma^p(\theta)).
\end{equation}
\end{prop}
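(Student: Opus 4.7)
The plan is to lift the infinitesimal drift estimate of Lemma \ref{step3} to a semigroup drift estimate for $U_\alpha$, exploiting the two-sided pinching $\Psi(\ve)^{-1} U_\alpha\leq F_\theta^{\gamma,\ve}\leq \Psi(\ve) U_\alpha$ from (\ref{calcul0}).

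First I would fix $\lambda>0$ (to be chosen large later) and apply Lemma \ref{step3} to obtain $\overline\ve>0$ such that, for every $\ve\in(0,\overline\ve)$, the function $F:=F_\theta^{\gamma,\ve}\in\overline D(L_\theta)$ satisfies $L_\theta F\leq -\lambda F+B_\theta$ on $[0,T]\times\mathbb R$ with $B_\theta\leq k\exp(c\,H_\gamma^p(\theta))$. Applying Ito's formula (\ref{ito_diff_intro_g}) to $e^{\lambda u}F(u,X_u)$ and stopping at $\tau_n:=\inf\{u\geq s:|X_u|\geq n\}$ kills the martingale part, so the generator-level drift inequality yields
\begin{equation*}
\mathds E_{s,x}(\theta)\big[e^{\lambda(t\wedge\tau_n)}F(t\wedge\tau_n,X_{t\wedge\tau_n})\big]\leq e^{\lambda s}F(s,x)+\frac{B_\theta}{\lambda}\,e^{\lambda t}.
\end{equation*}
Nonexplosion (Theorem \ref{nonexplosion_f}), positivity of $F$ and Fatou's lemma then give the semigroup drift
\begin{equation*}
\mathds E_{s,x}(\theta)[F(t,X_t)]\leq e^{-\lambda(t-s)}F(s,x)+\frac{B_\theta}{\lambda}.
\end{equation*}

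Inserting (\ref{calcul0}) on both sides converts this into $P_{s,t}(\theta)U_\alpha(x)\leq \Psi(\ve)^2 e^{-\lambda(t-s)} U_\alpha(x)+\Psi(\ve)B_\theta/\lambda$. To match (\ref{driftcondU}) I would first pick $\ve\in(0,\overline\ve)$ so small that $\Psi(\ve)^2\leq 1+\eta$, and then take $\lambda$ large enough that $(1+\eta)e^{-\lambda\tau}\leq \eta$ (re-applying Lemma \ref{step3} with this larger $\lambda$ to update $\overline\ve$ if necessary). The multiplicative coefficient is then at most $\eta$ for $t\geq s+\tau$ and at most $1+\eta$ for $s\leq t<s+\tau$, producing the factor $\eta+\mathds 1_{\{s\leq t\leq s+\tau\}}$. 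Setting $\tilde B_\theta:=\Psi(\ve)B_\theta/\lambda$, which retains a bound of the form $\tilde k\exp(\tilde c\,H_\gamma^p(\theta))$ after adjusting constants, the additive term is handled by the elementary dichotomy
\begin{equation*}
\tilde B_\theta\leq \kappa\, U_\alpha(x)+\tilde B_\theta\,\mathds 1_{\{U_\alpha(x)\leq \kappa^{-1}\tilde B_\theta\}},
\end{equation*}
which produces the remaining $\kappa U_\alpha(x)$ term and the localized indicator correction. Renaming $\tilde B_\theta$ as $B_\theta$ completes the proof.

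The only delicate point is the passage from the localized Ito identity to the unconditional semigroup estimate: in the present setting $a=1$, so the moment control (\ref{moment_exp_explo}) is unavailable and one cannot rely on dominated convergence to pass $n\to\infty$; instead, one must use Fatou's lemma applied to the nonnegative quantity $e^{\lambda(t\wedge\tau_n)}F(t\wedge\tau_n,X_{t\wedge\tau_n})$ together with the nonexplosion guaranteed by Theorem \ref{nonexplosion_f}. Everything else is bookkeeping of constants, taking care that the exponential ceiling $k\exp(c\,H_\gamma^p)$ inherited from Lemma \ref{step3} is preserved through multiplication by $\Psi(\ve)/\lambda$.
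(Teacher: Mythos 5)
Your proposal is correct and follows essentially the same route as the paper: apply Lemma \ref{step3} to $F_\theta^{\gamma,\ve}$, use Ito's formula on $e^{\lambda u}F_\theta^{\gamma,\ve}(u,X_u)$, transfer back to $U_\alpha$ via the pinching (\ref{calcul0}) with suitable choices of $\ve$ and $\lambda$, and absorb the constant through the $\kappa$-dichotomy. The localization/Fatou step and the explicit dichotomy are precisely the points the paper's proof leaves implicit, so your write-up is a faithful (slightly more detailed) version of the same argument.
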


\begin{proof} 
Let $\lambda>e^q$ and $0<\overline\ve<1$ be as in Lemma \ref{step3} and $0<\ve<\overline \ve$ be such that $e^{-\lambda \tau+ 2q}\leq \eta$ and $e^{2q\ve}\leq \eta +1$, where $q$ is defined in (\ref{not}). One can see by using Ito's formula (\ref{ito_diff_intro}) that there exists a Brownian motion $W$ such that, under $\mathds P_{s,x}$,
\begin{multline}\label{Ito}
e^{\lambda t}F^{\gamma,\ve}_\theta(t,X_t) = e^{\lambda  s}F^{\gamma,\ve}_\theta(s,x) + \int_s^t e^{\lambda u}\left(L_\theta F^{\gamma,\ve}_\theta +\lambda F^{\gamma,\ve}_\theta)(u,X_u\right) \dd u \\ +\int_s^t e^{\lambda u} \partial_x F_\theta^{\gamma,\ve}(u,X_u) \dd W_u.
\end{multline}
Besides, we get from Lemma \ref{step3} that there exist a random variable $B : \Theta \longrightarrow [1,\infty)$, $k,c,p>0$ such that, for all $\theta\in\Theta_\gamma$, $0\leq s\leq t\leq T$ and $x\in\mathbb R$,
\begin{equation*}
L_\theta F_{\theta}^{\gamma,\ve}(t,x) \leq -\lambda F_{\theta}^{\gamma,\ve}(t,x) + B_\theta,\quad\mbox{with}\; B_\theta\leq k \exp(c\,H_\gamma^p(\theta)).
\end{equation*}
Then one can see by taking the expectation in (\ref{Ito}) and by using (\ref{calcul0}) that, for all $\theta\in\Theta_\gamma$, $0\leq s \leq t\leq T$ and $x\in\mathbb R$,
\begin{equation*}\label{lyapunovF}
 P_{s,t}(\theta) U_\alpha(x) \leq e^{-\lambda (t-s) + 2q\ve } U_\alpha(x) + \lambda^{-1}e^{q\ve} B_\theta\leq (\eta+\mathds 1_{s\leq t\leq s+\tau})  U_\alpha(x) + B_\theta.
\end{equation*}
and we deduce that inequalities (\ref{driftcondU}) and (\ref{lyapunovUalphaB}) hold for any $\kappa>0$.
\end{proof}

\begin{prop}\label{lyapunovValpha}
For all $r\in\mathbb R$, $\alpha\in(0,1)$, $\gamma\in(\alpha/2,1/2)$ and $\eta,\tau,T>0$, there exist a random variable $B : \Theta \longrightarrow [1,\infty)$, $k,c>0$ and $0<p<2$ such that, for all $\kappa>0$, $\theta\in\Theta_\gamma$, $0\leq s\leq t\leq T$ and $x\in\mathbb R$,
\begin{equation}\label{driftcondV}
P_{s,t}(\theta) V_\alpha(x)\leq (\eta+\kappa+\mathds 1_{s\leq t\leq s+\tau}) V_\alpha(x) + B_\theta \mathds 1_{x\in \{V_\alpha\leq \kappa^{-1} B_\theta\}},
\end{equation}
with
\begin{equation}\label{lyapunovValphaB}
B_\theta\leq k \exp(c\,H_\gamma^p(\theta)).
\end{equation}
\end{prop}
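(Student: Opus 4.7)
The plan is to mirror the proof of Proposition \ref{lyapunovUalpha}, with $F_\theta^{\gamma,\ve}$ and $U_\alpha$ replaced by $G_\theta^{\gamma,\ve}$ and $V_\alpha$, and Lemma \ref{step3} replaced by Lemma \ref{step3bis}. First I would fix parameters: choose $\lambda>0$ large enough that, for the $q$ defined in (\ref{not}) and for a sufficiently small $\ve>0$ to be selected from Lemma \ref{step3bis}, one has $e^{-\lambda\tau+2q\ve}\leq\eta+1$ (and $e^{2q\ve}\leq\eta+1$ automatically). With this $(\lambda,\ve)$ Lemma \ref{step3bis} produces a random variable $B_\theta\leq k\exp(cH_\gamma^p(\theta))$ with $0<p<2$ such that
\begin{equation*}
L_\theta G_\theta^{\gamma,\ve}(t,x)\leq -\lambda G_\theta^{\gamma,\ve}(t,x)+B_\theta,\qquad 0\leq t\leq T,\;x\in\mathbb R,\;\theta\in\Theta_\gamma.
\end{equation*}

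Second, applying the It\^o formula (\ref{ito_diff_intro}) to $e^{\lambda t}G_\theta^{\gamma,\ve}(t,X_t)$ under $\mathds P_{s,x}(\theta)$, the drift term is bounded above by $e^{\lambda u}B_\theta$ by the inequality just displayed, so after a standard localization argument (to handle the local martingale term, using the non-explosion from Theorem \ref{existence_intro} and the at-most-exponential growth of $G_\theta^{\gamma,\ve}$ and its $x$-derivative) and taking expectation, I obtain
\begin{equation*}
P_{s,t}(\theta)G_\theta^{\gamma,\ve}(t,\cdot)(x)\leq e^{-\lambda(t-s)}G_\theta^{\gamma,\ve}(s,x)+\lambda^{-1}B_\theta.
\end{equation*}

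Third, I need a pointwise equivalence between $G_\theta^{\gamma,\ve}$ and $V_\alpha$ analogous to (\ref{calcul0}). Since $|T_t\Delta_{\gamma,\ve}(\theta)|\leq\ve$ on $\mathbb R$, the factor $\exp[e^{-rt}T_t\Delta_{\gamma,\ve}(\theta)(y)]$ lies in $[\Psi(\ve)^{-1},\Psi(\ve)]$ uniformly, and since $G_\alpha=\varphi(V_\alpha)$ agrees with $V_\alpha$ outside the compact set $\{V_\alpha\leq 3\}$, there exist constants $c_1,c_2>0$ (independent of $\theta,t$) such that
\begin{equation*}
\Psi(\ve)^{-1}V_\alpha(x)-c_1\leq G_\theta^{\gamma,\ve}(t,x)\leq \Psi(\ve)V_\alpha(x)+c_2.
\end{equation*}
Combining with the previous display yields, after possibly enlarging the random variable $B_\theta$ by a deterministic additive constant (which preserves the bound (\ref{lyapunovValphaB})),
\begin{equation*}
P_{s,t}(\theta)V_\alpha(x)\leq \Psi^2(\ve)e^{-\lambda(t-s)}V_\alpha(x)+B_\theta.
\end{equation*}
By our choice of $\lambda$ and $\ve$, the prefactor is at most $\eta+\mathds 1_{s\leq t\leq s+\tau}$.

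Finally, to insert the parameter $\kappa$, I split $B_\theta=B_\theta\mathds 1_{V_\alpha(x)\leq \kappa^{-1}B_\theta}+B_\theta\mathds 1_{V_\alpha(x)>\kappa^{-1}B_\theta}$; on the second set $B_\theta<\kappa V_\alpha(x)$, which yields the announced inequality (\ref{driftcondV}).

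The main obstacle I expect is the equivalence step: the function $V_\alpha'$ fails to lie in ${\rm W}^{1,\infty}_{\rm loc}$ at the origin, which is why the paper replaces $V_\alpha$ by $G_\alpha=\varphi(V_\alpha)$ in the definition of the Lyapunov function. Transferring the drift bound from $G_\theta^{\gamma,\ve}$ back to $V_\alpha$ requires tracking that the discrepancy $V_\alpha-G_\alpha$ is supported on a fixed compact set and hence contributes only bounded additive errors which can be absorbed into $B_\theta$. Once that is done the rest is a direct transcription of the $U_\alpha$-argument.
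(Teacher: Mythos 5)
Your proposal is correct and follows essentially the same route as the paper: apply Lemma \ref{step3bis} and the It\^o formula (\ref{ito_diff_intro}) to $e^{\lambda t}G_\theta^{\gamma,\ve}(t,X_t)$ exactly as in Proposition \ref{lyapunovUalpha}, then transfer the drift bound from $G_\alpha=\varphi(V_\alpha)$ to $V_\alpha$ using that their difference is supported on the compact set $\{V_\alpha\leq 3\}$ (the paper absorbs this as the additive constant $3$), and finish with the $\kappa$-split. Only a trivial slip: your parameter condition should read $e^{2q\ve-\lambda\tau}\leq\eta$ rather than $\leq\eta+1$ so that the prefactor is at most $\eta$ when $t>s+\tau$, which is arranged by taking $\lambda$ slightly larger.
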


\begin{proof} 
The proof follows the same lines as the proof of Proposition \ref{lyapunovUalpha} and we only give the main ideas. Once again, by using Ito's formula and Lemma \ref{step3bis}, we can prove that there exist a random variable $B: \Theta \longrightarrow [0,\infty)$, $k,c>0$ and  $0<p<2$ such that, for all $\theta\in\Theta_\gamma$, $0\leq s\leq t\leq T$ and $x\in\mathbb R$, 
\begin{equation*}
P_{s,t}(\theta) G_\alpha(x) \leq (\eta +\mathds 1_{s\leq t\leq s+\tau}) G_\alpha(x)+ B_\theta,\quad\mbox{with}\;B_\theta\leq k \exp(c\,H_\gamma^p(\theta)).
\end{equation*}
Moreover, since $G_\alpha\leq V_\alpha$ and $G_\alpha(x)=V_\alpha(x)$, for $x\in\{V\geq 3\}$, we obtain that
\begin{multline*}
\mathds E_{s,x}(\theta)\left[V_\alpha(X_t)\mathds 1_{\{V_\alpha(X_t)\geq 3\}}\right]
\leq (\eta +\mathds 1_{s\leq t\leq s+\tau}) G_\alpha(x)+ B_\theta\quad\mbox{and}\\
P_{s,t}(\theta) V_\alpha(x)\leq (\eta+\mathds 1_{s\leq t\leq s+\tau}) \leq (\eta+\mathds 1_{s\leq t\leq s+\tau}) V_\alpha(x) + (B_\theta+3).
\end{multline*}
This is enough to complete the proof.
\end{proof}


\subsection{Coupling method}

\subsubsection{Coupling construction}

We say that $C$ is a random $(1,\ve)$-coupling set associated to  the random Markov kernel $P$ and the random probability measure $\nu$ over $(\Theta,\mathcal B,\mathcal W)$ on $\mathbb R$, if $\ve : \Theta \longrightarrow (0,1/2]$ is a measurable map, $C_\theta$ is a compact set of $\mathbb R$ for $\mathcal W$-almost all $\theta\in\Theta$ and 
\begin{equation*}\label{coupling_set}
\inf_{z\in C_\theta}P_\theta(z;\ast) \geq \ve_\theta \nu_\theta(\ast)\quad\mathcal W\mbox{-a.s.}
\end{equation*}
Given a random $(1,\ve)$-coupling set $C$ associated to the random probability measure $\nu$, we construct a random Markov kernel $P^\star$ on $\mathbb R\times \mathbb R$ as follows. Let $\overline R$ and $ \overline P$ be two random Markov kernels on $\mathbb R\times \mathbb R$ satisfying, for all $x,y\in C_{\theta}$ and $A, B\in\mathcal B(\mathbb R)$,
\begin{equation*}
\overline R_\theta(x,y; A\times \mathbb R)=\frac{P_\theta(x;A)-\ve_\theta\,\nu_\theta(A)}{1-\ve_\theta},\quad 
\overline R_\theta(x,y; \mathbb R\times A)=\frac{P_\theta(y;A)-\ve_\theta\,\nu_\theta(A)}{1-\ve_\theta}
\end{equation*} 
and
\begin{equation}\label{couplingR}
\overline P_\theta(x,y; A\times B)=(1-\ve_\theta)\overline R_\theta(x,y; A\times B)+\ve_\theta\nu_\theta(A\cap B).
\end{equation}
Note that we can assume that $\overline P$ is a random coupling Markov kernel over $P$, in the sense that, for all $\theta\in\Theta$, $x,y\in\mathbb R$ and  $A\in\mathcal B(\mathbb R)$,
\begin{equation}\label{couplingeq}
{\overline P}_\theta(x,y; A\times \mathbb R)=P_\theta(x; A)\quad \mbox{and}\quad {\overline P}_\theta(x,y; \mathbb R \times A)=P_\theta(y; A).
\end{equation}
Then we define, 
\begin{equation}\label{coupling_construction}
P^\star_\theta(x,y; \ast):= 
\left\{\begin{array}{c}
\overline R_\theta(x,y;\ast),\quad \mbox{if}\; (x,y)\in C_{\theta}\times C_{\theta},\\
\overline P_\theta(x,y;\ast),\quad \mbox{if}\; (x,y)\notin C_{\theta} \times C_{\theta}.
\end{array}\right.
\end{equation}

\subsubsection{The Douc-Moulines-Rosenthal bound}

In order to simplify our claims, we set
\begin{multline*}
P_\theta:=P_1(\theta),\quad P_\theta(z;\dd x):=P_\theta(0,z;1,\dd x),\quad p_{\theta}(z,x):=p_{\theta}(0,z;1,x)\\
T\theta := T_1\theta\quad\mbox{and}\quad \overline U_\alpha(x,y):=\frac{U_\alpha(x)+U_\alpha(y)}{2}.
\end{multline*}
Moreover, we denote for any function $F : \Theta \longrightarrow (0,\infty)$, $n\in \mathbb N$ and $j\in\{0,\cdots,n\}$,
\begin{multline}\label{ergo_product}
F_{j,n}^+(\theta) := \max_{0\leq n_1 <\cdots < n_j\leq n-1} \prod_{k=1}^j 
F(e^{-r n_k}T^{n_k}\theta)\quad \mbox{and}\\\quad
F_{j,n}^-(\theta) := \max_{1\leq n_1 <\cdots < n_j\leq n} \prod_{k=1}^j 
F(e^{-r(n-n_k)}T^{-n_k}\theta) = F_{j,n}^{+}(T^{-n}\theta).
\end{multline}

\begin{prop}\label{A} 
For all $r\in\mathbb R$, $\alpha\in(0,1)$, $\gamma\in(0,1/2)$ and $\rho\in(0,\infty)$, there exist a random variable $B: \Theta \longrightarrow [1,\infty)$, with $\log(B)\in {\rm L}^1(\Theta,\mathcal B,\mathcal W)$, and a random $(1,\ve)$-coupling set $C$ over $(\Theta,\mathcal B,\mathcal W)$ on $\mathbb R$
 such that, for all $\theta\in\Theta_\gamma$,
\begin{equation}\label{assomp}
 P^\star_\theta \overline U_\alpha\leq \rho {\overline U}_\alpha + B_\theta\mathds 1_{ C_\theta\times C_\theta}\quad\mbox{and}\\\quad
 \sup_{(x,y)\in C_{\theta}\times C_\theta} \overline R_{\theta}\overline U_{\alpha}(x,y) \leq \frac{\rho B_\theta}{1-\ve_\theta}. 
\end{equation}
Moreover, for all $n\in\mathbb N$, $j\in\{1,\cdots,n+1\}$ and  $\nu_1,\nu_2\in\mathcal M_1$,
\begin{multline}\label{doucmoulines}
\|\nu_1 P_n(\theta) - \nu_2 P_n(\theta)\|_{U_\alpha} \leq   2\rho^n \, [(1-\ve)_{j,n}^+(\theta)\mathds 1_{j\leq n} + B_{j-1,n}^+(\theta)] \, \vert\vert \nu_1\vert\vert_{U_\alpha} \vert\vert \nu_2\vert\vert_{U_\alpha}\\
+2(1-\ve)_{j,n}^+(\theta)\mathds 1_{j\leq n}\sum_{k=0}^{n-1} \rho^k B(e^{-r (n-k-1)}\,T^{n-k-1}\theta).
\end{multline}
\end{prop}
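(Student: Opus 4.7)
The argument has two stages that match the two assertions: a local one producing the coupling ingredients and (\ref{assomp}), and a global iterative one producing (\ref{doucmoulines}).

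\emph{Coupling set and drift.} Apply Proposition \ref{lyapunovUalpha} with $T = 1$ and with $\eta,\kappa,\tau$ chosen so small that $\eta + 2\kappa + 1 \leq \rho$: this yields a random constant $B'_\theta \leq k\exp(cH_\gamma^p(\theta))$ and, for $C_\theta := \{U_\alpha \leq \kappa^{-1}B'_\theta\} = [-R_\theta, R_\theta]$, the drift $P_\theta U_\alpha \leq \rho'U_\alpha + B'_\theta\mathds 1_{C_\theta}$ for some $\rho' < \rho$. By the Gaussian moment of $H_\gamma$ from Proposition \ref{uniform_random_approximation}, $H_\gamma$ has all polynomial moments and hence $\log B' \in {\rm L}^1(\mathcal W)$. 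Applying the lower local Aronson estimate (\ref{lowerlocalaronson_intro}) to the compact interval $C_\theta$ over $[0,1]$ then furnishes a measurable $M_\theta > 0$ (polynomially controlled by $R_\theta$ and $H_\gamma(\theta)$) such that $p_\theta(0,z;1,x) \geq M_\theta^{-1/2}\exp(-M_\theta|z-x|^2)$ on $C_\theta \times C_\theta$; integrating against Lebesgue measure supplies a minorization $P_\theta(z,\cdot) \geq \ve_\theta \nu_\theta(\cdot)$ on $z \in C_\theta$, with $\ve_\theta$ of order $\exp(-cM_\theta R_\theta^2)$ and $\nu_\theta$ a normalized Lebesgue restriction to $C_\theta$, so that $-\log\ve_\theta \in {\rm L}^1(\mathcal W)$. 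The kernels $\overline R_\theta$, $\overline P_\theta$ and $P^\star_\theta$ are defined from $(C_\theta,\ve_\theta,\nu_\theta)$ via (\ref{couplingR})--(\ref{coupling_construction}). The bound $\overline R_\theta \overline U_\alpha \leq P_\theta \overline U_\alpha/(1-\ve_\theta)$ is immediate from the definition of $\overline R_\theta$; on $C_\theta \times C_\theta$, combined with $U_\alpha \leq \kappa^{-1}B'_\theta$, this gives the second inequality of (\ref{assomp}) with $B_\theta := (1+\rho'/\kappa)B'_\theta$, while off $C_\theta \times C_\theta$ one has $\overline U_\alpha(x,y) > \kappa^{-1}B'_\theta/2$, which lets the residual marginal $B'_\theta$-contributions be absorbed into an extra $\kappa\overline U_\alpha$, yielding the first inequality of (\ref{assomp}) with $\rho = \rho' + \kappa$.

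\emph{Douc-Moulines-Rosenthal bound.} By the cocycle property (\ref{presque_cocycle_intro}), $P_n(\theta) = \prod_{k=0}^{n-1} P_{e^{-rk}T^k\theta}$, so iterating the coupling kernel $P^\star_{e^{-rk}T^k\theta}$ produces a bivariate chain $(X_k, Y_k)$ whose marginals at time $k$ are $\nu_1 P_k(\theta)$ and $\nu_2 P_k(\theta)$. A Lindvall-type inequality yields $\|\nu_1 P_n(\theta) - \nu_2 P_n(\theta)\|_{U_\alpha} \leq 2\mathds E^\star[\overline U_\alpha(X_n, Y_n) \mathds 1_{\{X_n \neq Y_n\}}]$. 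Following the argument of \cite{DoucMoulines}, one conditions on the sequence of the first $j$ visits of $(X_k, Y_k)$ to $C_{e^{-rk}T^k\theta} \times C_{e^{-rk}T^k\theta}$, occurring at times $n_1 < \cdots < n_j$: the conditional failure probability of coupling at the $i$th visit is $1 - \ve_{e^{-rn_i}T^{n_i}\theta}$, so the probability of never coalescing is at most $(1-\ve)_{j,n}^+(\theta)$ after taking the supremum over subsequences. The iteration of the drift (\ref{assomp}) under the shifted environments produces the term $\rho^n \overline U_\alpha(X_0,Y_0)$ together with the additive telescoping contribution $\sum_{k=0}^{n-1} \rho^k B_{e^{-r(n-k-1)}T^{n-k-1}\theta}$, while the term $B_{j-1,n}^+(\theta)$ arises from a Markov-type estimate controlling the event that fewer than $j$ visits to the coupling set occurred. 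Weighting each contribution by $\|\nu_1\|_{U_\alpha}\|\nu_2\|_{U_\alpha}$ reproduces exactly (\ref{doucmoulines}).

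The genuine obstacle is not the coupling argument itself, which is standard once the local ingredients are in place, but the careful bookkeeping of random constants: one must verify that $R_\theta$, $M_\theta$ grow only polynomially in $H_\gamma(\theta)$ so that both $-\log \ve_\theta$ and $\log B_\theta$ remain in ${\rm L}^1(\mathcal W)$, and keep the shifted environments $e^{-rk}T^k\theta$ attached to $B$ and $\ve$ throughout the entire iteration so that (\ref{doucmoulines}) emerges in its exact environment-dependent form rather than with a uniform constant.
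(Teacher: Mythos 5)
Your route is the same as the paper's: the one-step drift from Proposition \ref{lyapunovUalpha} with coupling set $C_\theta=\{U_\alpha\le\kappa^{-1}B_\theta\}$, the minorization on $C_\theta$ obtained from the lower local Aronson estimate (\ref{lowerlocalaronson_intro}), the bivariate drift argument giving (\ref{assomp}), and the factorization $P_n(\theta)=P(\theta)\cdots P(e^{-r(n-1)}\,T^{n-1}\theta)$ from (\ref{presque_cocycle_intro}) feeding the Douc--Moulines time-inhomogeneous bound (the paper simply cites \cite[Theorem 8]{DoucMoulines} where you re-sketch its coupling proof). Two points, however, do not survive as written.

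First, the requirement that $\eta+2\kappa+1\le\rho$ is unsatisfiable whenever $\rho\le 1$, which is exactly the regime in which the proposition is used later (Proposition \ref{crucial} takes $0<\rho<1$). The ``$+1$'' should never appear: choosing $\tau<1$ in Proposition \ref{lyapunovUalpha} makes the indicator $\mathds 1_{s\leq t\leq s+\tau}$ vanish for the unit-time kernel $P_\theta=P_{0,1}(\theta)$, and one then takes $\eta+2\kappa=\rho$, as the paper does. Relatedly, your choice $B_\theta:=(1+\rho'/\kappa)B'_\theta$ only yields $\overline R_\theta\overline U_\alpha\le B_\theta/(1-\ve_\theta)$ on $C_\theta\times C_\theta$; to obtain the stated bound $\rho B_\theta/(1-\ve_\theta)$ when $\rho<1$ you need the extra factor $\rho^{-1}$ (the paper sets $B_\theta=\big((\rho\kappa^{-1}\widetilde B_\theta+\widetilde B_\theta)\rho^{-1}\big)\vee\widetilde B_\theta$). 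These are repairable bookkeeping slips, but as written your constants only work for $\rho\ge 1$. Second, you assert that the Aronson constant $M_\theta$ is polynomially controlled by $R_\theta$ and $H_\gamma(\theta)$, and you list $-\log\ve_\theta\in{\rm L}^1(\mathcal W)$ among the things to be verified; Theorem \ref{markov_feller_intro} gives no quantitative dependence of $M$ on $\theta$, so this control cannot be extracted from what is available. Fortunately it is also not needed: the proposition (and its later use through Lemma \ref{lemmaconv} with $F=1-\ve$) only requires $\ve_\theta\in(0,1/2]$ measurable and positive together with $\log B\in{\rm L}^1(\Theta,\mathcal B,\mathcal W)$, the latter following from $B_\theta\le k\exp(c\,H_\gamma^p(\theta))$ and the Fernique bound (\ref{moment_exp}), exactly as you argue.
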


\begin{proof} 
Let $\eta$ and $\kappa$ be two positive constants such that $\rho=\eta+2\kappa $ and use the  Proposition \ref{lyapunovUalpha} to obtain $\widetilde B:\Theta\longrightarrow [1,\infty)$ and $k,c,p>0$ such that, for all $\theta\in\Theta_\gamma$, 
\begin{equation*}
P_\theta U_\alpha \leq (\eta+\kappa)U_\alpha + \widetilde B_{\theta}\mathds 1_{C_{\theta}}, 
\,\,\mbox{with}\,\, \widetilde B_\theta\leq k \exp(c H_\gamma^p(\theta))\,\,\mbox{and}\,\, C_{\theta}=\{U_\alpha\leq \kappa^{-1}\widetilde B_\theta\}.
\end{equation*} 
The same arguments as in the proof of \cite[Proposition 11, p. 1660]{DoucMoulines} apply. Indeed, we can write, for any random Markov kernel $\overline P$ satisfying (\ref{couplingeq}),
\begin{equation*}
{\overline P}_\theta \overline U_\alpha \leq (\eta+\kappa) \overline U_\alpha + \frac{\widetilde B_{\theta}}{2} (\mathds 1_{C_{\theta}^{\mathtt c}\times C_{\theta}}+ \mathds 1_{C_{\theta}\times C_{\theta}^{\mathtt c}})+ \widetilde  B_\theta \mathds 1_{C_\theta\times C_\theta}.
\end{equation*}
Since $\widetilde B_{\theta}\leq 2 \kappa\overline U_{\alpha}$ on $C_{\theta}^{\mathtt c}\times C_{\theta}$ and $C_{\theta}\times C_{\theta}^{\mathtt c}$, we obtain from the last inequality that
\begin{equation}\label{maj}
{\overline P}_\theta \overline U_\alpha \leq \rho \overline U_\alpha + \widetilde  B_\theta \mathds 1_{C_\theta\times C_\theta}.
\end{equation}
Then we deduce that (\ref{assomp}) is satisfied by setting $B_\theta:=({(\rho \kappa^{-1}{\widetilde  B_\theta}+ \widetilde B_\theta)}{\rho^{-1}})\vee \widetilde B_\theta$ and by using  (\ref{maj}), (\ref{coupling_construction}) and (\ref{couplingR}). Besides, $\log(\widetilde B)\in{\rm L}^1(\Theta,\mathcal B,\mathcal W)$ by using (\ref{moment_exp}) and thus similarly for $\log(B)$. Moreover, for all $\theta\in\Theta_{\gamma}$, $C_{\theta}$ is a compact set and we get from the lower local Aronson estimate (\ref{lowerlocalaronson_intro}) that $C$ is a random $(1,\ve)$-coupling set associated to the random distribution $\nu$ defined, for all $\theta\in\Theta_{\gamma}$ and $A\in\mathcal B(\mathbb R)$ by
\begin{equation*}
\ve_{\theta}:=\left(\int_{\mathbb R} \inf_{z\in C_{\theta}}p_{\theta}(z,x)\,\dd x\right)\wedge \frac{1}{2}>0\quad\mbox{and}\quad \nu_{\theta}(A):= \frac{\int_{A} \inf_{z\in C_{\theta}}p_{\theta}(z,x)\,\dd x}{\int_{\mathbb R} \inf_{z\in C_{\theta}}p_{\theta}(z,x)\,\dd x}.
\end{equation*}
Furthermore, we can write by using (\ref{presque_cocycle_intro}) that
\begin{equation*}
P_{n}(\theta)=P(\theta)\cdots P(e^{-r(n-1)}\,T^{n-1}\theta)	
\end{equation*} 
and therefore, a direct application of \cite[Theorem 8, p. 1656]{DoucMoulines} gives (\ref{doucmoulines}).
\end{proof}


\subsection{Ergodicity and exponential stability of the RDS}

\subsubsection{Ergodicity}

\begin{prop}\label{ergodicity}
The dynamical system $(\Theta,\mathcal B,\mathcal W,(T_t)_{t\in\mathbb R})$ is ergodic.
\end{prop}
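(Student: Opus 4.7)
The plan is to establish the stronger property that $(T_t)_{t\in\mathbb R}$ is \emph{mixing} on $(\Theta,\mathcal B,\mathcal W)$, which immediately implies ergodicity. Since $\mathcal B$ is generated by the Gaussian cylinder $\pi$-system
\begin{equation*}
\mathcal C := \Big\{ \big\{\theta : (\theta(x_1),\ldots,\theta(x_n)) \in A\big\} : n\geq 1,\; x_1,\ldots,x_n\in\mathbb R,\; A\in\mathcal B(\mathbb R^n)\Big\},
\end{equation*}
it suffices, by a standard monotone class / $\pi$-$\lambda$ argument, to check that
\begin{equation}\label{plan:mix}
\mathcal W(E\cap T_t^{-1} F) \xrightarrow[t\to\infty]{} \mathcal W(E)\,\mathcal W(F),\qquad E,F\in\mathcal C.
\end{equation}

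First I would record that $\mathcal W$ is $(T_t)$-invariant, which is nothing but Brownian scaling: for every $\lambda>0$ the processes $x\mapsto \lambda^{-1/2}\theta(\lambda x)$ on $[0,\infty)$ and on $(-\infty,0]$ are two independent standard Brownian motions, so $S_\lambda \theta\overset{(d)}{=}\theta$ under $\mathcal W$, and $T_t=S_{e^{t/2}}$. It remains to prove \eqref{plan:mix}. Fix indices $x_1,\ldots,x_n$ and $y_1,\ldots,y_m$ in $\mathbb R$; under $\mathcal W$ the vector
\begin{equation*}
\big(\theta(x_i),\; T_t\theta(y_j)\big)_{1\leq i\leq n,\,1\leq j\leq m} = \big(\theta(x_i),\; e^{-t/4}\theta(e^{t/2}y_j)\big)_{i,j}
\end{equation*}
is jointly centered Gaussian, with each marginal block having a covariance structure independent of $t$. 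Therefore, by the classical characterization of mixing for stationary Gaussian systems, it is enough to prove that every cross-covariance $\mathbb E_{\mathcal W}[\theta(x_i)\cdot T_t\theta(y_j)]$ tends to $0$ as $t\to\infty$; convergence of finite-dimensional distributions to the product Gaussian then yields \eqref{plan:mix} by a uniform-continuity argument for bounded continuous $A,F$, extended to Borel sets by the portmanteau theorem together with monotone convergence and outer regularity.

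The cross-covariance computation is elementary. Using that, under $\mathcal W$, $\theta\restriction[0,\infty)$ and $\theta\restriction(-\infty,0]$ are two independent standard Brownian motions, if $x_i$ and $y_j$ have opposite signs the covariance vanishes identically in $t$; if they have the same sign then for $t$ large enough so that $e^{t/2}|y_j|>|x_i|$,
\begin{equation*}
\mathbb E_{\mathcal W}\!\left[\theta(x_i)\cdot e^{-t/4}\theta(e^{t/2}y_j)\right] = \mathrm{sgn}(y_j)\,e^{-t/4}\,|x_i|\xrightarrow[t\to\infty]{} 0.
\end{equation*}
This delivers cross-covariance decay, hence \eqref{plan:mix} on $\mathcal C$, hence mixing of $(T_t)_{t\geq 0}$, hence ergodicity of $(\Theta,\mathcal B,\mathcal W,(T_t)_{t\in\mathbb R})$ (note that ergodicity of the one-parameter group follows from ergodicity of the semigroup $(T_t)_{t\geq 0}$ via any fixed time $T_{t_0}$).

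The only step requiring care is the passage from mixing on $\mathcal C$ to mixing on all of $\mathcal B$; the main subtlety is that $\mathcal C$ is not an algebra, but since it is a $\pi$-system stable under finite intersection and generates $\mathcal B$, the Dynkin $\pi$-$\lambda$ theorem (applied to $\{F\in\mathcal B : \mathcal W(E\cap T_t^{-1}F)\to \mathcal W(E)\mathcal W(F)\}$ for fixed $E\in\mathcal C$, and then symmetrically in $E$) closes the argument without any further analytic estimates.
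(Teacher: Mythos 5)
Your proof is correct, but it follows a genuinely different route from the paper. The paper conjugates the scaling flow to the time-shift on Ornstein--Uhlenbeck paths via the Lamperti-type maps $U^{\pm}(\theta)=(s\mapsto e^{-s/4}\theta(\pm e^{s/2}))$, invokes the known ergodicity of the stationary OU process for the shift, and then transfers invariant sets back through the commutative diagram, using the identity $(U^{+})^{-1}(U^{+}(A))\cap (U^{-})^{-1}(U^{-}(A))=A$ to reassemble the two half-lines. You instead exploit the Gaussian linear structure directly: $T_t$ acts as a variance-preserving linear map on the first chaos, the cross-covariances $\mathds E_{\mathcal W}[\theta(x)\,T_t\theta(y)]$ vanish for opposite signs and equal $e^{-t/4}|x|$ (note: positive, not $\mathrm{sgn}(y)\,e^{-t/4}|x|$) for large $t$ when the signs agree, and covariance decay for a Gaussian flow yields mixing, hence ergodicity. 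Your approach proves the stronger property of mixing and avoids the conjugation bookkeeping; the paper's approach avoids any Gaussian-systems machinery by outsourcing the dynamics to a known ergodic process. Two small points to tighten in your write-up: the $\pi$-$\lambda$ extension is indeed unproblematic, but it is worth saying why the relevant class is closed under increasing limits, namely because $T_t$-invariance of $\mathcal W$ makes the approximation $\mathcal W\big(T_t^{-1}(F\setminus F_n)\big)=\mathcal W(F\setminus F_n)$ uniform in $t$; and the passage from weak convergence of the Gaussian blocks to convergence of $\mathcal W(E\cap T_t^{-1}F)$ for arbitrary Borel cylinder sets is not literally given by portmanteau (boundaries of general Borel sets need not be null), so either restrict the generating $\pi$-system to rectangles that are continuity sets of the limit law, or quote the classical Gaussian criterion (Maruyama, Kolmogorov--Rozanov: for jointly Gaussian blocks with fixed marginals, the maximal correlation is the largest canonical correlation, which tends to $0$ with the cross-covariances), which settles the cylinder-set step outright.
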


\begin{proof} 
Introduce three measurable maps $U^\pm : \Theta \longrightarrow \Theta$ and $S_t : \Theta \longrightarrow \Theta$ defined by
\begin{equation*}
U^\pm(\theta):=(s\longmapsto e^{-{s}/{4}}\theta (\pm e^{s/2}))\quad\mbox{and}\quad S_t(\theta):=(s\longmapsto \theta(s+t)).
\end{equation*}
It is classical that the distribution of $U^\pm$ under the Wiener measure $\mathcal W$, denoted by $\Gamma$, is the distribution of the stationary Ornstein-Uhlenbeck process having the standard normal distribution as stationary distribution. This one is an ergodic process and, as a consequence, the dynamical system $(\Theta,\mathcal B,\Gamma, (S_t)_{t\in\mathbb R})$ is ergodic (see, for instance, \cite[Theorem 20.10]{Ka}).
Besides, it is clear that the following diagram is commutative:
\begin{equation*}\label{OU}
\xymatrix @R=1cm @C=3.5cm{
    (\Theta,\mathcal B, \mathcal W) \ar@<2pt>[r]^{U^\pm} \ar@<-2mm>[d]_{T_t}  & (\Theta,\mathcal B,\Gamma) \ar@<2mm>[d]^{S_t} \\
   (\Theta,\mathcal B, \mathcal W) \ar[r]_{U^\pm} \ar@<-2mm>[u]_{T^{-1}_t} & (\Theta,\mathcal B,\Gamma) \ar@<2mm>[u]^{S^{-1}_t}
  }
\end{equation*}
Let $A\in \mathcal B$ be such that $T^{-1}_t (A) =A$, with $t\neq 0$. By using  the ergodicity of the dynamical system $(\Theta,\mathcal B,\Gamma, (S_t)_{t\in\mathbb R})$, it follows that
\begin{equation*}
S^{-1}_t (U^\pm (A))= U^\pm (T_t^{-1}(A)) = U^\pm (A)\quad\mbox{and}\quad \Gamma(U^\pm (A))=0\quad\mbox{or}\quad =1.
\end{equation*}
Moreover, we can see that
\begin{equation*}
U^\pm (A)= (U^\pm)^{-1} (U^\pm (A))\quad\mbox{and}\quad  (U^+)^{-1} (U^+ (A)) \cap ( U^-)^{-1}(U^- (A))= A.
\end{equation*}
We conclude that $\mathcal W(A)=0 $ or $ =1$ and the proof is finished.
\end{proof}

\subsubsection{Exponential stability}

\begin{lem}\label{lemmaconv} Assume that $r=0$.  Let $F$ be such that $(\log(F)\vee 0)\in {\rm L}^{1}(\Theta,\mathcal B,\mathcal W)$ and $F^{\pm}$ as in (\ref{ergo_product}).
\begin{enumerate} 
\item If $\mathcal W(F<1)=1$ then, for all $L\geq 1$, there exists $\lambda>0$ such that
\begin{equation}
\limsup_{n\to\infty}\,e^{\lambda n}\,{F_{\left[\frac{n}{L}\right],n}^\pm}(\theta)=0\quad\mathcal W\mbox{-a.s.}
\end{equation}
\item If $\mathcal W(F\geq 1)>0$ then, for all $\eta>0$, there exists $L>0$ such that
\begin{equation}
\limsup_{n\to\infty}\,{e^{-\eta n}}\,{F_{\left[\frac{n}{L}\right],n}^\pm(\theta)}=0\quad\mathcal W\mbox{-a.s.}
\end{equation}
\end{enumerate}
\end{lem}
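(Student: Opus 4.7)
The plan is to take logarithms and reduce both statements to asymptotic bounds on sums of the top-$j$ entries of the orbit $\{\log F(T^{k}\theta) : 0\le k\le n-1\}$. Since $F>0$ $\mathcal{W}$-a.s., the maximum defining $F^{+}_{j,n}(\theta)$ is realized by selecting the $j$ largest values of $F(T^{k}\theta)$, so
\[
\log F^{+}_{j,n}(\theta) \;=\; \sum_{k=1}^{j}\ell^{(n)}_{k}(\theta),
\]
where $\ell^{(n)}_{1}(\theta)\ge \cdots\ge \ell^{(n)}_{n}(\theta)$ is the decreasing rearrangement of $\{\log F(T^{k}\theta):0\le k\le n-1\}$. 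The $-$ case is handled identically by working with $T^{-1}$ in place of $T$; the system $(\Theta,\mathcal{B},\mathcal{W},T^{-1})$ is ergodic as well, by Proposition \ref{ergodicity}, so no separate argument is needed.

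For part (i), since $\mathcal{W}(\log F<0)=1$, continuity of measure provides $c_{0}>0$ with $p:=\mathcal{W}(\log F\le -c_{0})>1-1/L$. Applying Birkhoff's theorem to the bounded function $\mathbf{1}_{\{\log F\le -c_{0}\}}$, for $\mathcal{W}$-a.e.\ $\theta$ the number of $k\in\{0,\ldots,n-1\}$ with $\log F(T^{k}\theta)\le -c_{0}$ is $pn+o(n)$. All $\ell^{(n)}_{k}(\theta)$ are $\le 0$, and at most $(1-p)n+o(n)$ of the top $j=[n/L]$ slots can fall outside $\{\log F\le -c_{0}\}$, so at least $n(1/L-(1-p))-o(n)$ of them each contribute at most $-c_{0}$, yielding
\[
\log F^{+}_{[n/L],n}(\theta) \;\le\; -c_{0}\,n\bigl(1/L-(1-p)\bigr)+o(n).
\]
Any $0<\lambda<c_{0}\bigl(1/L-(1-p)\bigr)$ then establishes the exponential decay.

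For part (ii), the key estimate is the truncation inequality
\[
\sum_{k=1}^{[n/L]}\ell^{(n)}_{k}(\theta) \;\le\; \sum_{k=0}^{n-1} \log F(T^{k}\theta)\,\mathbf{1}_{\{\log F(T^{k}\theta)>M\}} \;+\; [n/L]\cdot M,
\]
valid for every $M\ge 0$: indices where $\log F>M$ may as well be counted at their actual value, while each of the $[n/L]$ top slots that falls below $M$ contributes at most $M$. Because $(\log F)^{+}\in L^{1}$, dominated convergence gives $c(M):=\mathbb{E}_{\mathcal{W}}[(\log F)^{+}\mathbf{1}_{\{\log F>M\}}]\to 0$ as $M\to\infty$. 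Given $\eta>0$, choose $\eta'\in(0,\eta)$, then $M$ with $c(M)<\eta'/2$, and finally $L$ with $M/L<\eta'/2$; Birkhoff's theorem applied to the $L^{1}$ function $(\log F)\mathbf{1}_{\{\log F>M\}}$ bounds the right-hand side by $\eta' n+o(n)$, whence $\log F^{+}_{[n/L],n}(\theta)-\eta n\le -(\eta-\eta')n+o(n)\to -\infty$ a.s.

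The main obstacle is part (ii): the naive bound $\log F^{+}_{[n/L],n}\le \sum_{k}(\log F(T^{k}\theta))^{+}\sim n\,\mathbb{E}[(\log F)^{+}]$ is useless because it ignores the freedom to enlarge $L$. Splitting the positive part at an adjustable threshold $M$ and tuning $M$ against $L$ so that both the tail contribution $c(M)$ and the bulk contribution $M/L$ become small is what transfers the improvement from a few large orbit values to the top-$j$ statistic. Part (i) and the $\pm$ reduction are comparatively routine ergodic-theoretic bookkeeping.
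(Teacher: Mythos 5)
Your argument is correct, and it uses the same two ingredients as the paper's proof: Birkhoff's ergodic theorem for $T$ (and for $T^{-1}$, which disposes of the $F^{-}$ case exactly as in the paper) applied to indicators and to truncated logarithms, with the truncation level tuned against $L$; but the bookkeeping is genuinely different. For part (i) the paper selects $c<1$ and $\ell>0$ with $\mathcal W(F\ge c)<L^{-1}$ and $\mathds E_{\scriptscriptstyle\mathcal W}[\log F\,\mathds 1_{F\ge c}]<-\ell$, observes that eventually at most $[n/L]$ orbit values exceed $c$, and dominates $F^{+}_{[n/L],n}$ by the product of \emph{all} orbit values $\ge c$, which decays like $e^{-\ell n}$; you instead count how many of the top $[n/L]$ slots are forced below $-c_{0}$ and multiply by $c_{0}$. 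These are dual threshold arguments, and yours has the incidental merit of not needing a $c$ with $0<\mathcal W(F\ge c)<L^{-1}$ — a choice the paper's \enquote{we can see that there exist} passes over and which is unavailable in degenerate cases (e.g. $F$ a.s. equal to a constant less than $1$), where the conclusion is nonetheless easy. For part (ii) the paper takes $c\ge 1$ with small tail expectation and then $L$ with $\mathcal W(F\ge c)>L^{-1}$, so that eventually at least $[n/L]$ values exceed $c$ and the top product is dominated by the full product over $\{F\ge c\}$, treating bounded $F$ as a separate, trivial case; your single inequality $\sum_{k\le[n/L]}\ell^{(n)}_{k}\le\sum_{k}\log F(T^{k}\theta)\mathds 1_{\{\log F(T^{k}\theta)>M\}}+[n/L]M$, valid for every $M\ge 0$, replaces both the exceedance-count condition and the bounded/unbounded dichotomy by the explicit bulk term $M/L$, with the tail controlled by dominated convergence just as in the paper. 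Both routes are elementary and of comparable length; yours is somewhat more uniform, the paper's slightly more economical in part (i) since it extracts the rate directly from $\mathds E_{\scriptscriptstyle\mathcal W}[\log F\,\mathds 1_{F\ge c}]$.
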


\begin{proof} 
We prove the lemma only for $F^+$ since the proof for $F^-$ is obtained replacing $\theta$ by $T^{-1}\theta$ and $T$ by $T^{-1}$. We set, for all $c\geq 0$ and $k\geq 1$,
\begin{equation*}
\log[F_{k}^{(c)}(\theta)]:={\log[F(T^{k-1}\theta)]\mathds 1_{F(T^{k-1}\theta)\geq c}}\quad\mbox{and}\quad F^{(c)}:= F^{(c)}_{1}.
\end{equation*}
Assume that $\mathcal W(F<1)=1$. We can see that there exist $0<c<1$ and $\ell>0$ such that
\begin{equation*}
\mathds E_{\scriptscriptstyle \mathcal W}[\mathds 1_{F\geq c}] <  {L^{-1}}\quad\mbox{and}\quad 
\mathds E_{\scriptscriptstyle \mathcal W}[\log (F^{(c)})]<-\ell.
\end{equation*}
By applying the ergodic theorem to the ergodic dynamical system $(\Theta,\mathcal B,\mathcal W,T)$ we obtain that, for $\mathcal W$-almost all $\theta\in\Theta$ and all integer $n$ sufficiently large,
\begin{equation*}
\sum_{k=1}^{n}\mathds 1_{F(T^{k-1}\theta)\geq  c}\leq  \left[\frac{n}{L}
\right]\quad\mbox{and}\quad {F_{\left[\frac{n}{L}\right],n}^+(\theta)}\leq{\prod_{k=1}^{n} F_{k}^{(c)}(\theta)}\leq {e^{-\ell n}}.
\end{equation*} 
Then we deduce the first point by taking $0<\lambda<\ell$. Assume that $\mathcal W(F\geq 1)>0$. Note that if $F$ is bounded $\mathcal W$-a.s. the second point of the lemma is obvious. Moreover, when $F$ is unbounded with positive probability, it is not difficult to see that there exist $0<\kappa<\eta$, $c\geq 1$ and $L\geq 1$ such that
\begin{equation*}
\mathds E_{\scriptscriptstyle \mathcal W}[\log(F^{(c)})]<\kappa\quad\mbox{and}\quad  \mathds E_{\scriptscriptstyle \mathcal W}[\mathds 1_{F\geq c}] > {L^{-1}}.
\end{equation*}
Once again, the ergodic theorem allow us to obtain the second point  since, for $\mathcal W$-almost all $\theta\in\Theta$ and all integer $n$ sufficiently large,
\begin{equation*}
\sum_{k=1}^{n}\mathds 1_{F(T^{k-1}\theta)\geq  c}\geq \left[\frac{n}{L}\right]\quad \mbox{and}\quad {F_{\left[\frac{n}{L}\right],n}^+(\theta)}\leq{\prod_{k=1}^{n} F_{k}^{(c)}(\theta)}\leq {e^{ \kappa n}}.
 \end{equation*}
\end{proof}

\begin{prop}\label{crucial} Assume that $r=0$. For all $\alpha\in(0,1)$ there exists $\lambda>0$ such that, for all families $\{\nu_t^\pm : t\geq 0\}$ of random distribution on $\mathbb R$ over $(\Theta,\mathcal B,\mathcal W)$ satisfying
\begin{equation}\label{hyp}
\lim_{t\to\infty}\frac{\log(\|\nu_t^\pm\|_{U_\alpha})}{t}=0\quad \mathcal W\mbox{-a.s.},
\end{equation}
the following discrete-time convergences hold:
\begin{equation}\label{conver0}
\limsup_{t\to\infty} \frac{\log(\|\nu_t^+(\theta) P_{[t]}(\theta) - \nu_t^-(\theta) P_{[t]}(\theta)\|_{U_\alpha})}{t}\leq -\lambda \quad \mathcal W\mbox{-a.s.}
\end{equation}
and
\begin{equation}\label{conver1}
\limsup_{t\to\infty} \frac{\log(\|\nu_t^+(\theta) P_{[t]}(T^{-[t]}\theta) - \nu_t^-(\theta) P_{[t]}(T^{-[t]}\theta)\|_{U_\alpha})}{t}\leq -\lambda \quad \mathcal W\mbox{-a.s.}
\end{equation}
\end{prop}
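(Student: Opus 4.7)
The plan is to combine the Douc--Moulines--Rosenthal bound (\ref{doucmoulines}) of Proposition \ref{A} with the ergodic dichotomy of Lemma \ref{lemmaconv}. Fix any $\rho\in(0,1)$ and apply Proposition \ref{A} to obtain a random $(1,\ve)$-coupling set together with a random variable $B : \Theta \longrightarrow [1,\infty)$ satisfying $\log B \in {\rm L}^{1}(\Theta,\mathcal B,\mathcal W)$. Since $\ve$ takes values in $(0,1/2]$, the function $F=1-\ve$ satisfies $\mathcal W(F<1)=1$, while $B$ satisfies $\mathcal W(B\geq 1)=1$. Hence, given $\eta\in(0,|\log\rho|)$, Lemma \ref{lemmaconv}(2) applied to $B$ produces an integer $L\geq 2$ with
\[
\limsup_{n\to\infty} e^{-\eta n}\, B_{[n/L],n}^{\pm}(\theta)=0 \quad \mathcal W\text{-a.s.,}
\]
and Lemma \ref{lemmaconv}(1) applied to $1-\ve$ with this $L$ yields $\lambda_1>0$ such that
\[
\limsup_{n\to\infty} e^{\lambda_1 n}\, (1-\ve)_{[n/L],n}^{\pm}(\theta)=0 \quad \mathcal W\text{-a.s.}
\]
The crucial feature is that both ergodic rates are extracted with the \emph{same} $L$: one picks $L$ first from the unbounded factor $B$, then $\lambda_1$ accommodates this $L$.

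I then invoke (\ref{doucmoulines}) with $n=[t]$, $j=[n/L]+1$, $\nu_1=\nu_t^{+}(\theta)$ and $\nu_2=\nu_t^{-}(\theta)$; for $L\geq 2$ and $n$ large, $j\leq n$, so the indicators in (\ref{doucmoulines}) are $1$. By the subexponential hypothesis (\ref{hyp}), for any $\epsilon>0$ we have $\|\nu_t^{+}(\theta)\|_{U_\alpha}\|\nu_t^{-}(\theta)\|_{U_\alpha}\leq e^{2\epsilon n}$ eventually $\mathcal W$-a.s. For the tail series, since $\log B\in {\rm L}^{1}$ and $T$ is ergodic (Proposition \ref{ergodicity}), Birkhoff's theorem applied to $\log B$ gives $\log B(T^n\theta)/n\to 0$ $\mathcal W$-a.s., whence $B(T^n\theta)\leq e^{\epsilon n}$ eventually, and a geometric sum (using $\epsilon<|\log\rho|$) yields $\sum_{k=0}^{n-1}\rho^k B(T^{n-k-1}\theta)\leq C(\theta)\,e^{\epsilon n}$ with $C(\theta)<\infty$ $\mathcal W$-a.s. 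Combining these estimates with the two ergodic bounds, the three summands on the right-hand side of (\ref{doucmoulines}) are dominated respectively by
\[
2\, e^{-(|\log\rho|+\lambda_1-2\epsilon)n},\qquad 2\, e^{-(|\log\rho|-\eta-2\epsilon)n},\qquad 2\,C(\theta)\,e^{-(\lambda_1-\epsilon)n},
\]
eventually $\mathcal W$-a.s. Taking $\epsilon$ small enough to render all three exponents strictly negative, any positive $\lambda$ below their minimum gives (\ref{conver0}).

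The convergence (\ref{conver1}) is obtained by the identical argument after substituting $\theta$ with $T^{-n}\theta$: by the very definition (\ref{ergo_product}) one has $F_{j,n}^{+}(T^{-n}\theta)=F_{j,n}^{-}(\theta)$, so $(1-\ve)^{-}$ and $B^{-}$ replace their $+$-counterparts, already covered by Lemma \ref{lemmaconv}, and the subexponential estimate on $\sum_k \rho^k B(T^{-k-1}\theta)$ follows from Birkhoff applied to the (still ergodic) transformation $T^{-1}$. The main obstacle throughout is precisely the simultaneous exponential control of the factors $(1-\ve)^{\pm}_{[n/L],n}$ and $B^{\pm}_{[n/L],n}$; the dichotomy of Lemma \ref{lemmaconv}, together with the freedom to pick $\rho$ arbitrarily small, is what makes the competition between $\rho^n$ and the mild growth $e^{\eta n}$ of $B^{\pm}_{[n/L],n}$ favorable and produces a positive $\lambda$ that depends only on the invariants of Proposition \ref{A}, not on $\{\nu_t^{\pm}\}$.
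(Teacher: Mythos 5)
Your argument is correct and follows essentially the same route as the paper: apply the Douc--Moulines bound of Proposition \ref{A} with $n=[t]$ and $j\approx[n/L]$, control $(1-\ve)^{\pm}_{[n/L],n}$ and $B^{\pm}_{[n/L],n}$ via the two cases of Lemma \ref{lemmaconv} (choosing $L$ from the $B$-part first, then the rate for $1-\ve$), handle the tail sum $\sum_k\rho^k B(\cdot)$ by the ergodic theorem, and absorb $\|\nu_t^{\pm}\|_{U_\alpha}$ using hypothesis (\ref{hyp}). Your write-up is in fact more explicit than the paper's about the choice of $\eta<|\log\rho|$ and the independence of $\lambda$ from the families $\{\nu_t^{\pm}\}$.
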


\begin{proof} 
We prove only (\ref{conver1}) since the proof of (\ref{conver0}) follows the same lines and employs the same arguments. Let $0<\rho<1$ be and, following Proposition \ref{A}, write that, for all $\theta\in\Theta_\gamma$, $t\geq 0$ and $j\in\{0,\cdots,[t]+1\}$,
\begin{multline}\label{doucmoulines2}
\| \nu_t^{+} P_{[t]}(T^{-[t]}\theta) - \nu_t^{-} P_{[t]}(T^{-[t]}\theta)\|_{U_\alpha} \\
\leq   2\rho^{[t]} \, [(1-\ve)_{j,[t]}^-(\theta)\mathds 1_{j\leq [t]} + B_{j-1,[t]}^-(\theta)] \, \vert\vert \nu_t^{+}\vert\vert_{U_\alpha} \vert\vert \nu_t^{-}\vert\vert_{U_\alpha}\\
+2(1-\ve)_{j,[t]}^-(\theta) \mathds 1_{j\leq [t]}\,\sum_{k=0}^{[t]-1} \rho^k B(T^{-k-1}\theta).
\end{multline}
Since $\log B\in{\rm L}^1(\Theta,\mathcal B,\mathcal W)$, the ergodic theorem allows us to see that, for all $\eta>0$,
\begin{equation*}
\lim_{k\to\infty}\frac{\log[B(T^{-k+1}\theta)]}{k}=0
\;\;\;\mbox{and}\;\;\; \limsup_{n\to\infty}\,{e^{-\eta n}} \, {\sum_{k=0}^{n-1} \rho^k B(T^{-k+1}\theta)}=0\quad\mathcal W\mbox{-a.s.}
\end{equation*}
Besides, one can see by using Lemma \ref{lemmaconv} that there exist $L\geq 1$ and $\ell>0$ such that 
\begin{equation*}
\lim_{n \to\infty}\,e^{-\eta n}\,B_{\left[\frac{n}{L}\right],n}^{-}(\theta) =0\quad\mbox{and}\quad \lim_{n \to\infty}\,e^{\ell n}\,(1-\ve)_{\left[\frac{n}{L}\right],n}^{-}(\theta) =0.
\end{equation*}
Therefore, we deduce from  (\ref{doucmoulines2}) the exponential convergence (\ref{conver1}).
\end{proof}

\section{Proof of Theorem \ref{invariant_intro}}
\setcounter{equation}{0}
\label{InvaSection}

Theorem \ref{invariant_intro} will be a consequence of Propositions \ref{weakergo} and \ref{annealconv}.  In the sequel, we introduce, for any operator $P$ acting on $\mathcal M_{F}$, $F\in \{U_\alpha,V_{\alpha}\}$, the subordinated norm 
\begin{equation*}
\|P\|_{F}:=\sup \{\|\mu P\|_{F} : \mu\in\mathcal M_{F},\, \|\mu\|_{F}\leq 1\}.
\end{equation*}

\subsection{Exponential weak ergodicity and quasi-invariant measure}

\begin{prop}\label{weakergo}
Assume that $r=0$. For all $\alpha\in(0,1)$ there exists $\lambda>0$ such that, for all $\nu_1,\nu_2\in \mathcal M_{1,U_\alpha}$,
\begin{equation}\label{conv0}
\limsup_{t\to\infty}\frac{\log\left(\|\nu_1 P_{t}(\theta) - \nu_2 P_{t}(\theta) \|_{U_\alpha}\right)}{t}\leq -\lambda\quad\mathcal W\mbox{-a.s.}
\end{equation}
Furthermore, there exists a unique (up to a $\mathcal W$-null set) random probability measure $\mu$ over $(\Theta,\mathcal B,\mathcal W)$ on $\mathbb R$ such that, for all $\alpha\in(0,1)$ there exists $\lambda>0$ such that, for all $\nu\in\mathcal M_{1,U_\alpha}$,
\begin{equation}\label{convinv}
\limsup_{t\to\infty} \frac{\log\big(\|\nu P_t(T_{-t}\theta)  -\mu_\theta \|_{U_\alpha}\big)}{t}\leq -\lambda \quad \mathcal W\mbox{-a.s.}
\end{equation}
Moreover, for all $t\geq 0$,
\begin{equation}\label{invconv}
\mu_\theta\in\mathcal M_{1,U_\alpha}\quad\mbox{and}\quad 
\mu_\theta P_t(\theta) = \mu_{T_t \theta}\quad \mathcal W\mbox{-a.s.}
\end{equation}
\end{prop}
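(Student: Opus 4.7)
The plan is to prove Proposition 7.7 in three logical stages: first (7.40), then the construction of $\mu$ with its invariance, and finally the backward convergence (7.44) together with uniqueness. Throughout I will systematically reduce to Proposition 7.6 (whose two conclusions precisely correspond to (7.40) and (7.44)), combined with the cocycle property (2.17) specialized to $r=0$ and the Foster-Lyapunov estimate of Proposition 7.3.

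For (7.40), fix $\nu_1,\nu_2\in\mathcal M_{1,U_\alpha}$ and apply the convergence (7.40) of Proposition 7.6 with the constant families $\nu_t^+\equiv\nu_1$, $\nu_t^-\equiv\nu_2$; condition (7.39) holds trivially, and we obtain $\limsup_t \log(\|\nu_1 P_{[t]}(\theta)-\nu_2 P_{[t]}(\theta)\|_{U_\alpha})/t\leq -\lambda$ a.s. To promote $[t]$ to $t$, the cocycle yields
\begin{equation*}
\nu_1 P_t(\theta)-\nu_2 P_t(\theta)=\bigl[\nu_1 P_{[t]}(\theta)-\nu_2 P_{[t]}(\theta)\bigr]\,P_{t-[t]}(T_{[t]}\theta),
\end{equation*}
and Proposition 7.3 (with $\tau=T=1$) gives $P_s(\omega)U_\alpha\leq C(\omega)\,U_\alpha$ for $s\in[0,1]$ with $C(\omega)\leq \eta+\kappa+1+B_\omega$, whence the operator $U_\alpha$-norm of $P_{t-[t]}(T_{[t]}\theta)$ is bounded by $C(T_{[t]}\theta)$. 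Since $\log B\in L^1(\mathcal W)$ (a consequence of $\mathds E_{\mathcal W}[\exp(\alpha H_\gamma^2)]<\infty$ in Proposition 7.1 and the bound $B_\theta\leq k\exp(c H_\gamma^p(\theta))$), Birkhoff's ergodic theorem applied to the ergodic system $(\Theta,\mathcal B,\mathcal W,(T_t))$ of Proposition 7.5 gives $\log C(T_{[t]}\theta)/t\to 0$ a.s., which absorbs harmlessly into the exponential rate.

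To construct $\mu$, fix $\nu_0\in\mathcal M_{1,U_\alpha}$ (e.g.\ $\nu_0=\delta_0$) and define $\mu_\theta^{(n)}:=\nu_0 P_n(T_{-n}\theta)$. The cocycle identity $P_{n+1}(T_{-(n+1)}\theta)=P_1(T_{-(n+1)}\theta)\,P_n(T_{-n}\theta)$ gives the telescoping difference
\begin{equation*}
\mu_\theta^{(n+1)}-\mu_\theta^{(n)}=\bigl[\nu_0 P_1(T_{-(n+1)}\theta)-\nu_0\bigr]\,P_n(T_{-n}\theta).
\end{equation*}
Proposition 7.6's second convergence (7.41), applied with $\nu_n^+:=\nu_0 P_1(T_{-(n+1)}\theta)$ and $\nu_n^-:=\nu_0$, produces exponential decay in $n$ provided (7.39) holds; but $\|\nu_n^+\|_{U_\alpha}=\nu_0 P_1(T_{-(n+1)}\theta)U_\alpha\leq C\,\nu_0(U_\alpha)+B_{T_{-(n+1)}\theta}$ by Proposition 7.3, and the Birkhoff argument above handles the subexponential growth. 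Summing a geometric series shows $(\mu_\theta^{(n)})_n$ is Cauchy in the Banach space $\mathcal M_{U_\alpha}$, hence converges to some $\mu_\theta\in\mathcal M_{U_\alpha}$. Since $\|\cdot\|_{U_\alpha}$ dominates total variation and each $\mu_\theta^{(n)}$ is a probability measure, $\mu_\theta\in\mathcal M_1$; the uniform bound $\sup_n\mu_\theta^{(n)}(U_\alpha)<\infty$ inherited from iterated Foster-Lyapunov passes to the limit so that $\mu_\theta\in\mathcal M_{1,U_\alpha}$ a.s., and measurability of $\theta\mapsto\mu_\theta$ comes from that of the kernels. For the invariance, the cocycle yields
\begin{equation*}
\mu_\theta P_t(\theta)=\lim_{s\to\infty}\nu_0 P_s(T_{-s}\theta)P_t(\theta)=\lim_{s\to\infty}\nu_0 P_{s+t}(T_{-s}\theta),
\end{equation*}
and rewriting $T_{-s}\theta=T_{-(s+t)}(T_t\theta)$ identifies this limit with $\mu_{T_t\theta}$.

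The backward convergence (7.44) for arbitrary $\nu\in\mathcal M_{1,U_\alpha}$ follows by writing $\nu P_t(T_{-t}\theta)-\mu_\theta^{(t)}=[\nu-\nu_0]P_t(T_{-t}\theta)$ and invoking the same reduction to Proposition 7.6's (7.41), then using the already-proved convergence $\mu_\theta^{(t)}\to\mu_\theta$. Uniqueness proceeds from invariance: if $\mu'$ is another candidate, then $\mu_\theta-\mu'_\theta=[\mu_{T_{-t}\theta}-\mu'_{T_{-t}\theta}]\,P_t(T_{-t}\theta)$, so (7.41) applied with $\nu_t^\pm=\mu_{T_{-t}\theta},\mu'_{T_{-t}\theta}$ forces $\mu_\theta=\mu'_\theta$ a.s.\ -- \emph{provided} the subexponential growth $\log\mu_{T_{-t}\theta}(U_\alpha)/t\to 0$ a.s.\ holds. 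This is the main obstacle and the one point where some care is required: using invariance itself, $\mu_{T_t\theta}(U_\alpha)=\mu_\theta(P_t(\theta)U_\alpha)\leq(\eta+1)\mu_\theta(U_\alpha)+B_\theta$ yields an additive cocycle bound on $g(\theta):=\log(1+\mu_\theta(U_\alpha))$, from which $g\in L^1(\mathcal W)$ follows by iterating the drift inequality at unit times and using $\log B\in L^1(\mathcal W)$; Birkhoff then delivers the required subexponential growth. Every other step of the proof is essentially a careful bookkeeping of which auxiliary random variables live in which integrability class so that Proposition 7.6's hypothesis (7.39) is genuinely verified.
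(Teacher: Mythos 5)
Your proof is correct and follows essentially the same route as the paper: reduction of the continuous-time statements to the discrete-time exponential convergence of Proposition \ref{crucial} via the cocycle property and the Foster--Lyapunov bound of Proposition \ref{lyapunovUalpha} (with Birkhoff's theorem absorbing the subexponential factors), a telescoping/Cauchy argument in the Banach space $\mathcal M_{U_\alpha}$ for the backward iterates $\nu_0 P_n(T_{-n}\theta)$ to construct $\mu$, and the passage of the cocycle identity through the limit for the equivariance (\ref{invconv}). The only divergence is your uniqueness paragraph: the uniqueness asserted in the proposition is relative to property (\ref{convinv}), hence is immediate from uniqueness of a.s.\ limits in $\mathcal M_{U_\alpha}$, so the step you single out as ``the main obstacle'' (showing $\log(1+\mu'_\theta(U_\alpha))\in{\rm L}^1$ for a competing equivariant measure $\mu'$) is not needed; moreover, as written it is under-justified, since the inequality $g(T\theta)\leq g(\theta)+\log(1+B_\theta)$ alone does not yield integrability --- one would need to exploit stationarity of $\theta\mapsto\mu'_{T_n\theta}(U_\alpha)$ together with the contraction $\rho<1$ (or, more simply, apply Proposition \ref{crucial} forward with the constant-in-$t$ random families $\nu_t^\pm=\mu_\theta,\mu'_\theta$, for which hypothesis (\ref{hyp}) is trivial, and conclude by $T$-invariance of $\mathcal W$). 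Since this concerns a statement stronger than the one claimed, it does not affect the validity of your proof of the proposition itself.
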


\begin{proof} 
By using relation (\ref{cocycle_intro}) we can write 
$P_t(\theta)=P_{[t]}(\theta)P_{\{t\}}(T^{[t]}\theta)$ and we get 
\begin{equation}\label{conv2}
\left\|\nu_1 P_{t}(\theta) - \nu_2 P_{t}(\theta) \right\|_{U_\alpha} \leq   
\|\nu_1 P_{[t]}(\theta) - \nu_2 P_{[t]}(\theta)\|_{U_\alpha} \|P_{\{t\}}(T^{[t]}\theta)\|_{U_\alpha}.
\end{equation}
Moreover, by using Propositions \ref{lyapunovUalpha} and \ref{uniform_random_approximation} and the ergodic theorem,  we obtain 
\begin{equation}\label{conv3}
\lim_{n\to\infty} \frac{\log (\sup_{0\leq u\leq 1}\|P_u(T^n\theta)\|_{U_\alpha})}{n}=0\quad\mathcal W\mbox{-a.s.}
\end{equation}
Besides, a direct application of Proposition \ref{crucial} gives that there exists $\lambda>0$, independent of $\nu_1$ and $\nu_2$, such that 
\begin{equation}\label{conv1}
\limsup_{n\to\infty}\frac{\log\left(\|\nu_1 P_{n}(\theta) - \nu_2 P_{n}(\theta) \|_{U_\alpha}\right)}{n}\leq -\lambda\quad\mathcal W\mbox{-a.s.}	
\end{equation}
We deduce inequality (\ref{conv0}) from (\ref{conv1}), (\ref{conv3}) and (\ref{conv2}). Furthermore, one can see by using again Propositions \ref{crucial}, \ref{lyapunovUalpha} and \ref{uniform_random_approximation} and similar arguments that  
\begin{equation*}
\sum_{n=0}^\infty \|\nu P_{n+1}(T^{-n-1}\theta)-\nu P_{n}(T^{-n}\theta)\|_{U_\alpha}<\infty\quad \mathcal W\mbox{-a.s.}
\end{equation*}
We obtain that, for $\mathcal W$-almost all $\theta\in\Theta$, $\{\nu P_{n}(T^{-n}\theta) : n\geq 0\}$ is a Cauchy sequence in the separable Banach space $\mathcal M_{U_\alpha}$. We get that there exist $\lambda>0$ and a random probability measure $\mu_\theta\in\mathcal M_{U_\alpha}$ such that, for all $\nu\in \mathcal M_{1,U_\alpha}$, 
\begin{equation}\label{conv_discret}
\limsup_{n\to\infty} \frac{\log(\|\nu P_n(T^{-n}\theta)  -\mu_\theta \|_{U_\alpha})}{n}\leq -\lambda \quad \mathcal W\mbox{-a.s.}
\end{equation}
We deduce (\ref{convinv}) from (\ref{conv_discret}) in the same way as we obtain (\ref{conv0}) from (\ref{conv1}). Finally, (\ref{invconv}) is a consequence of (\ref{convinv}) and the cocycle property since
\begin{equation*}
\mu_\theta P_t(\theta)\overset{\mathcal M_{U_\alpha}}{=}\lim_{s\to\infty} \nu P_s(T_{-s}\theta) P_t(\theta) \overset{\mathcal M_{U_\alpha}}{=} \lim_{s\to\infty} \nu P_{t+s}(T_{-(s+t)}T_t\theta) \overset{\mathcal M_{U_\alpha}}{=} \mu_{T_t\theta}\quad\mathcal W\mbox{-a.s.}
\end{equation*}
\end{proof}


\subsection{Annealed convergences}

\begin{prop}\label{annealconv}
 For all $\alpha\in(0,1)$ and $\hat \nu\in\mathcal M_{1,V_\alpha}$,
\begin{equation}\label{annealconveq}
\hat \mu\in\mathcal M_{1,V_\alpha}\quad\mbox{and}\quad \lim_{t\to\infty} \| \hat \nu \widehat P_t - \hat \mu \|_{V_\alpha}=0.
\end{equation}
\end{prop}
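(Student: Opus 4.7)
The plan proceeds in two stages: first show that $\hat\mu(V_\alpha)$ and in fact every moment $\mathds E_{\scriptscriptstyle\mathcal W}[\mu_\theta(V_\alpha)^q]$ is finite, then deduce the annealed convergence by a truncation argument powered by the quenched convergence of Proposition \ref{weakergo} and by Vitali's theorem. For the first stage, fix $\gamma\in(\alpha/2,1/2)$ and choose $\eta,\kappa>0$ in Proposition \ref{lyapunovValpha} with $\rho:=\eta+\kappa<1$; this yields on $\Theta_\gamma$ a time $\tau>0$ and the drift $P_\tau(\theta)V_\alpha\leq \rho V_\alpha+B_\theta$. Iterating via the cocycle property (\ref{cocycle_intro}) (valid since $r=0$) gives
\begin{equation*}
P_{n\tau}(\theta)V_\alpha(0)\;\leq\;\rho^n+\sum_{k=0}^{n-1}\rho^{k}\,B(T_{(n-k-1)\tau}\theta).
\end{equation*}
The crucial moment input is (\ref{lyapunovValphaB}), which gives $B_\theta\leq k\exp(cH_\gamma^p)$ with $p<2$, so the Fernique bound (\ref{moment_exp}) yields $\mathds E_{\scriptscriptstyle\mathcal W}[B_\theta^q]<\infty$ for every $q\geq 1$. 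Minkowski's inequality together with the $T$-invariance of $\mathcal W$ then produces $\sup_n\mathds E_{\scriptscriptstyle\mathcal W}\bigl[(P_{n\tau}(\theta)V_\alpha(0))^q\bigr]<\infty$. Since Proposition \ref{weakergo} applied to $\nu=\delta_0\in\mathcal M_{1,U_\alpha}$ gives $\delta_0 P_n(T_{-n}\theta)\rightarrow\mu_\theta$ weakly $\mathcal W$-a.s., Fatou's lemma concludes the first stage.

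For the second stage, combining $T$-invariance of $\mathcal W$ with $\mu_{T_{-t}\theta}P_t(T_{-t}\theta)=\mu_\theta$ from (\ref{invariance_intro}) produces the identity
\begin{equation*}
\hat\nu\widehat P_t-\hat\mu=\int_\Theta\bigl(\hat\nu P_t(T_{-t}\theta)-\mu_\theta\bigr)\,\mathcal W(\dd\theta),
\end{equation*}
whence $\|\hat\nu\widehat P_t-\hat\mu\|_{V_\alpha}\leq \mathds E_{\scriptscriptstyle\mathcal W}\bigl[\|\hat\nu P_t(T_{-t}\theta)-\mu_\theta\|_{V_\alpha}\bigr]$. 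Truncate by $\hat\nu_R:=\hat\nu\mathds 1_{[-R,R]}$, $c_R:=\hat\nu_R(\mathbb R)\uparrow 1$, and $\tilde\nu_R:=c_R^{-1}\hat\nu_R\in\mathcal M_{1,U_\alpha}$ (compactly supported). Decompose
\begin{equation*}
\hat\nu P_t(T_{-t}\theta)-\mu_\theta=(\hat\nu-\hat\nu_R)P_t(T_{-t}\theta)+c_R\bigl(\tilde\nu_R P_t(T_{-t}\theta)-\mu_\theta\bigr)-(1-c_R)\mu_\theta.
\end{equation*}
The first and third terms become uniformly small in $t$ by taking $R$ large: the first is estimated via the drift, $\|(\hat\nu-\hat\nu_R)P_t(T_{-t}\theta)\|_{V_\alpha}\leq \rho(\hat\nu-\hat\nu_R)(V_\alpha)+(1-c_R)B_{T_{-t}\theta}$, whose $\mathcal W$-integral tends to $0$ as $R\rightarrow\infty$; the third contributes $(1-c_R)\hat\mu(V_\alpha)$. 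For the middle term at fixed $R$, Proposition \ref{weakergo} gives $\|\tilde\nu_R P_t(T_{-t}\theta)-\mu_\theta\|_{U_\alpha}\rightarrow 0$ $\mathcal W$-a.s.\ exponentially, transferred to $\|\cdot\|_{V_\alpha}$ by the elementary comparison $V_\alpha\leq C_\alpha U_\alpha$ (valid for $\alpha\in(0,1)$). Vitali's theorem upgrades this a.s.\ convergence to $L^1(\mathcal W)$ convergence thanks to the uniform-in-$t$ bound
\begin{equation*}
\|\tilde\nu_R P_t(T_{-t}\theta)-\mu_\theta\|_{V_\alpha}\leq \rho\tilde\nu_R(V_\alpha)+B_{T_{-t}\theta}+\mu_\theta(V_\alpha),
\end{equation*}
whose $p$-th power ($p>1$) has $\mathcal W$-expectation uniformly bounded by the moment estimates of the first stage. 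Letting $t\rightarrow\infty$ and then $R\rightarrow\infty$ closes the argument.

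The principal obstacle is that $\hat\nu\in\mathcal M_{1,V_\alpha}$ need not belong to any $\mathcal M_{1,U_\alpha}$, so Proposition \ref{weakergo} cannot be applied directly to $\hat\nu$; the compactly supported truncation bypasses this but forces an equi-integrability step whose success rests on having arbitrary $\mathcal W$-moments of $B_\theta$ and of $\mu_\theta(V_\alpha)$. Both rely crucially on the exponent $p<2$ in (\ref{lyapunovValphaB}), so the hypothesis $\gamma\in(\alpha/2,1/2)$ in Proposition \ref{lyapunovValpha} does essential work here: with merely $p=2$ one would obtain no moments of $B_\theta$ of order $>1$ from (\ref{moment_exp}) and the Vitali step would collapse.
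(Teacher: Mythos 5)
Your argument is correct in substance, but it takes a genuinely different route from the paper at the decisive step. The paper reduces, as you do, to bounding $\mathds E_{\scriptscriptstyle\mathcal W}\bigl[\|\hat\nu P_t(T^{-[t]}\theta)-\mu_{T_{\{t\}}\theta}\|_{V_\alpha}\bigr]$ via $(T_t)$-invariance and quasi-invariance, but it does not truncate: it reruns the quenched machinery itself in the $V_\alpha$-norm (the coupling bound of Proposition \ref{A} fed with the Foster--Lyapunov condition of Proposition \ref{lyapunovValpha}), so that $\|\hat\nu P_{[t]}(T^{-[t]}\theta)-\mu_\theta\|_{V_\alpha}\to 0$ $\mathcal W$-a.s.\ holds directly for an arbitrary $\hat\nu\in\mathcal M_{1,V_\alpha}$, and it concludes by dominated convergence with an explicit $t$-independent integrable majorant of the form $(\rho+1)\bigl(\rho\|\hat\nu\|_{V_\alpha}+\sum_{k\geq 0}\rho^k B(T^{k}\theta)\bigr)+B_\theta+\mu_\theta(V_\alpha)$; for the first half it only establishes $\hat\mu(V_\alpha)<\infty$, from $\mu_{T\theta}(V_\alpha)\leq\rho\,\mu_\theta(V_\alpha)+B_\theta$ and $T$-invariance. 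You instead stay entirely within the $U_\alpha$-theory of Proposition \ref{weakergo}, reach general $\hat\nu$ by compactly supported truncation together with the comparison $V_\alpha\leq C_\alpha U_\alpha$, and replace domination by Vitali, which is why you develop all $\mathcal W$-moments of $B_\theta$ and of $\mu_\theta(V_\alpha)$ (your stage one, via the iterated drift and Fatou, is in fact a more careful justification of $\hat\mu\in\mathcal M_{1,V_\alpha}$ than the paper's one-line expectation, which tacitly needs finiteness before subtracting). The paper's route buys economy once the $V_\alpha$-coupling estimate is granted (no truncation, first moments of $B$ suffice); your route buys that nothing beyond the already-proved $U_\alpha$-quenched convergence is invoked, at the price of the equi-integrability bookkeeping.

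Two repairs. First, your two displayed bounds featuring a single $B_{T_{-t}\theta}$ are not literally valid for $t>1$: Proposition \ref{lyapunovValpha} gives the drift only on a bounded time window, so the time-$t$ kernel must be controlled by iterating the unit-time drift through the cocycle property (\ref{cocycle_intro}), exactly as in your stage one; the correct bounds carry a geometric sum $\sum_{k\geq 0}\rho^k B(T_{\cdot}\theta)$ along the orbit in place of $B_{T_{-t}\theta}$. Since by $T$-invariance this sum has $\mathcal W$-expectation at most $\mathds E_{\scriptscriptstyle\mathcal W}[B]/(1-\rho)$ and all its higher moments are likewise finite, your conclusions (smallness uniformly in $t$ as $R\to\infty$, and the uniform $L^p$ bound for Vitali) survive unchanged, so this is a slip rather than a gap. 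Second, your closing claim is overstated in one respect: with the $t$-independent majorant above one may conclude by dominated convergence, so moments of $B_\theta$ of order $>1$ are not essential; what $p<2$ in (\ref{lyapunovValphaB}) really secures is integrability of $B_\theta$ at all, since for $p=2$ the constant $c$ could exceed the Fernique exponent in (\ref{moment_exp}).
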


\begin{proof} 
Let $0<\rho<1$ be and apply Proposition  \ref{lyapunovValpha} to see that, for all $0\leq u\leq 1$,
\begin{equation}\label{majo2}
P_{u}(\theta)V_\alpha \leq (\rho+1) V_\alpha + B_\theta\quad\mbox{and}\quad P_\theta V_\alpha \leq \rho V_\alpha + B_\theta\quad\mathcal W\mbox{-a.s.}
\end{equation}
We get from the latter inequality and (\ref{invconv}) that $\mu_{T\theta}(V_\alpha) \leq  \rho  \mu_\theta(V_\alpha) + B_\theta$ $\mathcal W$-a.s. and, by taking the expectation of the last inequality, we obtain the left hand side of (\ref{annealconveq}). Besides, since the Wiener measure is $(T_{t})$-invariant, we can see that  
\begin{multline}\label{cvd}
\|\hat \nu \widehat P_t - \hat \mu \|_{V_\alpha}
=\| \mathds E_{\scriptscriptstyle \mathcal W} [\hat \nu \widehat P_t(T^{-t}\theta) - \hat \mu_{T_{\{t\}\theta}}]\|_{V_\alpha}\\
\leq  \mathds E_{\scriptscriptstyle \mathcal W}[\|\hat \nu P_t(T^{-[t]}\theta)-\mu_{T_{\{t\}}\theta}\|_{V_\alpha} ].
\end{multline}
Moreover, the relation (\ref{invconv}) and the cocycle property (\ref{cocycle_intro}) allow us to write
\begin{equation*}
P_{t}(T^{-[t]}\theta)= P_{[t]}(T^{-[t]}\theta) P_{\{t\}}(\theta)\quad\mbox{and}\quad \mu_\theta 
P_{\{t\}}(\theta)=\mu_{T_{\{t\}\theta}}.
\end{equation*}
Then similar arguments as for the proofs of (\ref{convinv}) and (\ref{conv0}) hold and we get that
\begin{multline}\label{cvd2}
\lim_{t\to\infty}\|\hat \nu P_t(T^{-[t]}\theta)-\mu_{T_{\{t\}}\theta}\|_{V_\alpha}\\
\leq \lim_{t\to\infty}\|\hat \nu P_{[t]}(T^{-[t]}\theta)-\mu_\theta\|_{V_\alpha}  \|P_{\{t\}}(\theta)\|_{V_{\alpha}} 
=0\quad\mathcal W\mbox{-a.s.}
\end{multline}
Furthermore, by using (\ref{majo2}) and the cocycle property, it is not difficult to see that 
\begin{multline*}
\|\nu P_t(T^{-[t]}\theta)\|_{V_\alpha} \leq  (\rho+1)\Big( \rho \|\nu\|_{V_\alpha} + \sum_{k=0}^\infty \rho^k B(T^{k}\theta)\Big) +  B_\theta\\ 
\mbox{and}\quad 
\|\mu_{T_{\{t\}}\theta}\|_{V_\alpha}  \leq  (\rho+1)\|\mu_\theta\| + B_\theta.
\end{multline*}
Noting that the two previous bounds belong to ${\rm L}^1(\Theta,\mathcal B,\mathcal W)$ (see Proposition \ref{lyapunovValpha}) and are independent of $t\geq 0$, the dominate convergence theorem applies and we deduce from (\ref{cvd2}) and (\ref{cvd}) the right hand side of (\ref{annealconveq}).
\end{proof}


\section{Proof of Theorem \ref{CG}}
\setcounter{equation}{0}
\label{CGsection}

Recall that under $\mathds P_z(\theta)$  (see Proposition \ref{equivalent_mp}) $\{S_\theta(t,X_t) : t\geq 0 \}$ is a solution of the SDE (\ref{sde}), with $a=1$. Moreover, since $r>0$, we can see by using (\ref{cocycle_coeff}) that 
\begin{multline*}\label{conver3}
\lim_{t\to\infty} S_{\theta}(t,x)=S(x):=\int_{0}^{x} e^{\frac{y^{2}}{2}}\,\dd y,\quad \lim_{t\to\infty} H_{\theta}(t,x)=S^{-1}(x),\\
\lim_{t\to \infty} \sigma_{\theta}(t,x)=S^{\prime}\circ S^{-1}(x)\quad\mbox{and}\quad \lim_{t\to \infty} d_{\theta}(t,x)=0,
\end{multline*}
uniformly on compact sets. Following \cite[Lemma 4.5]{MY} and denoting by $\Gamma$ the standard normal distribution, $\{S_{\theta}(t,X_{t}) : t\geq 0\}$ is asymptotically time-homogeneous and $S_{\ast}\Gamma$-ergodic.  According to the cited Lemma, if in addition $\{S_{\theta}(t,X_{t}) : t \geq 0\}$ is bounded in probability, it converges in distribution towards $S_{\ast}\Gamma$:
\begin{equation*}\label{resultabove}
\big(\forall \ve>0,\;\exists R>0,\;\sup_{t\geq 0}\mathds P_{z}(\theta)(|S_{\theta}(t,X_{t})|\geq R)\leq \ve\big)\Longrightarrow \lim_{t\to\infty} S_{\theta}(t,X_{t})\overset{(\dd)}{=} S_{\ast}\Gamma.	
\end{equation*}
We shall prove that $\{X_t : t\geq 0\}$ is bounded in probability, which shall imply the boundedness in probability of  $\{S_{\theta}(t,X_t) : t\geq 0\}$. By using Proposition \ref{lyapunovUalpha}, we can find $0<\rho<1$, $L>0$, $B:\Theta\longrightarrow [1,\infty)$ and $k,c,p>0$ such that, for all  $0\leq u\leq 1$,
\begin{equation*}\label{majo}
P_u(\theta) U_\alpha \leq L U_\alpha + B_\theta,\;\;\; P_\theta U_\alpha \leq \rho U_\alpha + B_\theta\;\;\;\mbox{and}\;\;\; B_\theta\leq k\, \exp\left[c H_\gamma^p(\theta)\right]\quad \mathcal W\mbox{-a.s.}
\end{equation*}
Then relations (\ref{presque_cocycle_intro}) and the ergodic theorem allow us to write that, for all $t\geq 0$,
\begin{multline*}
\sup_{t\geq 0}   \|P_\theta(t,z,\dd x)\|_{U_\alpha} \leq \sup_{t\geq 0}\; L\Big(\rho^
{[t]} U_\alpha(z) + k\sum_{m=0}^{[t]-1} \rho^{[t]-m} \exp{\left[c e^{-r p m} H_\gamma^p(T^{m}\theta)\right]}\Big) + B_\theta\\
\leq L\Big(\rho U_\alpha(z) + \frac{k}{1-\rho} \exp{\big[\sup_{m\geq 0}\big(c e^{-r p m} H_\gamma^p(T^{m}\theta)\big)\big]}\Big) + B_\theta <\infty\quad\mathcal W\mbox{-a.s.}
\end{multline*}
Thereafter, the Markov inequality implies that 
\begin{equation*}
\sup_{t\geq 0}\mathds P_{z}(\theta)(|X_{t}|\geq R)\leq \frac{\sup_{t\geq 0}  \|P_\theta(t,z,\dd x)\|_{U_\alpha}}{U_{\alpha}(R)}\quad \mathcal W\mbox{-a.s.}
\end{equation*}
Therefore, we get that $\{X_t : t\geq 0\}$ is bounded in probability and since
\begin{equation*}
\lim_{|x|\to\infty}\inf_{t\geq 0} S_{\theta}(t,x)=\infty,	
\end{equation*}
we obtain also the boundedness in probability of $\{S_{\theta}(t,X_t) : t\geq 0\}$. We deduce that \cite[Lemma 4.5]{MY} applies and this completes the proof.\hfill$\Box$\\

\noindent
{\bf Acknoledgements\;} The author is grateful to the Referee for careful reading and valuable comments and remarks which have significantly improve the manuscript.

\bibliographystyle{spmpsci}
\bibliography{bibliographie-re.bib}


\end{document}